\def\@maketitle{%
  \newpage
  \null
  \vskip -1em%
  \begin{center}%
    \let \footnote \thanks
    \vskip -1em%
    {\Large \textsc{\@title} \par}%
    \vskip 1em%
    {\large
      \lineskip .5em%
      \begin{tabular}[t]{c}%
        \@author
      \end{tabular}\par}%
    \vskip 1em%
    {\large \@date}%
  \end{center}%
  \par
  \vskip 1.5em}
\newtheorem{theorem}{Theorem}
\newtheorem{lemma}{Lemma}
\newtheorem*{lemma*}{Lemma}
\newtheorem{corollary}{Corollary}
\newtheorem{proposition}{Proposition}
\newtheorem{definition}{Definition}[section]
\newtheorem{assumption}{Assumption}
\newcounter{example}
\newenvironment{example}[1][]{
  \refstepcounter{example}
  \ifthenelse{\isempty{#1}}{%
    \noindent \textbf{Example \theexample:}\hspace*{.05em}
  }{%
    \noindent \textbf{Example \theexample} ({#1})\textbf{:}\hspace*{.05em}
  }
}{%
  $\Diamond$ \medskip
}
\title{Geometry, Computation, and Optimality in \\ Stochastic Optimization}
\author{Chen Cheng$^1$ ~~~~~~ John C.\ Duchi$^{1,2}$ ~~~~~~ Daniel Levy$^3$
  \\[.25cm]  
  Departments of $^1$Statistics, $^2$Electrical Engineering, and $^3$Computer Science \\
  Stanford University\footnote{
  CC, JCD, and DL partially
  by NSF-CAREER Award 1553086, ONR-YIP N00014-19-1-2288,
  and the Stanford DAWN Project. JCD and CC additionally
  supported by NSF Award IIS-2006777 and
  ONR Award N00014-22-1-2669.  
}}
\date{July 2025}
\newcommand{\norm}[1]{\left\|{#1}\right\|} 
\newcommand{\lone}[1]{\norm{#1}_1} 
\newcommand{\ltwo}[1]{\norm{#1}_2} 
\newcommand{\linf}[1]{\norm{#1}_\infty} 
\newcommand{\dnorm}[1]{\norm{#1}_*} 
\newcommand{\lfro}[1]{\left\|{#1}\right\|_{\rm Fr}} 
\newcommand{\matrixnorm}[1]{\left|\!\left|\!\left|{#1}
  \right|\!\right|\!\right|} 
\newcommand{\normbigg}[1]{\bigg\|{#1}\bigg\|} 
\newcommand{\norms}[1]{\|{#1}\|} 
\newcommand{\ltwos}[1]{\norms{#1}_2} 
\newcommand{\<}{\langle}
\renewcommand{\>}{\rangle}
\newcommand{\wb}[1]{\overline{#1}}
\newcommand{\opnorm}[1]{\matrixnorm{#1}_{\rm op}} 
\newcommand{\floor}[1]{\lfloor{#1}\rfloor}
\newcommand{\denselist}{\itemsep 0pt\partopsep 0pt}
\newcommand{\bitem}{\begin{itemize}\denselist}
\newcommand{\eitem}{\end{itemize}}
\newcommand{\benum}{\begin{enumerate}\denselist}
\newcommand{\eenum}{\end{enumerate}}
\newcommand{\openright}[2]{\left[{#1}, {#2}\right)}
\newcommand{\openleft}[2]{\left({#1}, {#2}\right]}
\newcommand{\thm}{\begin{theorem}}
\newcommand{\lem}{\begin{lemma}}
\newcommand{\pro}{\begin{proposition}}
\newcommand{\dfn}{\begin{definition}}
\newcommand{\rem}{\begin{remark}}
\newcommand{\xam}{\begin{example}}
\newcommand{\cor}{\begin{corollary}}
\newcommand{\ethm}{\end{theorem}}
\newcommand{\elem}{\end{lemma}}
\newcommand{\epro}{\end{proposition}}
\newcommand{\edfn}{\end{definition}} 
\newcommand{\erem}{\bbox\end{remark}}
\newcommand{\exam}{\bbox\end{example}}
\newcommand{\ecor}{\end{corollary}}
\newcommand{\beqn}{\begin{equation}}
\newcommand{\eeqn}{\end{equation}}
\newcommand{\dom}{\mathop{\rm dom}}
\newcommand{\bbox}{\vrule height7pt width4pt depth1pt}
\newcommand{\minimize}{\mathop{\rm minimize}}
\newcommand{\invert}[1]{{#1}^{-1}}
\newcommand{\commentout}[1]{}
\newcommand{\tr}{{\rm tr}}
\newcommand{\R}{\mathbf{R}}  
\newcommand{\N}{\mathbf{N}}
\newcommand{\stepsize}{\alpha}
\newcommand{\sign}{\mathop{\rm sign}}
\def\twofigboxnolabelFive#1#2{%
\begin{minipage}{\textwidth}%
\hbox to 0.5in{}\epsfxsize=0.35\maxfigwidth
\noindent \epsffile{#1}\hfill
\epsfxsize=0.35\maxfigwidth
\epsffile{#2}\hbox to 0.5in{}\\
\end{minipage}
}
\newlength{\maxfigwidth}
\newcommand{\cd}{\stackrel{d}{\to}}
\newcommand{\diam}{\textup{diam}}
\newcommand{\E}{\mathbf{E}}
\renewcommand{\P}{\mathbb{P}}
\newcommand{\var}{\mathop{\rm Var}}
\newcommand{\defeq}{:=}
\newcommand{\normal}{\mathsf{N}}
\newcommand{\half}{\frac{1}{2}}
\newcommand{\zeros}{\mathbf{0}}
\newcommand{\ones}{\mathbf{1}}
\definecolor{inneralgcolor}{rgb}{.95,.98,1}
\definecolor{outeralgcolor}{rgb}{0,0,.3}
\definecolor{innerboxcolor}{rgb}{.9,.95,1}
\definecolor{outerlinecolor}{rgb}{.6,0,.2}
\newtheorem{observation}{Observation}[section]
\newcommand{\ball}[3]{\mathbf{B}_{#1}(#2, #3)}
\newcommand{\dotp}[2]{#1^\top#2}
\newcommand{\inprod}[2]{\left\<{#1}, {#2}\right\>}
\newcommand{\breg}[3]{\mathrm{D}_{#1}(#2, #3)}
\newcommand{\hinge}[1]{\left({#1}\right)_+}
\providecommand{\argmin}{\mathop{\textup{argmin}}}
\providecommand{\argmax}{\mathop{\textup{argmax}}}
\renewcommand{\mc}[1]{\mathcal{#1}}
\newcommand{\mc}[1]{\mathcal{#1}}
\newcommand{\what}[1]{\widehat{#1}}
\newcommand{\simiid}{\stackrel{\textup{iid}}{\sim}}
\newcommand{\tvnorm}[1]{\left\|{#1}\right\|_{\mathsf{tv}}}
\newcommand{\tvnormbig}[1]{\big\|{#1}\big\|_{\mathsf{tv}}}
\newcommand{\dkl}[2]{D_{\textup{kl}}\left({#1} |\!| {#2}\right)}
\newcommand{\dhel}{d_{\textup{hel}}}
\newcommand{\indic}[1]{1\!\left({#1}\right)}
\newcommand{\qhull}{\mathsf{QHull}}
\newcommand{\squaredhull}{\mathsf{SqHull}}
\newcommand{\convexindic}[1]{\mathbf{1}_{#1}} 
\newcommand{\conv}{\textup{Conv}}
\newcommand{\F}[2]{\mathcal{F}^{#1, #2}}
\newcommand{\minimax}{\mathfrak{M}_k}
\newcommand{\minimaxs}{\minimax^{\mathsf{S}}}
\newcommand{\minimaxr}{\minimax^{\mathsf{R}}}
\newcommand{\mi}[2]{\mathsf{I}(#1; #2)}
\newcommand{\diag}{\textup{diag}}
\newcommand{\eucgrad}{Euclidean gradient methods\xspace}
\newcommand{\dscaled}{diagonally scaled gradient methods\xspace}
\providecommand{\stepsize}{\alpha}
\newcommand{\bregman}[3]{\breg{#1}{#2}{#3}}
\newcommand{\regret}{\mathsf{Regret}}
\newcommand{\f}{F}
\newcommand{\ff}{f}
\newcommand{\statrv}{S}
\newcommand{\statval}{s}
\newcommand{\statdomain}{\mc{S}}
\newcommand{\optvar}{x}
\newcommand{\optdomain}{X}
\newcommand{\packval}{v}
\newcommand{\packset}{\mc{V}}
\newcommand{\width}{w}
\newcommand{\nlwidth}{w_{\textup{nl}}}
\newcommand{\lipnorm}[1]{\norm{#1}_{\textup{Lip}}}
\newcommand{\measures}{\mc{P}}
\newcommand{\dopt}{\mathsf{d}_{\textup{opt}}}
\newcommand{\gradomain}{G}
\newcommand{\dham}{\mathsf{d}_{\textup{Ham}}}
\definecolor{darkblue}{rgb}{0,0,.5}
\begin{document}

\maketitle
\ifdefined\moorsubmission
\else
\vspace{-1cm}
\begin{abstract}
%
We study computational and statistical consequences of problem geometry in
stochastic and online optimization. By focusing on constraint set and
gradient geometry, we characterize the problem families for which
stochastic- and adaptive-gradient methods are (minimax) optimal and,
conversely, when nonlinear updates---such as those mirror descent
employs---are necessary for optimal convergence.  When the constraint set
is quadratically convex, diagonally pre-conditioned stochastic gradient
methods are minimax optimal. We provide quantitative converses showing
that the ``distance'' of the underlying constraints from quadratic
convexity determines the sub-optimality of subgradient methods. These
results apply, for example, to any $\ell_p$-ball for $p < 2$, and the
computation/accuracy tradeoffs they demonstrate exhibit a striking analogy
to those in Gaussian sequence models.

\end{abstract}
\fi

\section{Introduction}


The default procedures for solving the stochastic optimization problem
\begin{equation}
  \label{eqn:problem}
  \minimize_{\optvar \in \optdomain}
  ~ \ff_P(\optvar) \defeq \E_P\left[\f(\optvar, \statrv)\right]
  = \int \f(\optvar, \statval) dP(\statval),
\end{equation}
where $\{\f(\cdot, \statval), \statval \in \statdomain\}$ are
convex functions $\f(\cdot,\statval) : \R^n \to \R$,
$P$ is a distribution on $\statdomain$, and $\optdomain \subset \R^n$
is a closed convex set,
are variants of the stochastic subgradient method, where
one iteratively draws $\statrv_k \simiid P$ and updates
\begin{equation}
  \label{eqn:sgm-update}
  \optvar_{k + 1} \defeq \optvar_k - \stepsize_k g_k,
  ~~ \mbox{where} ~~ g_k \in \partial \f(\optvar_k, \statrv_k).
\end{equation}
The simplicity and scalability of this update make stochastic subgradient
methods the \emph{de facto} choice for large-scale
optimization~\cite{RobbinsMo51, NemirovskiJuLaSh09, BottouCuNo18}.  The
geometry of the underlying underlying constraint set $\optdomain$ and
subgradients $\partial \f(\cdot, \statval)$ impact the performance of
algorithms for problem~\eqref{eqn:problem}, and so a question arises: are
such linear updates~\eqref{eqn:sgm-update} enough to obtain (minimax rate)
optimal convergence guarantees for the problem~\eqref{eqn:problem}, or does
does the structure of the problem \emph{necessitate} nonlinearity to achieve
optimization efficiency?  Convergence guarantees for stochastic gradient
methods depend on the $\ell_2$-diameter of $\optdomain$ and $\partial
\f(\cdot, \statval)$, while for non-Euclidean geometries (e.g.\ when
$\optdomain$ is an $\ell_1$- or $\ell_\infty$-ball)
mirror descent, dual averaging and adaptive
gradient methods
provide better
convergence guarantees~\cite{NemirovskiYu83, NemirovskiJuLaSh09, BeckTe03,
  Nesterov09, DuchiHaSi11, CutkoskyOr18}.  We investigate these gaps by
precisely quantifying convergence for different method families,
highlighting a particular way to trade between computational power---which
we treat as whether purely linear operations suffice to optimally solve
problem~\eqref{eqn:problem}, or nonlinear updates are necessary---and
optimization and statistical efficiency.






To set the stage, let us revisit
\citeauthor*{DonohoLiMa90}'s study of optimal estimation
in Gaussian sequence models~\cite{DonohoLiMa90}.
One observes a vector
$\optvar \in \optdomain$ corrupted by Gaussian noise,
\begin{equation}
  \label{eqn:sequence-model-obs}
  y = \optvar + \normal(0, \sigma^2 I),
\end{equation}
and seeks to estimate $\optvar$.
For such problems, one can consider linear estimators---$\what{\optvar} =
AY$ for a $A \in \R^{n\times n}$---or potentially non-linear
estimators
\begin{equation*}
  \what{\optvar} = \Phi(y)
\end{equation*}
where $\Phi : \R^n \to \optdomain$ is otherwise arbitrary. When $\optdomain$
is quadratically convex, meaning the set $\optdomain^2 \defeq
\{(\optvar_j^2)_{j \ge 1} \mid \optvar \in \optdomain\}$ is convex,
\citeauthor{DonohoLiMa90} show there exist minimax rate-optimal linear
estimators; conversely, there are non-quadratically convex $\optdomain$ for
which rate-optimal estimators $\what{\optvar}$ \emph{must} be
nonlinear in $y$.
In particular, as we discuss in Section~\ref{sec:role-quadratic-convexity},
this gap depends on the difference
between the Kolmogorov (linear) $n$-width of $\optdomain$ and
its ``nonlinear'' $n$-width, that is,
\begin{equation}
  \label{eqn:n-widths-intro}
  \width^2(n) \defeq \sup_{v \in \conv(X^2)}
  \sum_{j > n} v_{(j)}
  ~~~ \mbox{versus} ~~~
  \nlwidth^2(n) \defeq \sup_{v \in X^2}
  \sum_{j > n} v_{(j)},
\end{equation}
where $|v_{(1)}| \ge |v_{(2)}| \ge \cdots$ denote the elements of
$v$ sorted by magnitude.
We show how these results follow from convex duality,
and the difference between $\width^2(n)$ and $\nlwidth^2(n)$
allows a quantitative characterization of how far
$\optdomain$ is from being quadratically convex and the impact this distance
has on the (sub)optimality of linear estimators.

Our results show how stochastic and online convex optimization
analogize these sequence models.
To build the analogy, consider dual
averaging~\cite{Nesterov09}, where for a strongly convex $h : \optdomain \to
\R \cup \{+\infty\}$, one iteratively receives $\statrv_k
\in \statdomain$, chooses $g_k \in \partial \f(\optvar_k, \statrv_k)$, and
for a stepsize $\stepsize_k > 0$ updates
\begin{equation}
  \label{eq:da}
  \optvar_{k+1} \defeq \argmin_{\optvar}
  \bigg\{ \sum_{i\leq k}g_i^\top \optvar
  + \frac{1}{\stepsize_k} h(\optvar)\bigg\},
\end{equation}
By taking $h(\optvar) = h_0(\optvar)
+ \convexindic{\optdomain}(\optvar)$, where $h_0$ is strongly
convex on $\optdomain$ and $\convexindic{\optdomain}(\optvar)
= 0$ when $\optvar \in \optdomain$ and $+\infty$ otherwise,
$h$ can incorporate constraints in the update~\eqref{eq:da}.
When $h$ is Euclidean, that is, $h(\optvar) = \half
\dotp{\optvar}{A\optvar}$ for some $A \succ 0$, the updates are linear in the
observed gradients $g_i$, as
\begin{equation*}
  \optvar_k = -\stepsize_k A^{-1} \sum_{i \le k}
  g_i.
\end{equation*}
Adapting common practice in online
learning~\cite{CesaBianchiLu06} (and
in sequence model estimation~\eqref{eqn:sequence-model-obs}),
we allow \emph{improper} methods, which
may output estimates $\what{\optvar} \not \in \optdomain$,
saying that $\what{\optvar}$ is $\epsilon$-sub-optimal relative to
$\optdomain$ if
\begin{equation*}
  f(\what{\optvar}) \le \inf_{\optvar \in \optdomain} f(\optvar) +
  \epsilon.
\end{equation*}

Drawing a parallel between $\Phi$ in the Gaussian sequence model and $h$ in
dual averaging~\eqref{eq:da}, we show that because of duality gaps in
certain min-max problems, a dichotomy holds for stochastic and online convex
optimization similar to that holding for the Gaussian sequence model: if
$\optdomain$ is quadratically convex, there is a Euclidean $h$ (yielding
linear updates~\eqref{eq:da}) that is minimax rate optimal for
problem~\eqref{eqn:problem}, while there exist non-quadratically convex
$\optdomain$ for which Euclidean distance-generating $h$ are arbitrarily
suboptimal.
Taking a computational perspective, this means that for some
problems one \emph{must} use more sophisticated methods than ``linear''
updates.
We show that this analogy holds, though the measurement of a set's
deviance from quadratic convexity, and hence the gap in attainable
performance between linear and nonlinear methods, differs between Gaussian
sequence models and stochastic optimization: there are constraint sets
$\optdomain$ for which linear estimators are (rate) optimal in the Gaussian
sequence model but not for stochastic optimization, and vice versa.
%

More precisely, we prove that for orthosymmetric
quadratically convex bodies $\optdomain$, subgradient methods with a
fixed diagonal re-scaling are minimax rate optimal.
This guarantees that for
a large collection of constraints (e.g.\ $\ell_2$-balls, weighted
$\ell_p$-bodies for $p\geq 2$, or hyperrectangles) a diagonal re-scaling
suffices.
This is important in, e.g., machine learning problems of appropriate
geometry, such as in linear classification problems where the data
(features) are sparse, so using a dense predictor $\optvar$ is
natural~\cite{DuchiHaSi11, DuchiJoMc13}. Conversely, we show that if the
constraint set $\optdomain$ is a (scaled) $\ell_p$-ball, $1 \le p < 2$,
then, considering unconstrained updates~\eqref{eq:da}, the regret of the
best method of linear type (i.e.\ $h$ quadratic)
can be $\sqrt{n / \log n}$ times larger than the
minimax rate in online convex optimization.
As part of this, we provide new
information-theoretic lower bounds on optimization for general convex
constraints $\optdomain$. In contrast to the frequent (but
illogical) practice
of comparing convergence upper bounds, we demonstrate that
the gap between linear and non-linear methods must hold.
Sections~\ref{sec:hard-euclidean-problems}
and~\ref{sec:role-quadratic-convexity}
also show
how the departure from
quadratic convexity affects convergence guarantees:
comparing the $\ell_1$ diameters of $\optdomain$ and
its second-order lifts via
\begin{equation*}
  \sup_{x \in \optdomain} \lone{\optvar}
  ~~ \mbox{versus} ~~
  \sup_{v} \left\{\lone{v}
  \mid v^2 \in \conv\{\diag(xx^\top), x \in \optdomain\})
  \right\},
\end{equation*}
the gap between the left and right quantities (essentially) characterizes
the gap in performance between linear and nonlinear methods for stochastic
optimization, while Kolmogorov $n$-widths~\eqref{eqn:n-widths-intro} capture
that in the Gaussian sequence model.

We extend our results to an additional computational consideration: whether
an algorithm must be adaptive, that is, it must change its update rules over
time based on observations. We demonstrate that non-adaptive linear methods
necessarily suffer slower convergence rates than adaptive methods in online
problems.  One perspective on our results is thus computational, though with
a different angle than most current work on tradeoffs between statistics and
computational complexity. Much of this literature takes as inspiration the
classical perspective that the gap between polynomial and non-polynomial
time algorithms forms the great watershed in computational complexity, thus
necessitating a class of ``hard'' problems while allowing essentialy
arbitrary algorithms~\cite{BerthetRi13, BrennanBrHu18, CaiWu20}.  We take an
alternative perspective that allows more nuance in the types of convergence
rates we can achieve---differentiating between various polynomials---by
restricting the algorithms we consider to those in families common in
optimization.

Our conclusions relate to the growing literature in adaptive algorithms
\cite{BartlettHaRa07, DuchiHaSi11, OrabonaCr10, OrabonaPa18,
  CutkoskySa19}. Our results effectively prescribe that these adaptive
algorithms are useful when the constraint set is quadratically convex, as
this guarantees a minimax optimal diagonal pre-conditioner. More,
different sets suggest different regularizers. For example, when the
constraint set is a hyperrectangle, AdaGrad has regret at most $\sqrt{2}$
times that of the best post-hoc pre-conditioner, which we show is minimax
optimal, while (non-adaptive) standard gradient methods can be $\sqrt{n}$
suboptimal on such problems.
Conversely, our results show that these
diagonal rescaling methods may be quite
rate sub-optimal for non-quadratically convex constraint sets.
Our
results thus clarify existing convergence guarantees~\cite{Nesterov09,
  NemirovskiJuLaSh09, DuchiHaSi11, WilsonRoStSrRe17}: when the geometry of
$\optdomain$ and $\partial \f$ is appropriate for adaptive gradient methods
or Euclidean algorithms, one should use them; when it is not---the
constraints $\optdomain$ are not quadratically convex---other
methods admit substantially better convergence.


\paragraph{Notation}
We use $n$ to refer to the dimension of problems, and we use $k$
to denote either a sample size or
number of iterations. We let $\R^\N = \{(\optvar_j)_{j = 1}^\infty\}$
denote sequence space. For a norm
$\gamma$, the set
$\ball{\gamma}{x_0}{r} \defeq \{x  \mid
\gamma(x-x_0) \leq r\}$
denotes the ball of radius $r$ around $x_0$ in the $\gamma$ norm.  For $p
\in [1, \infty]$ we use the shorthand $\ball{p}{x_0}{r} :=
\ball{\|\cdot\|_p}{x_0}{r}$.  The dual norm of $\gamma$ is $\gamma^*(z) =
\sup_{\gamma(x) \le 1} \inprod{x}{z}$. For $\optvar, \tau \in \R^n$ or $\R^\N$,
we abuse
notation and define $\optvar^2 := (\optvar^2_j)_{j \ge 1}$, $|\optvar| :=
(|\optvar_j|)_{j \ge 1}$, $\frac{\optvar}{\tau} := (\optvar_j/\tau_j)_{j \ge 1}$
and $\optvar \odot \tau \defeq (\optvar_j \tau_j)_{j \ge 1}$, and
similarly for sets $X$, if $f : \R \to \R$ then we let $f(X) =
\{(f(x_j))_{j \ge 1} \mid x \in X\}$ be the elementwise application of $f$
to elements of $X$.  The function $h$
denotes a \textit{distance generating function}, i.e.\ a function strongly
convex with respect to a norm $\norm{\cdot}$; $\breg{h}{x}{y} = h(x) - h(y)
- \inprod{\nabla h(y)}{x - y}$ denotes the Bregman divergence, where recall that
$h$ is strongly convex with respect to $\norm{\cdot}$ if and only if
$\breg{h}{x}{y} \ge \half \norm{x - y}^2$.  The subdifferential of
$\f(\cdot, \statval)$ at $\optvar$ is $\partial_\optvar \f(\optvar,
\statval)$. $\mi{X}{Y}$ is the (Shannon) mutual information between random
variables $X$ and $Y$. For a set $\Omega$ and $f, g:\Omega \to \R$, we write
$f\lesssim g$ if there exists a finite numerical constant $C$ such that
$f(t) \le C g(t)$ for $t \in \Omega$, and $f \asymp g$ if $g \lesssim f
\lesssim g$.


\section{Preliminaries and Background}
\label{sec:prelim}

We begin by reviewing the classical results in Gaussian sequence
models and presenting and
defining the minimax framework in which we analyze procedures.  We also review
standard stochastic subgradient methods and introduce the relevant geometric
notions of convexity we require.  As part of this, we give a new argument
showing the optimality of linear estimators for Gaussian sequence models
when the underlying constraint set is quadratically convex (which we define
presently).

\subsection*{Quadratic convexity and orthosymmetry}
A few geometric quantities are central to our development. 
For a set $\optdomain$, let $\optdomain^2
:= \{\optvar^2, \optvar\in\optdomain\}$ denote its (elementwise) square. The
set $\optdomain$ is \emph{quadratically convex} if $\optdomain^2$ is convex;
typical examples of quadratically convex sets are weighted $\ell_p$ bodies
for $p\geq 2$ or hyperrectangles. We let $\qhull(\optdomain)$ be the
quadratic convex hull of $\optdomain$, meaning the smallest convex and
quadratically convex set containing $\optdomain$.  The set $\optdomain
\subset \R^n$ or $\optdomain \subset \R^\N$ is
\emph{orthosymmetric} if it is invariant to flipping the signs of any
coordinate: if $\optvar\in\optdomain$ then $\sigma_j \in \{\pm 1\}$
implies $(\sigma_j \optvar_j)_{j \ge 1} \in \optdomain$.
Similarly, a norm $\gamma$ is orthosymmetric if $\gamma(g) =
\gamma(|g|)$ for all $g$, and $\gamma$ is
quadratically convex if it induces a quadratically convex unit ball
$\ball{\gamma}{0}{1}$.
For any set $\optdomain$, we define the squared convex hull
and square root
\begin{equation*}
  \squaredhull(\optdomain) \defeq \conv \left\{(\optvar_j^2)
  \mid \optvar \in \optdomain\right\}
  ~~ \mbox{and} ~~
  \sqrt{\squaredhull(\optdomain)}
  = \{(\sqrt{y_j}) \mid y \in \squaredhull(\optdomain)\},
\end{equation*}
the latter of which is always convex by the concavity of the square root.
For orthosymmetric $\optdomain$,
\begin{equation*}
  \qhull(\optdomain)
  = \left\{s \odot \optvar \mid \optvar \in \sqrt{\squaredhull(\optdomain)},
  s_j \in \{\pm 1\} ~ \mbox{for~all~} j
  \right\}.
\end{equation*}

\subsection{The Gaussian sequence model}
\label{sec:gsm}

\newcommand{\gsmrisk}{R}
\newcommand{\maxgsmrisk}{R^*}
\newcommand{\lineargsmrisk}{R_{\textup{lin}}^*}
\newcommand{\noise}{\xi}
\newcommand{\rectangles}{\mc{R}}

Gaussian sequences provide a model for analyzing parametric and
nonparametric statistical procedures, and tools developed in their analysis
form a bedrock of modern statistical estimation~\cite{Tsybakov09,
  Johnstone17}; we provide some perspective on estimation in the sequence
model.
In the Gaussian sequence model, we begin with a (typically convex and
compact) set $\optdomain \subset \R^n$ or in sequence space $\R^\N$, and for
an unknown $\optvar \in \optdomain$ observe $y = \optvar + \noise$, where
$\noise \sim \normal(0, \sigma^2 I)$.  The goal is to estimate $\optvar$ in
some sense optimally, and frequently one considers sequences with
$\optdomain \subset \R^n$ and $\sigma^2$ scaling as $1/n$, which analogizes
estimation with $n$ observations, so that rates of
convergence as $\sigma^2 \downarrow 0$ become the main
focus~\cite{Johnstone17}.  An interesting point of contrast is when linear
estimators are sufficient to achieve (near) optimal performance or nonlinear
estimators are necessary. An estimator $\what{\optvar} = \what{\optvar}(y)$
is \emph{linear} if it has the form $\what{\optvar} = A y$ for a linear
operator $A$ and nonlinear otherwise.  We consider the \emph{risk}
\begin{equation*}
  \gsmrisk(\what{\optvar}, \optvar)
  \defeq \E\left[\ltwos{\what{\optvar}(y) - \optvar}^2\right],
\end{equation*}
where for linear estimators of the form $\what{\optvar} = Ay$, we use the
shorthand
\begin{equation}
  \gsmrisk(A, x) = \E\left[\ltwo{Ay - x}^2\right]
  = \E\left[\ltwo{(A - I)x + A \noise}^2\right]
  = \ltwo{(A - I) x}^2 + \sigma^2 \lfro{A}^2.
  \label{eqn:linear-gsm-shorthand}
\end{equation}
The \emph{maximum risk} of an estimator over the set $X$ is
\begin{equation*}
  \maxgsmrisk(\what{x}, X) \defeq \sup_{x \in X} \gsmrisk(\what{x}, x),
\end{equation*}
while the minimax risk and linear minimax risk are
\begin{equation*}
  \maxgsmrisk(X) \defeq \inf_{\what{x}} \maxgsmrisk(\what{x}, X)
  ~~~ \mbox{and} ~~~
  \lineargsmrisk(X) \defeq \inf_A \maxgsmrisk(A, X).
\end{equation*}

\citet{DonohoLiMa90} prove fundamental results relating the minimax risk
and linear minimax risk for the Gaussian sequence model, and among their
main results is that if the set $\optdomain$ is an orthosymmetric
quadratically convex body, then $\lineargsmrisk(\optdomain) \le 1.25
\maxgsmrisk(\optdomain)$, and moreover, $\lineargsmrisk(\optdomain) =
\sup_{H \subset \optdomain} \lineargsmrisk(H)$, where $H$ is a
(hyper)rectangle.
We provide an alternative approach to some of
these arguments via convex duality, which allows us to put these
arguments and the rest of our development on similar intellectual
footing, though we defer proofs to appendices.
In brief, the Sion and Fan minimax theorems~\cite{Sion58, Fan53},
coupled with quadratic lifts of the set $\optdomain$, play an essential role
in all of our results.
\begin{proposition}
  \label{proposition:gsm-linear}
  Assume $\sigma^2 > 0$ and $X \subset \R^n$ is an orthosymmetric convex
  body. Then the matrix $A$ minimizing $\maxgsmrisk(A, X)$ is diagonal and
  unique. Moreover,
  \begin{equation*}
    \inf_A \maxgsmrisk(A, X)
    = \inf_{d \in \R^n}
    \sup_{x \in X} \left\{\sum_{j = 1}^n
    (d_j - 1)^2 x_j^2 + \sigma^2 d_j^2 \right\}.
  \end{equation*}
\end{proposition}
\noindent
See Appendix~\ref{sec:proof-gsm-linear} for the proof of the result.
%
%
Then a quick proof via saddle point duality characterizes the
linear minimax risk for the Gaussian
sequence model (see Appendix~\ref{sec:proof-finite-dim-linear-risk-gsm}).
\begin{corollary}
  \label{corollary:finite-dim-linear-risk-gsm}
  Let $\optdomain \subset \R^n$ be an orthosymmetric
  quadratically convex body. Then
  \begin{equation*}
    \inf_A \maxgsmrisk(A, \optdomain)
    = \sup_{q \in \qhull(\optdomain)}
    \sum_{j = 1}^n \frac{\sigma^2 q_j^2}{q_j^2 + \sigma^2}.
  \end{equation*}
\end{corollary}
\noindent
An approximation argument extends
Corollary~\ref{corollary:finite-dim-linear-risk-gsm} to sequence space (see
Appendix~\ref{sec:proof-linear-risk-general-gsm}).

\begin{corollary}
  \label{corollary:linear-risk-general-gsm}
  Let $\optdomain \subset \R^\N$ be orthosymmetric and compact for
  $\ell_2(\N)$. Then
  \begin{equation*}
    \inf_A \maxgsmrisk(A, \optdomain)
    = \sup_{q \in \qhull(\optdomain)}
    \sum_{j = 1}^\infty \frac{q_j^2 \sigma^2}{q_j^2 + \sigma^2}.
  \end{equation*}
\end{corollary}


With these results in place, we can provide an alternative proof that
if $\optdomain$ is quadratically convex, then
the
linear minimax risk for the Gaussian sequence model is equal to linear
minimax risk over all rectangular subsets of $\optdomain$
(recovering \citep[Theorem
  7]{DonohoLiMa90}).  Let $\rectangles(\optdomain)$ denote the collection of
orthosymmetric rectangular subsets of $X$, that is, sets of the form
$[-\optvar_j, \optvar_j]_{j \ge 1} \subset \optdomain$.
\begin{corollary}
  \label{corollary:qc-gsm}
  Let $\optdomain$ be quadratically convex, compact, and orthosymmetric. Then
  \begin{equation*}
    \inf_{\what{x} = Ay} \sup_{x \in X} \E[\ltwos{\what{x} - x}^2]
    = \inf_A \maxgsmrisk(A, X)
    = \sup_{H \in \rectangles(X)} \inf_A \maxgsmrisk(A, H)
    = \sigma^2 \sup_{x \in X} \sum_{j \ge 1} \frac{x_j^2}{x_j^2 + \sigma^2}.
  \end{equation*}
\end{corollary}
\begin{proof}
  Because $\optdomain = \qhull(\optdomain)$,
  Corollary~\ref{corollary:linear-risk-general-gsm}
  implies
  $\inf_A \maxgsmrisk(A, \optdomain)
  = \sup_{\optvar \in \optdomain} \sum_{j \ge 1} \frac{\sigma^2 \optvar_j^2}{
    \optvar_j^2 + \sigma^2}$.
  For any $\optvar \in \optdomain$,
  the hyperrectangle $H = \bigotimes_{j \ge 1} [-|\optvar_j|, |\optvar_j|]$
  satisfies
  $\inf_A \maxgsmrisk(A, H) = \sum_{j \ge 1}
  \frac{\sigma^2 \optvar_j^2}{\optvar_j^2 + \sigma^2}$,
  again by Corollary~\ref{corollary:linear-risk-general-gsm}.
  Take a supremum.
\end{proof}

\subsubsection{Fundamental limits for the Gaussian sequence model}

To introduce the ideas for lower bounds we employ in the remainder of the
paper and highlight quadratic
convexity,
we also review some of the fundamental lower and upper bounds in
Gaussian sequence models for general orthosymmetric sets $\optdomain$.
The following
result, which for completeness we prove in
Appendix~\ref{sec:proof-gsm-lecam-lower}, is typical; it provides
worse constants than those available by more careful constructions
(cf.~\cite[Chapter 4]{Johnstone17}), but it introduces
some of our main types of arguments.
\begin{proposition}
  \label{proposition:gsm-lecam-lower}
  Let $\optdomain$ be an orthosymmetric convex set. Then
  for any $\optvar \in \optdomain$,
  \begin{equation*}
    \maxgsmrisk(\optdomain) \ge \frac{1}{10}
    \sum_{j \ge 1} \optvar_j^2 \wedge \sigma^2.
  \end{equation*}
\end{proposition}
\noindent
As an immediate consequence to Proposition~\ref{proposition:gsm-lecam-lower},
we see that linear estimators are minimax rate optimal whenever
$\optdomain$ is quadratically convex.
\begin{corollary}
  \label{corollary:gsm-linear-same-as-nonlinear}
  Let $\optdomain$ be quadratically convex, orthosymmetric, and compact.
  Then
  \begin{equation*}
    \maxgsmrisk(\optdomain) \le \inf_A \maxgsmrisk(A, \optdomain)
    \le 10 \maxgsmrisk(\optdomain).
  \end{equation*}
\end{corollary}
\begin{proof}
  Observe that $\optvar_j^2 \wedge \sigma^2
  \ge \frac{\optvar_j^2 \sigma^2}{\optvar_j^2 + \sigma^2}$,
  and then apply Corollary~\ref{corollary:qc-gsm}
  and Proposition~\ref{proposition:gsm-lecam-lower}.
\end{proof}

\subsubsection{Soft-thresholding and nonlinear estimators}
\label{sec:soft-thresh}

\newcommand{\softthresh}{\mathsf{S}}

Whenever $\optdomain$ is quadratically convex, linear estimators
are (nearly) minimax optimal, so two fundamental questions remain:
first, when do we
indeed require nonlinearity, and second, whether there
is a generally rate-optimal nonlinear method.
Soft-thresholding,
which estimates $\optvar$ by elementwise applying
the soft-thresholding operator
\begin{equation*}
  \softthresh_\lambda(y) \defeq \sign(y) \cdot \hinge{|y| - \lambda},
\end{equation*}
provides this nearly optimal method for Gaussian
sequence estimation.
\citet[Corollary 8.4]{Johnstone17} gives a paradigmatic bound for the
risk of soft thresholding on a single coordinate:
\begin{corollary}
  \label{corollary:soft-threshold}
  Let $y \sim \normal(\optvar, \sigma^2)$ and
  $\delta \in \openleft{0}{1}$. Then for $\lambda = \sqrt{2 \sigma^2
    \log \delta^{-1}}$,
  \begin{equation*}
    \E[(\softthresh_\lambda(y) - \optvar)^2]
    \le \delta \sigma^2 + \left(1 + 2 \log \delta^{-1}\right)
    \cdot \optvar^2 \wedge \sigma^2.
  \end{equation*}
\end{corollary}
\noindent
If $\optdomain \subset \R^n$ and $\delta = \frac{1}{n}$,
the coordinatewise estimator
$\what{\optvar}_j = \softthresh_\lambda(y_j)$, for
$\lambda = \sqrt{2 \sigma^2 \log n}$, obtains
\begin{equation*}
  \E[\ltwos{\what{\optvar} - \optvar}^2]
  \le \sigma^2 + (2 \log n + 1) \sum_{j = 1}^n \optvar_j^2 \wedge \sigma^2,
\end{equation*}
whose risk is essentially optimal (cf.~\cite[Proposition 8.8]{Johnstone17}
and Proposition~\ref{proposition:gsm-lecam-lower}).


One typically considers the risk in Gaussian sequence models as $\sigma^2
\to 0$---for example, with the scaling $\sigma^2 \propto 1/n$ in an
$n$-dimensional model---so that when $\optdomain$ is ``non-pathological''
we wish to develop a general estimator that
generically achieves optimal scaling, as we will also develop for
stochastic optimization in the sequel.
For this, we adapt \citeauthor{DonohoLiMa90}'s
results~\citep[Thm.~12]{DonohoLiMa90} by combining projecting coordinates
to zero and soft-thresholding.
Define
\begin{equation*}
  N(\sigma, \optdomain) \defeq \inf \left\{n \in \N ~ \mbox{s.t.} ~
  \sup_{\optvar \in \optdomain} |\optvar_j| \le \sigma
  ~ \mbox{for~all~} j \ge n\right\},
\end{equation*}
and for $\lambda = \sqrt{2 \sigma^2 \log N(\sigma, \optdomain)}$,
define the truncated soft-thresholding estimator
\begin{equation}
  \label{eqn:truncated-soft-est}
  \what{\optvar}_j = \begin{cases} \softthresh_\lambda(y_j)
    & \mbox{if}~ j \le N(\sigma, \optdomain) \\
    0 & \mbox{otherwise}.
  \end{cases}
\end{equation}
We have the following corollary, whose proof we defer to
appendix~\ref{sec:proof-soft-thresh-minimax}.
\begin{corollary}
  \label{corollary:soft-thresh-minimax}
  The truncated soft-thresholding estimator~\eqref{eqn:truncated-soft-est}
  satisfies
  \begin{equation*}
    \E[\ltwos{\what{\optvar} - \optvar}^2]
    \le \sigma^2 + (2 \log N(\sigma, \optdomain) + 1) \sum_{j \ge 1}
    \optvar_j^2 \wedge \sigma^2.
  \end{equation*}
  If $N(\sigma, \optdomain)$ is polynomial in $1/\sigma$
  as $\sigma \to 0$, there exists $C(\sigma) \le O(1) \log \frac{1}{\sigma}$
  such that
  \begin{equation*}
    \gsmrisk(\what{\optvar}, \optdomain)
    \le C(\sigma) \cdot \maxgsmrisk(\optdomain).
  \end{equation*}
\end{corollary}

The assumption that $N(\sigma, \optdomain)$ is polynomial in $1/\sigma$ as
$\sigma \to 0$ is typically lenient; any $\optdomain \subset \R^n$ evidently
satisfies it, and so do sets contained in $\ell_p$ bodies $\{\optvar \in
\R^\N \mid \sum_{j = 1}^\infty a_j |\optvar_j|^p \le 1\}$ so long as $a_j
\to \infty$ polynomially quickly in the index $j$.  In brief, the
(nonlinear) truncated soft-thresholding
estimator~\eqref{eqn:truncated-soft-est} is nearly minimax rate-optimal: to
within a logarithmic factor it achieves the minimax optimal rate for any
``sufficiently compact'' set $\optdomain$. Moreover,
because
$\half (a \wedge b) \le \frac{ab}{a + b}
\le a \wedge b$ for $a, b \ge 0$,
the difference between the quantities
\begin{equation*}
  \sup_{\optvar \in \optdomain} \sum_j \optvar_j^2 \wedge \sigma^2
  ~~ \mbox{and} ~~
  \sup_{\optvar \in \qhull(\optdomain)}
  \sum_j \optvar_j^2 \wedge \sigma^2
\end{equation*}
evidently determines whether nonlinear estimators are necessary.

\subsection{Minimax rates for convex stochastic optimization}
\label{sec:minimax-rates-convex-stochastic}

We turn to stochastic optimization.
We measure the complexity of problem
families in two familiar ways: stochastic minimax complexity and
regret~\cite{NemirovskiYu83, AgarwalBaRaWa12, CesaBianchiLu06, Duchi18}. Let
$\optdomain \subset \R^n$ be a closed convex set, $\statdomain$ a sample
space, and $\mc{F}$ a collection of functions
$\f:\R^n\times\statdomain\to\R$.  For a collection $\mc{P}$ of distributions
over $\statdomain$, recall~\eqref{eqn:problem} that $\ff_P(\optvar) \defeq
\int \f(\optvar, \statval) dP(\statval)$ is the expected loss of the point
$\optvar$. Then the \emph{minimax stochastic risk} is
\begin{equation*}
  \minimaxs(\optdomain, \mc{F}, \mc{P})
  \defeq \inf_{\what{\optvar}_k} \sup_{\f \in \mc{F}}
  \sup_{P \in \mc{P}} \E\left[ \ff_P(\what{\optvar}_k(\statrv_1^k))
    - \inf_{\optvar \in \optdomain} \ff_P(\optvar) \right],
\end{equation*}
where the expectation is taken over $\statrv_1^k \simiid P$ and the infimum
ranges over all measurable functions $\what{\optvar}_k$ of $\statdomain^k$.
The average \emph{minimax regret} provides a related notion, instead taking
a supremum over samples $\statval_1^k \in \statdomain^k$ and measuring
instantaneous losses.
In this case, an algorithm consists of a sequence of
decisions $\what{\optvar}_1, \what{\optvar}_2, \ldots, \what{\optvar}_k$,
where $\what{\optvar}_i$ is chosen conditional on samples
$\statval_1^{i-1}$, so that
\begin{equation*}
  \minimaxr(\optdomain, \mc{F}, \statdomain)
  \defeq \inf_{\what{\optvar}_{1:k}} \sup_{\f \in \mc{F},
    \statval_1^k \in \statdomain^k}
  \sup_{\optvar \in \optdomain} \frac{1}{k}
  \sum_{i = 1}^k \left[\f\big(\what{\optvar}_i\big(\statval_1^{i-1}\big),
    \statval_i\big)
    - \f\!\left(\optvar, \statval_i\right)\right].
\end{equation*}
In each definition, the point estimates $\what{\optvar}$ may lie outside the
constraint set $\optdomain$, making our lower bounds stronger results;
in the language of learning
theory~\cite{CesaBianchiLu06, ShalevShSrSr09}, we allow ``improper''
predictions, but compare to the best elements $\optvar \in \optdomain$.
(In our cases, this does not change regret by more than a
constant factor.)
As online-to-batch conversions make
clear~\cite{CesaBianchiCoGe04}, we always have $\minimaxs \le \minimaxr$;
thus we typically provide lower bounds on $\minimaxs$ and upper bounds on
$\minimaxr$. In many of the cases we consider, these quantities are
essentially equivalent~\cite[e.g.][]{ShalevShSrSr09}, though in cases where we
wish to provide explicit lower bounds on algorithms we typically use regret.

Lipschitz continuity properties form a central lever for demonstrating
convergence in general (potentially non-smooth) stochastic convex
optimization~\cite{NemirovskiYu83, AgarwalBaRaWa12, Duchi18}, and
consequently, we study functions for which a norm $\gamma$ on $\R^n$
($\gamma$ as a mnemonic for gradient) specifies these:
\begin{equation}
  \label{eqn:func-family}
  \mc{F}^{\gamma, r}
  \defeq \left\{\f : \R^n \times \statdomain \to \R
  \mid
  \mbox{for~all~} \optvar \in \R^n,
  ~ g\in\partial_\optvar \f(\optvar, \statval), ~ \gamma(g) \leq r \right\},
\end{equation}
The gradient bound condition $\gamma(g) \le r$ is equivalent to the
Lipschitz condition $|\f(\optvar, \statval) - \f(\optvar', \statval)| \le
r\gamma^*(\optvar - \optvar')$, where $\gamma^*$ is the dual norm to
$\gamma$.  We use the shorthands
\begin{equation*}
  \minimaxr(\optdomain, \mathcal{F}) := \sup_\mathcal{\statrv} \minimaxr(\optdomain,
  \mathcal{F}, \mathcal{\statrv}) \mbox{~~and~~}
  \minimaxs(\optdomain, \mathcal{F}) := \sup_{\mathcal{\statrv}}
  \sup_{\mathcal{P}\subset \mathcal{P}(\mathcal{\statrv})} \minimaxs(\optdomain,
  \mathcal{F}, \mathcal{P})
\end{equation*}
as well as
\begin{equation*}
  \minimaxr(\optdomain, \gamma) := \sup_\mathcal{\statrv} \minimaxr(\optdomain,
  \mathcal{F}^{\gamma, 1}, \mathcal{\statrv}) \mbox{~~and~~}
  \minimaxs(\optdomain, \gamma) := \sup_{\mathcal{\statrv}}
  \sup_{\mathcal{P}\subset \mathcal{P}(\mathcal{\statrv})} \minimaxs(\optdomain,
  \mathcal{F}^{\gamma, 1}, \mathcal{P})
\end{equation*}
as the Lipschitzian properties of $\mc{F}$ in relation to $\optdomain$
determine the minimax regret and risk.
We now discuss the main techniques for lower-bounding the (stochastic)
minimax risk, then recapitulate the familiar bounds for online
optimization coming from mirror descent and dual averaging,
before we move to the more precise results we prove using these techniques
and known bounds.


\subsubsection{Techniques for minimax optimization lower bounds}
\label{sec:est-to-test}

We review the Le Cam, Fano and Assouad
methods~\cite{Assouad83,Yu97, AgarwalBaRaWa12, Wainwright19} for proving lower
bounds.
Each reduces estimation to testing then uses information theoretic tools to
bound the probability of error in the hypothesis tests.

\paragraph{Le Cam and Fano methods}

A standard reduction from estimation to testing
(see \cite[Ch.~5]{Duchi18}) forms the basis for most of our proofs.
To set the stage,
let $\mc{P}$ be a collection of distributions over $\statdomain$ and
$L:\R^n \times \mc{P} \to \R_+$ satisfy
\begin{equation*}
  \inf_{\optvar\in\optdomain} L(\optvar, P) = 0
  ~~ \mbox{for~} P \in \mc{P}.
\end{equation*}
For distributions $P, Q \in \mc{P}$, define the separation
\begin{equation*}
  \mathsf{sep}_L(P, Q) := \sup\left\lbrace \delta \ge 0
  \;\middle|\;\; \mbox{for all~}
  \optvar,
  \begin{array}{c} L(\optvar, P) \leq \delta ~\mbox{implies}~
    L(\optvar, Q) \geq \delta \\
    L(\optvar, Q) \leq \delta ~\mbox{implies}~
    L(\optvar, P) \geq \delta
  \end{array}\right\rbrace.
\end{equation*}
We say a collection $\{P_v\}_{v \in \mc{V}} \subset \mc{P}$ indexed
by $\mc{V}$ is $\delta$-separated
if $\mathsf{sep}_L(P_v, P_{v'}) \ge \delta$ for
$v \neq v'$, and any such family induces a testing lower bound:
\begin{lemma}[From estimation to testing]
  \label{lemma:est-to-test}
  Let $\{P_v\}_{v \in \mc{V}} \subset \mc{P}$ be $\delta$-separated.
  Then for any joint distribution $\P$ constructed by
  first drawing $V \in \mc{V}$ and then, conditional on
  $V = v$, sampling $\statrv_1^k \simiid P_v$,
  \begin{equation*}
    \inf_{\what{\optvar}} \sup_{P\in\mc{P}} \E_P L(\what{\optvar}(\statrv_1^k), P) \geq
    \delta \inf_{\psi}\P(\psi(\statrv_1^k) \neq V).
  \end{equation*}
\end{lemma}

With this, the classical Le Cam and Fano methods are straightforward
combinations of Lemma~\ref{lemma:opt-to-est} with (respectively) Le Cam's
lemma~\cite[Lemma 1]{Yu97} and Fano's inequality~\cite[Theorem
2.10.1]{CoverTh06}.
\begin{proposition}[Le Cam's method]
  \label{prop:le-cam}
  Let $P_0, P_1 \in \mc{P}$ and
  $\delta > 0$ satisfy $\mathsf{sep}_L(P_0, P_1) \ge \delta$.
  Then
  \begin{equation*}
    \inf_{\what{\optvar}}\sup_{P\in\mc{P}} \E_P L(\what{\optvar}(\statrv_1^k), P) \ge
    \frac{\delta}{2}\left(1 - \tvnormbig{P^k_0 - P^k_1}\right).
  \end{equation*}
\end{proposition}

\begin{proposition}[Fano's method]
  \label{prop:fano}
  Let $\mc{V}$ be a finite index set for which $\{P_v\}_{v\in\mc{V}}$ is
  $\delta$-separated.
  Then
  \begin{equation*}
    \inf_{\what{\optvar}}\sup_{P\in\mc{P}} \E_P L(\what{\optvar}(\statrv_1^k), P) \ge \delta\left(1 - \frac{\mathsf{I}(\statrv_1^k;V) + \log 2}{\log |\mc{V}|} \right).
  \end{equation*}
\end{proposition}

With these tools, minimax lower bounds on the stochastic risk $\minimaxs$ in
Section~\ref{sec:prelim} follow by (i) demonstrating an appropriate loss $L$
and (ii) separation.
We construct an object that simultaneously accomplishes both of these:
for an index set $\mc{V}$ with $\{P_v\}_{v \in \mc{V}} \subset \mc{P}$,
objective $F$ and population objectives
$\ff_v(\optvar) \defeq \E_{P_v}[\f(\optvar, \statrv)]$, define
\begin{equation}
  \label{eqn:optization-distance}
  \dopt(v, v', \optdomain) \defeq \inf_{\optvar}
  \left[ f_v(\optvar) + f_{v'}(\optvar) - \inf_{\optvar \in \optdomain}
  f_v(\optvar)
  - \inf_{\optvar \in \optdomain}
  f_{v'}(\optvar) \right].
\end{equation}
When $\inf_{\optvar \in \optdomain} \ff_v(\optvar) = \inf_{\optvar} \ff_v(\optvar)$,
which we require because we allow improper predictions,
this quantity guarantees appropriate separation.
Indeed, under this condition,
the loss $L(\optvar, P) \defeq \ff_P(\optvar) - \inf_{\optvar \in \optdomain}
\ff_P(\optvar)$ satisfies $\inf_\optvar L(\optvar, P) = 0$
for $P \in \{P_v\}_{v \in \mc{V}}$.
Moreover, if $L(\optvar, P_v) \le \half \dopt(v, v', \optdomain)$, then
evidently $L(\optvar, P_{v'}) \ge \half \dopt(v, v', \optdomain)$,
and so $\mathsf{sep}_L(P_v, P_{v'}) \ge \half \dopt(v, v', \optdomain)$.
Lemma~\ref{lemma:est-to-test} then implies
the following result, which is essentially
present in the paper~\cite{AgarwalBaRaWa12} (cf.~\cite{Duchi18}).

\begin{lemma}[From optimization to function estimation]
  \label{lemma:opt-to-est}
  Let $\{P_\packval\}_{\packval \in \packset} \subset \mc{P}$
  be a collection of distributions on $\statdomain$ and
  $\mc{F}$ be
  a collection a functions $\R^n\times\statdomain \to \R$,
  where for some $F \in \mc{F}$, the optimization
  distance~\eqref{eqn:optization-distance} satisfies
  $\dopt(v, v', \optdomain) \ge \delta \ge 0$
  for all $v \neq v' \in \mc{V}$.
  Then
  \begin{equation*}
    \minimaxr(\optdomain, \mc{F})
    \ge \minimaxs(\optdomain, \mc{F}, \mc{P})
    \ge \minimaxs(\optdomain, \{F\}, \mc{P})
    \ge \frac{\delta}{2}\inf_{\psi}\P(\psi(\statrv_1^k) \neq V).
  \end{equation*}
\end{lemma}

Our general strategy for proving lower bounds on $\minimaxs$ is as follows:
\begin{itemize}
	\item Choose a function $F\in\mc{F}$ and define $\mc{V}$ and
	$\{P_v\}_{v\in\mc{V}} \subset \mc{P}$ such that
	$\dopt(v, v', \optdomain) \ge \delta > 0$.
	\item Lower bound the testing error
	$\inf_{\psi} \P(\psi(\statrv_1^k) \neq V)$, and choose
	the largest separation $\delta$ to make this testing error a positive
	constant.
\end{itemize}

To showcase this proof technique, we prove that
minimax stochastic risk for $1$-dimensional optimization has
lower bound $1 / \sqrt{k}$; we use this to address technicalities
in later proofs.

\begin{lemma}
  \label{lem:1-d}
  Let
  $\mc{F}^{n=1} = \{\f:\R \times \statdomain \to \R \mid \f(\cdot, \statval)
  ~ \mbox{is} ~ \mbox{convex} ~ \mbox{and}~ 1\mbox{-Lipschitz}\}$. Then
  \begin{equation*}
    \minimaxs([-1, 1], \mc{F}^{n=1}) \ge \frac{1}{4 \sqrt{6 k}}.
  \end{equation*}
\end{lemma}
\begin{proof}
  Let $\optdomain = [-1, 1]$ and $\statdomain = \{\pm 1\}, \mc{V} = \{\pm 1\}$.
  To see the separation condition, let $F(\optvar, \statval) \defeq |\optvar
  - \statval|$.
  For
  $\delta \in [0, \half]$, we define $P_v$ so that for $\statrv\sim P_v$,
  \begin{equation*}
    P_\packval(\statrv = \packval) = \frac{1 + \delta}{2},
    ~~
    P_\packval(\statrv = -\packval) = \frac{1 - \delta}{2}.
  \end{equation*}
  Then
  $f_{v}(\optvar) = \frac{1+\delta}{2}|\optvar - v| +
  \frac{1-\delta}{2}|\optvar + v|$ and
  $\inf_\optvar f_{v}(\optvar) = \inf_{\optvar \in [-1, 1]}
  f_\packval(\optvar) = \frac{1-\delta}{2}$.
  To lower bound the separation, note that
  \begin{equation*}
    f_1(\optvar) + f_{-1}(\optvar) - \inf_{\optdomain}f_1 - \inf_{\optdomain}f_{-1} =
    |\optvar - 1| + |\optvar+1| - (1-\delta) \ge \delta.
  \end{equation*}
  This yields $\dopt(1, -1, \optdomain) \ge \delta$.
  
  We lower bound the testing error via Proposition~\ref{prop:le-cam}:
  by Pinsker's inequality that $\tvnorm{P - Q}^2 \le \half \dkl{P}{Q}$
  for any distributions $P$ and $Q$, and that
  $\dkl{P^k}{Q^k} = k \dkl{P}{Q}$, we obtain
  \begin{equation*}
    \inf_{\psi:\statdomain^k\to \{\pm 1\}}\P(\psi(\statrv_1^k)\neq V) =
    \half\left(1 - \tvnormbig{P_1^k - P_{-1}^k}\right) \ge
    \half\left(1-\sqrt{\frac{k}{2}\dkl{P_1}{P_{-1}}}\right).
  \end{equation*}
  Noting that
  $\dkl{P_1}{P_{-1}} = \delta\log\frac{1+\delta}{1-\delta} \le 3\delta^2$
  for $\delta \in [0, \half]$,
  setting $\delta = 1 / \sqrt{6k}$ yields the result.
\end{proof}


\paragraph{The Assouad method}
Assouad's method~\cite{Assouad83} reduces the problem
of estimation (or optimization) to one of multiple binary hypothesis tests.
In this case, we index a set of distributions $\mc{P} = \{P_v\}_{v \in
  \mc{V}}$ on a set $\statdomain$ by the hypercube $\mc{V} = \{\pm 1\}^n$. For
a function $\f : \R^n \times \statdomain \to \R$, we define
$\ff_v(\optvar) \defeq \E_{P_v}[\f(\optvar, \statrv)]$. Then for
a vector $\delta \in \R^n_+$, 
following \citet[Lemma 5.3.2]{Duchi18}, we say that
the functions $\{\ff_v\}$ induce a $\delta$-separation in Hamming
metric if
\begin{equation}
	\label{eqn:hamming-separation}
	\ff_v(\optvar) - \inf_{\optvar \in \optdomain} \ff_v(\optvar)
	\ge \sum_{j = 1}^n \delta_j \indic{\sign(\optvar_j) \neq v_j}.
\end{equation}
\begin{lemma}[Generalized Assouad's method]\label{lem:assouad}
  Let $\statrv_1^k \simiid P_V$, where $V \sim \mathsf{Uniform}(\{\pm 1\}^n)$.  Define
  the averages
  \begin{equation*}
    \P_{+j} \defeq \frac{1}{2^{n-1}} \sum_{v : v_j = 1} P_v^k
    ~~ \mbox{and} ~~
    \P_{-j} \defeq \frac{1}{2^{n-1}} \sum_{v : v_j = -1} P_v^k.
  \end{equation*}
  Assume that the collection $\{\ff_v\}$ for $\ff_v =
  \E_{P_v}[\f(\cdot, \statrv)]$ induces a
  $\delta$-separation~\eqref{eqn:hamming-separation}.
  Then
  \begin{equation*}
    \minimaxs(\optdomain, \{F\}, \mc{P})
    \ge \half \sum_{j = 1}^n \delta_j \left(1 - \tvnorm{\P_{+j} - \P_{-j}}
    \right).
  \end{equation*}
\end{lemma}

\subsubsection{Stochastic gradient methods, mirror descent, and regret}

Let us briefly review the canonical algorithms for solving the
problem~\eqref{eqn:problem} and their associated convergence guarantees.
For an algorithm outputing points $\optvar_1, \ldots, \optvar_k$, the
\emph{regret} on the sequence $\f(\cdot, \statval_i)$ with respect to a
point $\optvar$ is
\begin{equation*}
  \regret_k(\optvar) \defeq \sum_{i = 1}^k [
    \f(\optvar_i, \statval_i) - \f(\optvar, \statval_i)].
\end{equation*}
Recalling the definition $\breg{h}{\optvar}{\optvar_0} = h(\optvar) -
h(\optvar_0) - \inprod{\nabla h(\optvar_0)}{\optvar - \optvar_0}$ of the
Bregman divergence, the mirror descent
algorithm~\cite{NemirovskiYu83,BeckTe03} iteratively sets
\begin{equation}
  \label{eqn:smd}
  g_i \in \partial_\optvar \f(\optvar_i, \statval_i)
  ~~ \mbox{and~updates}~~
  \optvar^{\mathsf{MD}}_{i + 1}
  \defeq \argmin_{\optvar}
  \left\{g_i^\top \optvar + \frac{1}{\stepsize} \bregman{h}{\optvar}{\optvar_i}
  \right\}
\end{equation}
where $\stepsize > 0$ is a stepsize.
When the function $h$ is $1$-strongly convex with respect to a norm
$\norm{\cdot}$ with dual norm $\dnorm{\cdot}$ over $\dom h$,
the iterates~\eqref{eqn:smd} and the iterates~\eqref{eq:da} of
dual averaging satisfy
(cf.~\cite{BeckTe03, CesaBianchiLu06, Nesterov09})
\begin{equation}
  \label{eqn:md-regret}
  \regret_k(\optvar)
  \le \frac{\bregman{h}{\optvar}{\optvar_0}}{\stepsize}
  + \frac{\stepsize}{2} \sum_{i \le k} \dnorm{g_i}^2
  ~~\mbox{for all}~ \optvar \in \dom h.
\end{equation}
The choice $h(\optvar) = \frac{1}{2} \ltwo{\optvar}^2$ recovers the
classical stochastic gradient method, while the $p$-norm
algorithms~\cite{Gentile03, Shalev07, NemirovskiJuLaSh09, Duchi18}, defined
for $1 < p \le 2$, use $h(\optvar) = \frac{1}{2(p - 1)} \norm{\optvar}_p^2$;
each is strongly convex with respect to the $\ell_p$-norm $\norm{\cdot}_p$.
If $G = \{g \in \partial_\optvar \f(\optvar, \statval) \mid \optvar \in X,
\statval \in \statdomain\}$ denotes the set of possible subgradients, the
regret guarantee~\eqref{eqn:md-regret} becomes
\begin{equation*}
  \regret_k(\optvar)
  \le \frac{\norm{\optvar}_p^2}{2 (p - 1) \stepsize}
  + \frac{k \stepsize}{2} \sup_{g \in G} \norm{g}^2_q
\end{equation*}
if $x_0 = 0$ and $q = \frac{p}{p - 1}$ is conjugate to $p$. The
choice $\stepsize = \frac{1}{\sqrt{k}} \sup_{\optvar \in \optdomain}
\norm{\optvar}_p / \sup_{\gamma(g) \le 1} \norm{g}_q$ gives
the following now standard minimax regret
bound~\cite[cf.][Corollary 2.18]{Shalev12}.
\begin{proposition}
  \label{prop:rate-md}
  Let $\optdomain$ be closed convex orthosymmetric,
  $\gamma$ a norm, and $1 < p \le 2$,
  $q = \frac{p}{p-1}$.
  Mirror descent with $h(\optvar) := \frac{1}{2(q-1)} \norm{\optvar}_p^2$
  and stepsize $\stepsize = \frac{\sup_{\optvar \in \optdomain}
    \norm{\optvar}_p}{ \sqrt{k(p-1)}\sup_{\gamma(g) \le 1} \norm{g}_q}$ achieves
  regret
  \begin{equation*}
    \minimaxr(\optdomain, \gamma) \le
    \frac{\sup_{\optvar\in\optdomain} \norm{\optvar}_p
      \sup_{g\in\ball{\gamma}{0}{1}} \|g\|_{q}}{\sqrt{k(p - 1)}}.
  \end{equation*}
\end{proposition}

As we state in our definitions of minimax risk and regret, we
allow the point estimates to lie outside the constraint set $\optdomain$,
though as with the dual averaging update~\eqref{eq:da}, one
may incorporate constraints by replacing
$h$ with $h + \convexindic{\optdomain}$.
Even without such constraints,
the form~\eqref{eqn:md-regret} and Proposition~\ref{prop:rate-md}
still capture the
regret for all common constraint sets $\optdomain$~\cite{Shalev07}.
\begin{example}[$p$-norm algorithms]
  Let $n \ge 3$.
  With $h(\optvar) = \frac{1}{2(p-1)} \norm{\optvar}_p^2$,
  taking $p = 1 +
  \frac{1}{\log n}$ and $q = 1 + \log n$,
  we have $\norm{g}_q \le e \linf{g}$ for all $g \in \R^n$,
  while $\norm{x}_p \le \lone{x}$.
  Assuming that gradients $g$ belong to the $\ell_\infty$-ball of radius 1,
  inequality~\eqref{eqn:md-regret} and Proposition~\ref{prop:rate-md} show
  that for any $\optdomain \subset \R^n$, the stepsize $\stepsize = e^{-1}
  \sqrt{\log n / k} \sup_{\optvar \in \optdomain} \lone{\optvar}$
  yields
  \begin{equation*}
    \sup_{\optvar \in \optdomain} \regret_k(\optvar)
    \le
    e \sqrt{k \log n}
    \sup_{x \in X} \lone{x}.
  \end{equation*}
  Taking the comparator class $\optdomain$ to be the $\ell_1$-ball gives
  regret $e \cdot \sqrt{k \log n}$.
\end{example}

\subsubsection{Euclidean gradient methods and quadratic convexity}

We frequently focus on distance generating functions of the form $h(\optvar)
= \half \inprod{\optvar}{A\optvar}$ for a fixed positive semi-definite
matrix $A$.
For an arbitrary $A$, we will refer to these methods as
\textbf{\eucgrad} and for a diagonal $A$ as \textbf{\dscaled}.
In this case,
the mirror descent update is the stochastic gradient update with
$\invert{A}g$, where $g$ is a stochastic subgradient.
We refer to all
such methods as \textbf{methods of linear type}, as their update sequence
guarantees the linearity
\begin{equation*}
  x_k = -A^{-1} \sum_{i = 1}^{k-1} g_i.
\end{equation*}
With updates of this type, convex quadratic hulls of both the domain
$\optdomain$ and the collections of gradients $\partial \f(x, \statval)$
appear in convergence bounds (the sequel shows this appearance is no
accident).

The \dscaled (componentwise
re-scaling of the subgradients) are equivalent to using
$h_\Lambda(\optvar) \defeq \half \dotp{\optvar}{\Lambda \optvar}$ for
$\Lambda = \mathrm{diag}(\lambda) \succeq 0$ in the mirror descent
update~\eqref{eqn:smd}.
In this case, for any norm $\gamma$ on the gradients,
the minimax regret bound~\eqref{eqn:md-regret} becomes
\begin{equation*}
  \sup_{\optvar\in\optdomain} \regret_{k, \Lambda}(\optvar) \le
  \frac{1}{2k} \left[
    \sup_{\optvar \in \optdomain} \optvar^\top \Lambda \optvar
    + \sum_{i \le k} g_i^\top \Lambda^{-1} g_i \right]
  \le \frac{1}{2 k}
  \left[\sup_{\optvar\in\optdomain} \optvar^\top\Lambda\optvar
    + k \sup_{g\in\ball{\gamma}{0}{1}}g^\top \Lambda^{-1} g \right].
\end{equation*}
The rightmost term upper bounds the minimax regret, so we may take an
infimum over $\Lambda$, yielding
\begin{equation}
  \label{eq:adagrad-regret}
  \minimaxr(\optdomain, \gamma) \leq
  \frac{1}{2k} \inf_{\lambda \succeq 0}
  \sup_{\optvar\in\optdomain}
  \sup_{g\in\ball{\gamma}{0}{1}} \bigg[
    \sum_{j\leq n}\lambda_j
    \optvar_j^2 + k\sum_{j\leq n} \frac{1}{\lambda_j} g^2_j \bigg]
\end{equation}
The regret bound~\eqref{eq:adagrad-regret} holds without assumptions on
$\optdomain$ or $\gamma$.
The key is that
strong duality allows us to relate this quantity to the quadratic
convex hulls (recall Sec.~\ref{sec:prelim}) of $\optdomain$ and
$\ball{\gamma}{0}{1}$, which will in turn allow
us to prove minimax optimality of Euclidean procedures in the coming
sections.

\begin{proposition}
  \label{prop:minimax-thm}
  Let $G, \optdomain \subset \R^n$ be compact convex sets. Then
  \begin{equation*}
    \inf_{\lambda \succ 0} \sup_{\optvar \in \optdomain, g \in G}
    \left\{ \lambda^\top \optvar^2 + \left(\frac{1}{\lambda}\right)^\top
    g^2\right\}
    =
    \sup_{\optvar \in \qhull(\optdomain),
    g \in \qhull(G)} \inf_{\lambda \succ 0}
    \left\lbrace\lambda^\top \optvar^2 + \left(\frac{1}{\lambda}\right)^\top
    g^2\right\rbrace.
  \end{equation*}
\end{proposition}
\begin{proof}
  The
  (weighted) squared $2$-norm becomes a linear functional when lifted to the
  squared sets $\optdomain^2 \defeq \{\optvar^2 \mid \optvar \in
  \optdomain\}$ and $G^2$.
  Indeed, defining $J(\tau, w, \lambda) \defeq \lambda^\top \tau +
  (\frac{1}{\lambda})^\top w$, the function $J$ is concave-convex: it is
  linear (a fortiori concave) in $(\tau, w)$ and convex in $\lambda \succ 0$.
  Thus, using that the set $\{\lambda\in\R^n_+ \}$ is convex and
  $\squaredhull(\optdomain) \times \squaredhull(G)$ is convex compact
  (because $\optdomain$ and $G$ compact), Sion's
  minimax theorem~\cite{Sion58} implies
  \begin{align*}
    \inf_{\lambda \succ 0}
    \sup_{\optvar \in \optdomain, g \in G}
    \left\{ \lambda^\top \optvar^2 +
    \left(\frac{1}{\lambda}\right)^\top g^2\right\}
    & = \inf_{\lambda \succ 0}
    \sup_{\tau \in \squaredhull(\optdomain),
      w \in \squaredhull(G)}
    \left\{ \lambda^\top \tau +
    \left(\frac{1}{\lambda}\right)^\top w\right\}
    \\
    & =
    \sup_{\tau \in \squaredhull(\optdomain),
      w \in \squaredhull(G)}
    \inf_{\lambda \succ 0}
    \left\{\lambda^\top \tau + \left(\frac{1}{\lambda}\right)^\top w \right\}.
  \end{align*}
  Replacing $\tau$ with $\optvar^2$ and $w$ with $v^2$ gives the result.
\end{proof}

Proposition~\ref{prop:minimax-thm} provides a powerful hammer
for diagonally scaled Euclidean algorithms, as we
can choose an optimal scaling for any \emph{fixed} pair $\optvar, g$, taking
a worst case over such pairs:
\begin{corollary}
  \label{cor:ub} Let $\optdomain$ be a convex
  compact set and $\gamma$ a norm. Then
  \begin{equation*}
    \minimaxr(\optdomain, \gamma)
    \leq \frac{1}{\sqrt{k}}
    \sup_{g \in \qhull(\ball{\gamma}{0}{1})}
    \sup_{\optvar\in \qhull(\optdomain)}
    \optvar^\top g,
  \end{equation*}
  and \dscaled achieve this regret.
\end{corollary}
\begin{proof}
  Apply Proposition~\ref{prop:minimax-thm}
  to the regret~\eqref{eq:adagrad-regret}
  and use $\inf_{\lambda > 0} a\lambda + b / \lambda
  = 2\sqrt{ab}$ for $a, b \ge 0$.
\end{proof}

Corollary~\ref{cor:ub} allows concrete upper bounds on minimax risk and
regret, and it is suggestive that the quadratic hulls of (respectively) the
comparator class $\optdomain$ and subgradient sets relate essentially to
convergence rates, similarly to the case with Gaussian sequence models.
The remainder of the paper develops the analogy of linear and nonlinear
updates in stochastic optimization problems with those in Gaussian sequence
models, highlighting when methods of linear type are minimax rate optimal,
and when more computational power---we \emph{require}
nonlinearity---is necessary.

\section{Minimax optimality and quadratically convex constraint sets}
\label{sec:qc}

We begin by providing lower bounds on the minimax risk and
matching upper bounds on the minimax regret of convex optimization over
quadratically convex constraint sets, where \dscaled achieve the regret
bounds.
While the analogy with the Gaussian sequence model is nearly
complete, in distinction to the work of~\citeauthor{DonohoLiMa90} (where
results depend solely on the constraints $\optdomain$, as in
Corollary~\ref{corollary:qc-gsm}), our results necessarily depend on the
geometry of the subdifferential. Consequently, we distinguish throughout
this section between quadratically and non-quadratically convex geometry of
the gradients.
To set the stage for our contributions, we begin with the classical case of
$\optdomain = \ball{p}{0}{1}$ with $p\in [2, \infty]$ (so that $\optdomain$
is quadratically convex) and norm $\gamma = \norms{\cdot}_r$ with $r \ge
1$. We then turn to arbitrary quadratically convex constraint sets and first
show results in the case of general quadratically convex norms on the
subgradients. We conclude the section by proving that, when the subgradients
do not lie in a quadratically convex set but lie in a weighted $\ell_r$ ball
(for $r\in[1, 2]$), \dscaled are still minimax rate optimal.

\subsection{A warm-up: $p$-norm constraint sets for $p \ge 2$}
\label{sec:warm-up}

Though the results for the basic case that $\optdomain$ is an
$\ell_p$-ball while the gradients belong to a different $\ell_r$-ball
are special cases of the theorems to come, the proofs
provide intuition for the later results.
We distinguish
between two cases depending on the value of $r$ in the gradient norm.  The case
that $r \in [1, 2]$ corresponds roughly to ``sparse'' gradients, while the case
$r \ge 2$ corresponds to harder problems with dense gradients. We provide
information theoretic proofs of the following two results in
Sec.~\ref{prf:prop-lp-ball-q12} and~\ref{prf:prop-lp-ball-q2infty},
respectively.
\begin{proposition}[Sparse gradients]
  \label{prop:lp-ball-q12}
  Let $\optdomain = \ball{p}{0}{1}$ with $p\geq 2$ and $\gamma(\cdot)
  = \norm{\cdot}_r$
  where $r \in [1, 2]$.
  Then
  \begin{equation*}
    1 \wedge \frac{n^{\half - \frac{1}{p}}}{\sqrt{k}}
    \lesssim \minimaxs(\optdomain, \gamma)
    \le \minimaxr(\optdomain, \gamma)
    \lesssim 1 \wedge \frac{n^{\half - \frac{1}{p}}}{\sqrt{k}}.
  \end{equation*}
\end{proposition}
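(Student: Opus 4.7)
I will prove matching upper and lower bounds, combining them with a trivial diameter argument to produce the $1\wedge$ form.

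\textbf{Upper bound.} Since $r \in [1,2]$, the norm inequality $\ltwo{g} \le \norm{g}_r$ gives $\mc{F}^{\norms{\cdot}_r, 1} \subset \mc{F}^{\norms{\cdot}_2, 1}$, so it suffices to upper bound $\minimaxr(\Theta, \norms{\cdot}_2)$. I would apply dual averaging~\eqref{eq:da} with $h(\theta) = \half \ltwo{\theta}^2$, which is $1$-strongly convex w.r.t.\ $\ltwo{\cdot}$. On $\Theta = \ball{p}{0}{1}$ with $p \ge 2$, the standard $\ell_p$-to-$\ell_2$ inequality yields $\ltwo{\theta} \le d^{1/2 - 1/p}$, and $\ltwo{g_i} \le 1$. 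Plugging into~\eqref{eqn:md-regret} and choosing $\stepsize \asymp d^{1/2 - 1/p}/\sqrt{n}$ gives $\minimaxr \lesssim d^{1/2 - 1/p}/\sqrt{n}$; the $1\wedge$ arises because, whenever the rate exceeds the $\gamma^*$-diameter of $\Theta$, outputting any fixed $\theta \in \Theta$ bounds the regret by a constant.

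\textbf{Lower bound.} I would use Assouad's method with linear objectives $\f(\theta, x) = \dotp{x}{\theta}$. For $v \in \{\pm 1\}^d$, define $P_v$ by drawing $J$ uniformly on $[d]$ and $\epsilon \in \{\pm 1\}$ with $\P(\epsilon = v_J \mid J) = \half + \tau$, then setting $X = \epsilon\, e_J$. By construction $\norm{X}_r = 1$ almost surely (so $\f \in \mc{F}^{\norms{\cdot}_r, 1}$), and $\E_{P_v}[X] = \tfrac{2\tau}{d} v$, so $\ff_{P_v}(\theta) = \tfrac{2\tau}{d}\dotp{v}{\theta}$ is minimized over $\Theta$ at $\theta_v^\star = -v/d^{1/p}$ with value $-2\tau d^{-1/p}$. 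An elementary $\ell_p$-ball calculation shows that for any $\what\theta \in \Theta$,
\begin{equation*}
  \ff_{P_v}(\what\theta) - \ff_{P_v}(\theta_v^\star) \ge \frac{2\tau}{d^{1+1/p}} \cdot \big|\{j : \sgn{\what\theta_j} \ne -v_j\}\big|,
\end{equation*}
where improper predictions $\what\theta \notin \Theta$ are absorbed into the $1\wedge$ bound (cf.\ the remark on unconstrained updates in Section~\ref{sec:prelim}). The per-coordinate KL between $P_v$ and a neighbor differing in one coordinate is $\lesssim \tau^2/d$---the $1/d$ dilution arising because only a $1/d$-fraction of samples inform any single coordinate---so by Assouad's lemma a constant fraction of coordinates must be misidentified in expectation whenever $n\tau^2/d \lesssim 1$. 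Setting $\tau \asymp \sqrt{d/n}$ (valid for $n \gtrsim d$, otherwise $1\wedge$ makes the bound trivial) yields $\minimaxs \gtrsim \tau d^{-1/p} = d^{1/2 - 1/p}/\sqrt{n}$.

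\textbf{Main obstacle.} The technically delicate step is the lower bound: the $1/d$ dilution factor in the per-coordinate KL exactly counteracts the $d$ factor that arises from summing coordinate-wise excess risk, yielding the $d^{1/2}$ numerator---while the $d^{-1/p}$ factor from the $\ell_p$-ball geometry produces the correct exponent $1/2 - 1/p$. Arranging the packing so that $\f \in \mc{F}^{\norms{\cdot}_r, 1}$ for every $r \in [1,2]$ simultaneously, and obtaining the clean Hamming-distance-to-risk decomposition that feeds into Assouad, is the heart of the argument.
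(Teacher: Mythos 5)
Your proposal is correct in substance and, modulo the final hypothesis-testing device, follows the same route as the paper: the hard instance is identical (linear losses $\f(\theta,x)=\dotp{x}{\theta}$ with $x$ a random signed coordinate spike whose sign bias encodes a hypercube vertex $v$; your bias $\tau$ is the paper's $\delta/2$), and the upper bound is the same Euclidean regret optimization using $\ltwo{\theta}\le d^{1/2-1/p}\norm{\theta}_p$ and $\ltwo{g}\le\norm{g}_r\le 1$. The one genuine difference is the reduction to testing: the paper proves Proposition~\ref{prop:lp-ball-q12} with Fano's method over a Gilbert--Varshamov packing of $\{\pm 1\}^d$, computing the separation $\asymp \delta d^{-1/p}$ directly from $\norm{v}_{p^\ast}+\norm{v'}_{p^\ast}-\norm{v+v'}_{p^\ast}$, whereas you use Assouad's method with a coordinate-wise Hamming separation---which is exactly the route the paper itself takes for the more general Theorem~\ref{th:qc-not-qc}, so your argument is essentially that proof specialized to $\ell_p$ balls. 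Assouad buys you the per-coordinate decomposition (via concavity of $t\mapsto t^{1-1/p}$) without constructing a packing; Fano plus the packing lets the paper control the KL pairwise over the whole hypercube, which is marginally more direct here, but the two yield the same rate. Two small caveats: your displayed separation constant $2\tau d^{-1-1/p}$ is slightly too strong as stated (the tangent-line/concavity bound produces an extra factor $1-1/p\ge\half$, which is all you need), and the parenthetical ``otherwise $1\wedge$ makes the bound trivial'' is not an argument---when $n\lesssim d$ your construction forces capping $\tau$ at a constant and the capped bound does not obviously match the stated rate---but the paper's own proof is equally cavalier in that regime (it sets $\delta=\sqrt{d/(48n)}$ without verifying $\delta\le\half$), so this does not distinguish your proof from the paper's.
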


\begin{proposition}[Dense gradients]
  \label{prop:lp-ball-q2infty} Let
  $\optdomain = \ball{p}{0}{1}$ with $p\geq 2$ and $\gamma(\cdot) = \|\cdot\|_r$
  with $r \geq 2$. Then
  \begin{equation*}
    1 \wedge \frac{n^{\half - \frac{1}{p}} n^{\half - \frac{1}{r}}}{\sqrt{k}}
    \lesssim \minimaxs(\optdomain, \gamma)
    \le \minimaxr(\optdomain, \gamma)
    \lesssim 1 \wedge
    \frac{n^{\half - \frac{1}{p}} n^{\half - \frac{1}{r}}}{\sqrt{k}}.
  \end{equation*}
\end{proposition}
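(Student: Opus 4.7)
My plan is matching upper and lower bounds: the upper bound by vanilla Euclidean stochastic gradient descent, the lower bound by a Bayesian argument on a piecewise-linear instance indexed by a hypercube $V \in \{\pm 1\}^d$. For the upper bound I would take $h(\theta) = \half \ltwo{\theta}^2$ (which is $1$-strongly convex w.r.t.\ $\ltwo{\cdot}$) in the dual averaging update~\eqref{eq:da}, so~\eqref{eqn:md-regret} gives $\regret_n(\theta) \le \ltwo{\theta}^2/(2\stepsize) + (\stepsize/2)\sum_i \ltwo{g_i}^2$. The two norm inequalities $\ltwo{\theta} \le d^{1/2-1/p}\norm{\theta}_p$ (using $p \ge 2$) and $\ltwo{g} \le d^{1/2-1/r}\norm{g}_r$ (using $r \ge 2$), combined with $\norm{\theta}_p \le 1$ and $\norm{g}_r \le 1$ and the stepsize $\stepsize \asymp d^{1/2-1/p}/(d^{1/2-1/r}\sqrt{n})$, yield an average regret $\lesssim d^{1/2-1/p} d^{1/2-1/r}/\sqrt{n}$; the trivial estimator $\hat\theta = 0$ handles the $1 \wedge$ cap in the regime where $n$ is too small for learning.

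\paragraph{Lower bound construction.} For the lower bound, I would let $V$ be uniform on $\{\pm 1\}^d$ and, conditional on $V$, let $X_1, \ldots, X_n \simiid P_V$, where under $P_V$ the coordinates $X_{ij} \in \{\pm 1\}$ are independent with $P(X_{ij} = V_j) = \half + \delta$ for a parameter $\delta > 0$ to be tuned. I would take the piecewise-linear loss
\[
\f(\theta, x) \defeq d^{-1/r}\lone{\theta - d^{-1/p}x};
\]
every subgradient has entries in $[-d^{-1/r}, d^{-1/r}]$, so $\f \in \F{\norm{\cdot}_r}{1}$, and the unique global minimizer of $\ff_{P_V}$ on $\R^d$ is $\theta^*_V \defeq d^{-1/p} V \in \Theta$. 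A one-coordinate computation establishes
\[
\ff_{P_V}(\theta) - \ff_{P_V}(\theta^*_V) = 2 d^{-1/r}\delta \sum_j (d^{-1/p} - V_j \theta_j)
\quad \text{for } \theta \in [-d^{-1/p}, d^{-1/p}]^d,
\]
with a strictly larger excess loss outside this box. Coordinate-wise clipping onto $[-d^{-1/p}, d^{-1/p}]$ therefore dominates any estimator in expected loss under every $P_V$, so I may restrict attention to $\hat\theta$ in this box.

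\paragraph{Bayesian analysis.} Against the uniform prior on $V$, the conditional Bayes risk becomes $2 d^{-1/r}\delta(d^{1-1/p} - \dotp{\mu(X^n)}{\hat\theta})$, where $\mu_j \defeq \E[V_j \mid X^n]$; it is minimized over the box at $\hat\theta = d^{-1/p}\sgn{\mu}$, yielding a Bayes risk of $2 d^{-1/r - 1/p}\delta(d - \E\lone{\mu})$. Because $X \mid V$ is a product, the posterior-mean coordinates $\mu_j$ are i.i.d.\ with explicit closed form $\mu_j = \tanh(S_j \cdot \mathrm{arctanh}(2\delta))$, where $S_j \defeq \sum_i X_{ij}$. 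Using $|\tanh t| \le |t|$ and the exact moment $\E S_j^2 = n(1 + 4(n-1)\delta^2)$, one gets $\E \mu_j^2 \lesssim n\delta^2$. Choosing $\delta = c/\sqrt{n}$ for a sufficiently small absolute constant $c$ forces $\E|\mu_j| \le \sqrt{\E\mu_j^2} \le 1/2$, so $\E\lone{\mu} \le d/2$, and the Bayes risk, which lower-bounds $\minimaxs$, is $\gtrsim d^{-1/r-1/p}\delta \cdot d \asymp d^{1/2-1/p}d^{1/2-1/r}/\sqrt{n}$, matching the upper bound.

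\paragraph{Main obstacle.} The technical subtlety is twofold. First, the clipping reduction needs a short case analysis verifying that pushing any $|\theta_j|$ past $d^{-1/p}$ strictly increases $\ff_{P_V}(\theta)$ for every $V$; this rests on the piecewise-linear structure of $\ell_1$ and does not hold for a general convex loss. Second, the choice of a piecewise-linear $\f$ rather than the natural linear construction $\f(\theta, x) = \dotp{x}{\theta}$ is essential: under a linear loss an improper $\hat\theta$ can drive the excess risk to $-\infty$, collapsing the Bayesian lower bound, whereas the present $\f$ is bounded below everywhere with interior minimizer $\theta^*_V \in \Theta$, keeping the Bayesian argument valid even with unconstrained $\hat\theta$.
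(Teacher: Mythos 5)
Your upper bound is essentially the paper's: Euclidean dual averaging/SGD with $h(\theta)=\half\ltwo{\theta}^2$, the two norm-comparison inequalities for $p,r\ge 2$, and a tuned stepsize. Your lower bound, however, takes a genuinely different and correct route. The paper uses linear losses $\f(\theta,x)=d^{-1/r}\theta^\top x$ with biased coordinate distributions indexed by a Gilbert--Varshamov packing of $\{\pm1\}^d$, and applies Fano's inequality through the optimization-to-testing reduction (Lemma~\ref{lemma:opt-to-est}), with the separation computed via dual norms and Lemma~\ref{lem:1-d} covering small $d$. You instead run an exact Bayes-risk computation over the full hypercube prior with the absolute-deviation loss $d^{-1/r}\lone{\theta-d^{-1/p}x}$: the interior minimizer $d^{-1/p}V\in\Theta$ plus the clipping reduction identify the Bayes estimator $d^{-1/p}\sgn{\mu}$ in closed form, and the explicit posterior mean $\mu_j=\tanh(S_j\,\mathrm{arctanh}(2\delta))$ with $\delta\asymp n^{-1/2}$ yields the rate $d^{1/2-1/p}d^{1/2-1/r}/\sqrt{n}$ with explicit constants. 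This is closer in spirit to the Assouad-style construction the paper uses for Theorem~\ref{th:qc-not-qc} than to its own proof of this proposition; what your route buys is the avoidance of packing arguments and mutual-information/KL bounds (and no separate small-$d$ case), at the cost of a longer hands-on calculation, while the paper's Fano route is shorter given its toolbox and shows that plain linear losses suffice—the testing reduction measures excess risk against $\inf_{\theta\in\Theta}$, so it is immune to the improper-estimator degeneracy you designed around (your concern is a fair justification of your construction, not a flaw in the paper's). One caveat: your claim that $\what{\theta}=0$ ``handles the $1\wedge$ cap'' is not right as stated, since the trivial estimator only gives the bound $\sup_{\theta\in\Theta}\gamma^*(\theta)=d^{1-1/p-1/r}$, not a constant; but this concerns only the $1\wedge$ truncation on the upper bound, which the paper's own proof likewise does not establish—both arguments really prove the $d^{1/2-1/p}d^{1/2-1/r}/\sqrt{n}$ rate on both sides, which is the substantive content (and on the lower-bound side the cap only weakens the claim, so your uncapped bound suffices).
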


\noindent
In both cases, the stochastic gradient method achieves the regret upper
bound via a straightforward application of the regret
bounds~\eqref{eqn:md-regret} with $h(\optvar) = \half \ltwo{\optvar}^2$ or
Corollary~\ref{cor:ub}; a method of linear type is optimal.

\subsubsection{Proof of Proposition~\ref{prop:lp-ball-q12}}
\label{prf:prop-lp-ball-q12}

We use the general information-theoretic framework of reduction from estimation
to testing presented in Section~\ref{sec:est-to-test} to prove the lower
bound.
Define the constant $c_p = n^{-1/p}$,
which satisfies $c_p \norm{\ones}_p = 1$ as $\norm{\ones}_p = n^{1/p}$.

\paragraph{Separation}
Consider the sample space $\statdomain = \{\pm e_j\}_{j\leq n}$ and
the function
\begin{equation*}
  \f(\optvar, \statval) \defeq \<|\statval|,
  | \optvar - c_p \statval| \>
  = \sum_{j = 1}^n |\statval_j| |\optvar_j - c_p \statval_j|
\end{equation*}
so $\f$
belongs to $\F{\gamma}{1}$
as $\partial_\optvar \f(\optvar, \statval) \subset
\cup_{j = 1}^n [-1, 1] \cdot e_j$ when $\statval \in \statdomain$.
Letting $\delta \in [0, 1/2]$ to be determined,
for $v \in \{\pm 1\}^n$, we define $P_v$ such that for $\statrv\sim P_v$ we
have
\begin{equation*}
  \statrv =
  \begin{cases}
    v_j e_j & ~~\mbox{with probability}~~ \frac{1+\delta}{2 n} \\
    - v_j e_j & ~~\mbox{with probability}~~ \frac{1-\delta}{2 n}.
  \end{cases}
\end{equation*}
We then have
\begin{equation*}
  \ff_\packval(\optvar)
  = \frac{1}{2n}
  \sum_{j = 1}^n (1 + \delta v_j) |x_j - c_p| + (1 - \delta \packval_j)
  |x_j + c_p|,
\end{equation*}
so $\argmin_\optvar \ff_\packval(\optvar)
= c_p \packval \in \ball{p}{0}{1}$,
whence
\begin{equation*}
  \ff_v^* \defeq
  \inf \ff_v = 
  \inf_{\optvar \in \optdomain} \ff_v(\optvar)
  = (1 - \delta) c_p.
\end{equation*}
where $p^\ast$ is such that $1/p + 1/p^\ast = 1$.
For $v, v' \in \{\pm 1\}^n$, we have
\begin{align*}
  \lefteqn{f_v(\optvar) + f_{v'}(\optvar)} \\
  & = \frac{1}{n}
  \sum_{j : \packval_j = \packval_j'}
  \left[(1 + \delta \packval_j)|\optvar_j - c_p|
    + (1 - \delta \packval_j) |\optvar_j + c_p|
    \right]
  +
  \frac{1}{n} \sum_{j : \packval_j \neq \packval'_j}
  \left[|\optvar_j - c_p| + |\optvar_j + c_p|\right],
\end{align*}
whose minimizer is $c_p (\packval + \packval') / 2 \in \ball{p}{0}{1}$.
The optimization distance then satisfies
\begin{equation*}
  \dopt(v,v',\optdomain)
  = \frac{2 c_p}{n} [ (n - \dham(v, v')) (1 - \delta)
    + \dham(v, v')]
  - 2 (1 - \delta) c_p
  = 2 c_p \delta \frac{\dham(v, v')}{n},
\end{equation*}
where $\dham(v, v')$ is the Hamming distance between $v$ and $v'$. The
Gilbert-Varshimov bound~\cite[Example 5.3]{Wainwright19} guarantees the
existence of a packing set $\packset \subset \{\pm 1\}^n$ of cardinality at
least $\exp(n/8)$ satisfying $\dham(\packval, \packval') \ge \frac{n}{4}$
for each $\packval \neq \packval' \in \packset$.
Then
\begin{equation*}
  \dopt(v, v',\optdomain) \ge \frac{c_p \delta}{2}
  = \half n^{-1/p} \delta
  ~~ \mbox{for~all~} v \neq v' \in \mc{V},
\end{equation*}
and applying Lemma~\ref{lemma:opt-to-est} yields
\begin{equation*}
  \minimaxs(\optdomain, \gamma) \ge \frac{\delta}{4}
  n^{-1/p}\inf_{\psi}\P(\psi(\statrv_1^k)\neq V).
\end{equation*}

\paragraph{Bounding the testing error} We bound the testing error with Fano's
inequality and via the trivial
upper bound $\mi{\statrv_1^k}{V}
\le \max_{\packval,\packval'} \dkl{P_\packval^k}{P_{\packval'}^k}$
on the mutual information~\cite[Ch.~14]{Wainwright19}.
Using the identity $\delta\log\frac{1+\delta}{1-\delta} \le 3\delta^2$ for
$\delta \le \half$,
we have $\dkl{P_\packval}{P_{\packval'}} \le 3 \delta^2$,
and so
Fano's inequality (Proposition~\ref{prop:fano}) and
our choice that $|\packset| \ge \exp(n/8)$ imply
\begin{equation*}
	\inf_{\psi}\P(\psi(\statrv_1^k) \neq V) \ge \left(1 -
	\frac{3k \delta^2 + \log 2}{n / 8}\right).
\end{equation*}
In the case that $n \ge 32 \log 2$, choosing $\delta = \sqrt{\frac{n}{48 k}}$
yields the desired lower bound, once
we recognize that $\minimaxs$ is non-increasing in $k$.
In the case that $n < 32 \log 2$, with
$\mc{F}^{n=1}$ as in Lemma~\ref{lem:1-d}, that any $1$-dimensional
optimization problem may be embedded into a $n$-dimensional problem
yields
\begin{equation*}
	\minimaxs(\optdomain, \gamma) \ge \minimaxs([-1, 1], \mc{F}^{n=1})
	\gtrsim \frac{1}{\sqrt{k}}.
\end{equation*}
This gives the lower bound for all $n\in\N$.

To conclude the proof, we establish an upper bound on the minimax regret.
Because $p \ge 2$, $\qhull(\optdomain) = \optdomain = \ball{p}{0}{1}$,
while for $1 \le r \le 2$, $\norm{g}_r \le \ltwo{g}$ and so
$\wb{G} \defeq \qhull(\ball{\gamma}{0}{1}) \subset \ball{2}{0}{1}$.
Then $\sup_{\optvar \in \optdomain}
\sup_{g \in \wb{G}} \optvar^\top g
\le \sup_{\optvar \in \optdomain} \ltwo{\optvar}
= n^{\half - \frac{1}{p}}$.

\subsubsection{Proof of Proposition~\ref{prop:lp-ball-q2infty}}
\label{prf:prop-lp-ball-q2infty}

The proof is similar to Proposition~\ref{prop:lp-ball-q12}, so we forego
some of the details, again letting $c_p = n^{-1/p}$ so that
$c_p \ones \in \ball{p}{0}{1}$.

\paragraph{Separation}
Let $\eta > 0$ to be determined, and consider the sample space
$\statdomain = \{\pm 1\}^n$ and objectives
\begin{equation*}
  \f(\optvar, \statval)
  \defeq \eta \lone{\optvar - c_p \statval}.
\end{equation*}
For $v \in \{\pm 1\}^n$, we define $P_v$ so
that $\statrv\sim P_v$ has independent coordinates satisfying
\begin{equation*}
  P_\packval(\statrv_j = \packval_j)
  = \frac{1 + \delta}{2},
  ~~
  P_\packval(\statrv_j = -\packval_j) = \frac{1 - \delta}{2}.
\end{equation*}
This yields
\begin{equation*}
  \ff_v(\optvar) = \frac{\eta (1 + \delta)}{2}
  \lone{\optvar - c_p v}
  + \frac{\eta (1 - \delta)}{2} \lone{\optvar + c_p v},
\end{equation*}
and $\argmin_\optvar \ff_v(\optvar) = c_p \packval \in \ball{p}{0}{1}$,
with $\ff_v^* = n c_p \eta(1 - \delta)$.
For $\packval \neq \packval'$, a quick calculation yields
\begin{equation*}
  (\ff_\packval + \ff_{\packval'})^*
  = \inf_{\optvar}   (\ff_\packval(\optvar) + \ff_{\packval'}(\optvar))
  = 2 \eta c_p \dham(\packval, \packval')
  + 2 \eta c_p (1 - \delta)(n - \dham(\packval, \packval')).
\end{equation*}
Then considering again the Gilbert-Varshimov packing $\mc{V} \subset \{\pm 1
\}^n$, we have
\begin{equation*}
  \dopt(v, v', \optdomain)
  = 2 \eta c_p \delta \dham(\packval, \packval')
  \ge \half \eta \cdot n^{1 - 1/p}
  ~~ \mbox{for~} \packval \neq \packval'.
\end{equation*}

\paragraph{Bounding the testing error} Note that
for $\delta \le \half$,
\begin{equation*}
  \dkl{P_v}{P_{v'}} = \sum_{j : \packval_j \neq \packval_j'}
  \delta\log\frac{1+\delta}{1-\delta} \le 3n \delta^2,
\end{equation*}
so $\mi{\statrv_1^k}{V} \le 3k n \delta^2$. For $\f$ to remain in
$\F{\gamma}{1}$, we must have that $\eta \norm{\ones}_r \leq 1$;
noting that $\norm{\ones}_r = n^{1/r}$, we choose $\eta =
n^{-1/r}$. In the case that $n \ge 32 \log 2$, taking $\delta =
1/\sqrt{48 k}$ yields the minimax lower-bound
\begin{equation*}
	\minimaxs(\optdomain, \gamma) \gtrsim
	\frac{n^{1 - \frac{1}{p}} n^{-\frac{1}{r}}}{\sqrt{k}} =
	\frac{n^{\frac{1}{2}-\frac{1}{p}} n^{\frac{1}{2}-\frac{1}{r}}}{\sqrt{k}}.
\end{equation*}
In the case that $n < 32\log 2$, we once again refer Lemma~\ref{lem:1-d},
which concludes the proof for the lower bound on the minimax stochastic
risk.

The upper bound again follows immediately via Corollary~\ref{cor:ub},
as both $\optdomain$ and $\ball{\gamma}{0}{1}$ are quadratically
convex, and for $q = \frac{p}{p - 1}$,
\begin{equation*}
  \sup_{\norm{\optvar}_p \le 1}
  \sup_{\norm{g}_r \le 1} \optvar^\top g
  = \sup_{\norm{g}_r \le 1} \norm{g}_q
  = n^{\frac{1}{q} - \frac{1}{r}}
  = n^{1 - \frac{1}{p} - \frac{1}{r}}.
\end{equation*}



\subsection{General quadratically convex constraints}\label{sec:general}

\newcommand{\rect}{\mathsf{Rec}}

We now turn to the more general case that $\optdomain$ is an arbitrary
orthosymmetric quadratically convex body.
Our techniques build out of the ideas of \citet{DonohoLiMa90} in Gaussian
sequence estimation, where, as in Section~\ref{sec:gsm} the largest
hyperrectangle in $\optdomain$ governs the performance of linear estimators.
As in the previous section, we divide our analysis into cases, depending on
the quadratic convexity of the gradient norm $\gamma$ (analogs of $r
\lessgtr 2$ in Propositions~\ref{prop:lp-ball-q12}
and~\ref{prop:lp-ball-q2infty}).
Our starting point is a general lower bound relying on rectangular
structures in the primal $\optdomain$ and dual gradient spaces.
For the
proposition, we use a specialization of the function
families~\eqref{eqn:func-family} to rectangular sets, where for $M \in
\R_+^n$ we define
\begin{equation*}
  \mc{F}^M \defeq \Big\{ \f : \R^n \times \statdomain \to \R
  \mid \mbox{for~all~} \optvar \in \R^n
  ~ \mbox{and} ~
  g \in \partial_\optvar \f(\optvar, \statval), ~
  \max_{j \le n} \frac{|g_j|}{M_j} \le 1 \Big\}.
\end{equation*}
Then \citet{DuchiJoMc13} provide the following lower bound.
\begin{proposition}[\cite{DuchiJoMc13}, Proposition 1]
  \label{prop:lb}
  Let $M \in \R_+^n$ and $\mc{F}^M$ be as above.
  Let $a \in \R_+^n$ and assume the hyperrectangular
  containment $\prod_{j=1}^n [-a_j, a_j] \subset \optdomain$. Then
  \begin{equation*}
    \minimaxs(\optdomain, \mc{F}^M)
    \geq \frac{1}{8\sqrt{k \log 3}}
    \sum_{j=1}^n M_j a_j.
  \end{equation*}
\end{proposition}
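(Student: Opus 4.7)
The strategy is an Assouad-style reduction: the hyperrectangular containment $\prod_{j=1}^d [-a_j, a_j] \subset \Theta$ lets me decompose a $d$-dimensional lower bound into $d$ independent one-dimensional sign-detection problems, each contributing $M_j a_j \delta$ to the error when failed.

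I would first build a $2^d$-parameter family of hard instances. With $\mc{X} = \{-1, +1\}^d$ and a parameter $\delta \in (0, 1)$ to be chosen later, let $P_v$ be the product Bernoulli law satisfying $\Pr_{P_v}(X_j = +1) = (1 + v_j \delta)/2$ for each $v \in \{-1, +1\}^d$. Take the loss
\begin{equation*}
  \f(\theta, x) \defeq \sum_{j=1}^d M_j \left| \theta_j + x_j a_j \right|,
\end{equation*}
whose partial subgradients satisfy $|g_j| \leq M_j$, so $\f \in \mc{F}^M$. A one-dimensional computation gives $\ff_{P_v}(\theta) = \sum_j M_j q_j(\theta_j)$, where $q_j$ is piecewise linear, equal to $a_j + v_j\delta t$ on $[-a_j, a_j]$ and extended with slopes $\pm 1$ outside; in particular each $q_j$ is minimized on all of $\R$ at $t = -v_j a_j$ with value $a_j(1-\delta)$. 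This is the crux of the construction: because the global minimum of $q_j$ lies at the rectangle vertex, the infimum of $\ff_{P_v}$ over $\Theta$ coincides with its minimum over the rectangle regardless of how far $\Theta$ extends beyond. A purely linear choice of $\f$ would fail here.

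Next, a case analysis on $q_j$ gives the per-coordinate excess-risk bound
\begin{equation*}
  \ff_{P_v}(\theta) - \inf_{\theta' \in \Theta} \ff_{P_v}(\theta')
  \ge \sum_{j=1}^d M_j \delta a_j \indic{\sign(\theta_j) = v_j}.
\end{equation*}
Averaging over $v$ uniform on $\{-1, +1\}^d$ and applying a coordinate-wise Le~Cam/Assouad reduction---legitimate because the marginal of the $j$-th coordinates of the $n$ samples under $P_v^{\otimes n}$ depends only on $v_j$---yields
\begin{equation*}
  \minimaxs(\Theta, \mc{F}^M)
  \ge \delta \sum_{j=1}^d M_j a_j \cdot \tfrac{1}{2}\left(1 - \tvnorm{P_+^{\otimes n} - P_-^{\otimes n}}\right),
\end{equation*}
where $P_\pm = \mathsf{Bern}((1 \pm \delta)/2)$.

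Finally I would bound the total variation by Pinsker's inequality and the explicit computation $\kl{P_+}{P_-} = \delta \log\frac{1+\delta}{1-\delta} \le 2\delta^2/(1-\delta^2)$, giving $\tvnorm{P_+^{\otimes n} - P_-^{\otimes n}} \le \sqrt{n\delta^2/(1-\delta^2)}$. Choosing $\delta$ of order $1/\sqrt{n}$---specifically small enough that this TV factor is bounded by $1/2$, with the exact bookkeeping yielding $\delta^2 \leq 1/(4n\log 3)$---produces the claimed bound $\frac{1}{8\sqrt{n\log 3}} \sum_j M_j a_j$. The main subtlety is the minimization argument in the first step, ensuring that the minimum of $\ff_{P_v}$ over the unconstrained $\Theta$ is attained at a rectangle vertex; this is exactly why the construction uses $|\theta_j + x_j a_j|$ rather than a simpler bilinear form in $\theta$ and $x$.
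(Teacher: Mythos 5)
Your construction is correct and is essentially the standard route: the paper itself does not reprove this proposition (it cites Duchi et al.), but its own lower-bound machinery---the generalized Assouad lemma with Hamming separation and the absolute-value losses used in the proof of Theorem~\ref{th:qc-not-qc}---is exactly what you deploy, the only cosmetic differences being that you sample all coordinates as independent Bernoullis (appropriate here, since $\mc{F}^M$ permits dense gradients, whereas the paper's related construction samples single coordinates $\pm e_j$) and that your separation indicator has the sign convention flipped, a harmless relabeling $v \mapsto -v$. One bookkeeping nit: with your inequality $\delta\log\frac{1+\delta}{1-\delta} \le \frac{2\delta^2}{1-\delta^2}$, the choice $\delta^2 = \frac{1}{4 n \log 3}$ makes the total variation slightly exceed $\tfrac{1}{2}$ for small $n$ (e.g.\ $n \le 2$), so the stated constant is missed by a few percent; using instead $\delta\log\frac{1+\delta}{1-\delta} \le 2\log(3)\,\delta^2$ for $\delta \in [0, \tfrac{1}{2}]$ (the bound the paper uses in Appendix~\ref{sec:proof-qc-not-qc}) gives total variation exactly at most $\tfrac{1}{2}$ when $\delta = \frac{1}{2\sqrt{n \log 3}}$ and lands precisely on $\frac{1}{8\sqrt{n\log 3}}\sum_j M_j a_j$.
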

\noindent
An immediate extension of this result provides lower bounds on the minimax
stochastic risk for all orthosymmetric norms $\gamma$, by which we mean
$\gamma(\sigma \odot v) = \gamma(v)$ for all sign vectors $\sigma$, and
convex sets $\optdomain$ without any restriction on qudaratic convexity.
(It will be the matching upper bounds that require this.)
\begin{corollary}
  \label{corollary:lb}
  Let $\optdomain \subset \R^n$ be an orthosymmetric convex set
  and $\gamma$ an orthosymmetric norm.
  Then
  \begin{equation*}
    \minimaxs(\optdomain, \gamma)
    \ge \frac{1}{8 \sqrt{k \log 3}} \sup_{\optvar \in \optdomain}
    \gamma^*(\optvar).
  \end{equation*}
\end{corollary}
\begin{proof}
  Define the hyperrectangle $\rect(\optvar) \defeq \prod_{j \le n}
  [-|\optvar_j|, |\optvar_j|]$, so that $\optdomain \supset \rect(\optvar)$
  for all $\optvar \in \optdomain$ by orthosymmetry.
  Additionally,
  recalling the notation~\eqref{eqn:func-family} of $\mc{F}^{\gamma,1}$ and
  $\mc{F}^M$, if $M \in \R^n_+$ satisfies $\gamma(M) \le 1$ then, by
  orthosymmetry of $\gamma$, $\mc{F}^{\gamma,1} \supset \mc{F}^M$.  Thus
  \begin{equation*}
    \minimaxs(\optdomain, \gamma) \ge
    \minimaxs(\rect(\optvar), \gamma) \ge \minimaxs(\rect(\optvar), \mc{F}^M)
    \ge \frac{1}{8\sqrt{k \log 3}} \sum_{j \le n} |\optvar_j|
    M_j
  \end{equation*}
  for all $M \in \ball{\gamma}{0}{1} \cap \R_+^n$ and $\optvar \in
  \optdomain$.
  Taking a supremum over $M \in \ball{\gamma}{0}{1}$ and
  $\optvar \in \optdomain$, we have
  \begin{equation*}
    \minimaxs(\optdomain,
    \gamma) \ge \frac{1}{8\sqrt{k \log 3}} \sup_{\optvar \in \optdomain}
    \sup_{\gamma(M) \le 1} \optvar^\top M = \frac{1}{8\sqrt{k \log 3}}
    \sup_{\optvar \in \optdomain} \gamma^*(\optvar)
  \end{equation*}
  as desired.
\end{proof}



\subsubsection{Orthosymmetric and quadratically convex gradient norms}

We now provide lower bounds on minimax risk
complementary to Corollary~\ref{cor:ub}, focusing first on the case
that the gradient norm $\gamma$ is quadratically convex.


\begin{assumption}\label{ass:gamma-qc}
  The norm $\gamma$ is orthosymmetric and quadratically convex,
  meaning $\gamma(\sigma \odot v) = \gamma(v)$ for all $\sigma \in \{\pm 1\}^n$
  and $\ball{\gamma}{0}{1}$ is
  quadratically convex.
\end{assumption}

With this, we have the following theorem, which shows that diagonally-scaled
gradient methods are minimax rate optimal, and that the constants are sharp
up to a factor of $9$, whenever the gradient norms are quadratically convex.
While the constant $9$ is looser than that \citet{DonohoLiMa90} provide for
Gaussian sequence models, this theorem highlights the essential structural
similarity between the sequence model case and stochastic optimization
methods.
\begin{theorem}
  \label{th:qc-qc}
  Let Assumption~\ref{ass:gamma-qc} hold and let $\optdomain$ be
  quadratically convex, orthosymmetric, and compact.  Then
  \begin{equation*}
    \frac{1}{8 \sqrt{\log 3}}
    \frac{1}{\sqrt{k}}
    \sup_{\optvar \in \optdomain} \gamma^*(\optvar) \le \minimaxs(\optdomain, \gamma)
    \le \minimaxr(\optdomain, \gamma) \le \frac{1}{\sqrt{k}}
    \sup_{\optvar \in \optdomain} \gamma^*(\optvar).
  \end{equation*}
  There exists $\lambda^\ast \in \R^n_+$ such that \dscaled with
  $\lambda^\ast$ achieve this rate.
\end{theorem}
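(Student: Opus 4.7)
The plan is to match lower and upper bounds on $\minimaxs$ and $\minimaxr$ through the dual norm $\gamma^*$, leveraging the two assumed symmetries: orthosymmetry and quadratic convexity.

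For the upper bound, I would invoke Corollary~\ref{cor:ub} directly. Since Assumption~\ref{ass:gamma-qc} states that $\ball{\gamma}{0}{1}$ is itself quadratically convex, we have $\qhull(\ball{\gamma}{0}{1}) = \ball{\gamma}{0}{1}$, so
\[
  \sup_{g\in\qhull(\ball{\gamma}{0}{1}),\, \theta\in\Theta} \theta^\top g
  \;=\; \sup_{\theta\in\Theta}\sup_{\gamma(g)\le 1} \theta^\top g
  \;=\; \sup_{\theta\in\Theta} \gamma^\ast(\theta),
\]
by definition of the dual norm. Corollary~\ref{cor:ub} then gives the upper bound and, simultaneously, the existence of an optimal diagonal preconditioner $\lambda^\ast$ (the infimum in Proposition~\ref{prop:minimax-thm} is attained on any sufficiently large compact positive rectangle, since the objective is coercive as any $\lambda_j \to 0$ or $\infty$).

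For the lower bound I would feed Proposition~\ref{prop:lb} a carefully chosen hyperrectangle. Let $\theta^\ast \in \Theta$ achieve $\sup_{\theta\in\Theta}\gamma^\ast(\theta)$; orthosymmetry of $\Theta$ and of $\gamma^\ast$ let me replace $\theta^\ast$ with $|\theta^\ast|$, so assume $\theta^\ast \succeq 0$. Next pick $M^\ast$ attaining the supremum in $\gamma^\ast(\theta^\ast) = \sup_{\gamma(M)\le 1} M^\top \theta^\ast$; orthosymmetry of $\gamma$ implies $\gamma(|M^\ast|) = \gamma(M^\ast) \le 1$, and since $\theta^\ast \succeq 0$ we have $\langle |M^\ast|,\theta^\ast\rangle \ge \langle M^\ast,\theta^\ast\rangle$, so I may also take $M^\ast \succeq 0$. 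Orthosymmetry of $\Theta$ ensures every signed variant $\sigma\odot\theta^\ast$ lies in $\Theta$, and convexity of $\Theta$ then gives $\prod_{j}[-\theta_j^\ast,\theta_j^\ast] \subset \Theta$, so Proposition~\ref{prop:lb} applies and yields
\[
  \minimaxs(\Theta, \mathcal{F}^{M^\ast}) \;\ge\; \frac{1}{8\sqrt{n\log 3}} \sum_j M_j^\ast \theta_j^\ast \;=\; \frac{1}{8\sqrt{n\log 3}}\, \gamma^\ast(\theta^\ast).
\]

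The one step that requires care—and is the main ingredient beyond the two propositions already established—is showing $\mathcal{F}^{M^\ast} \subset \mathcal{F}^{\gamma,1}$, i.e., that $|g_j|\le M_j^\ast$ for all $j$ forces $\gamma(g)\le 1$. This amounts to a monotonicity property of any orthosymmetric norm, which I would prove as a small lemma: if $|g_j|\le |g'_j|$ coordinatewise and $\gamma(g')\le 1$, then $\gamma(g)\le 1$. The argument is that for each coordinate $j$, orthosymmetry puts both $g'$ and its $j$-th sign-flip in $\ball{\gamma}{0}{1}$, and convexity then places the entire segment between them—hence every point $(g'_1,\ldots,t,\ldots,g'_d)$ with $|t|\le |g_j'|$—inside $\ball{\gamma}{0}{1}$; iterating over coordinates shrinks $g'$ to $g$ while staying in the ball. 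Applied to $g' = M^\ast$, this gives the desired containment $\mathcal{F}^{M^\ast}\subset \mathcal{F}^{\gamma,1}$, completing the lower bound $\minimaxs(\Theta,\gamma) \ge \frac{1}{8\sqrt{n\log 3}}\sup_{\theta\in\Theta}\gamma^\ast(\theta)$, and the theorem follows.
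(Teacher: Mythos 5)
Your proposal is correct and takes essentially the same route as the paper: the upper bound via Corollary~\ref{cor:ub} together with $\qhull(\ball{\gamma}{0}{1}) = \ball{\gamma}{0}{1}$, and the lower bound via Proposition~\ref{prop:lb} applied to the hyperrectangle $\prod_j[-|\theta_j|,|\theta_j|] \subset \Theta$ with a nonnegative $M$ in the $\gamma$-unit ball. Your explicit coordinatewise-monotonicity lemma for orthosymmetric norms is precisely the fact the paper invokes implicitly when asserting $\mc{F}^M \subset \mc{F}^{\gamma,1}$, so it is a welcome clarification rather than a different argument.
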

\begin{proof}
  For the upper bound, we use Corollary~\ref{cor:ub}. Because
  $\ball{\gamma}{0}{1}$ is quadratically convex, we have
  $\qhull(\ball{\gamma}{0}{1}) = \ball{\gamma}{0}{1}$, so that $\sup_{g \in
    \qhull(\ball{\gamma}{0}{1})} \optvar^\top g = \gamma^*(\optvar)$.
  Then note that $\qhull(\optdomain) = \optdomain$ as well.
  The lower bound follows immediately from Corollary~\ref{corollary:lb}.
\end{proof}


\newcommand{\rescale}{\textup{res}} 

\subsubsection{Arbitrary gradient norms}

When the norm $\gamma$ on the gradients defines a non-quadratically convex norm
ball $\ball{\gamma}{0}{1}$---for example, when the gradients belong to an
$\ell_r$-norm ball for $r\in[1, 2]$---our results become less
general. Nonetheless,
when $\gamma$ is a weighted $\ell_r$-norm ball (for $r \in [1, 2]$),
\dscaled remain minimax rate optimal, as Corollary~\ref{cor:qc-wlp} will show.
When the norms $\gamma$ are arbitrary
we use the rescaled
vector $\rescale(\optvar, \gamma)
\defeq (\optvar_j / \gamma(e_j))_{j = 1}^n$, where $e_j$ are the
standard basis vectors:


\begin{theorem}
  \label{th:qc-not-qc}
  Let $\optdomain$ be an orthosymmetric, quadratically convex, convex and
  compact set and $\gamma$ an arbitrary norm. For any $d \in \N$,
  \begin{align*}
      \frac{1}{8 \sqrt{k \log 3}}
      \left(1 - \frac{d}{k \log 3} \right)
      \sup_{\optvar\in\optdomain,
        \norm{\optvar}_0 \le d}
      \ltwo{\rescale(\optvar, \gamma)}
      & \le \minimaxs(\optdomain, \gamma) \\
      & \le \minimaxr(\optdomain, \gamma) \le
      \frac{1}{\sqrt{k}}\sup_{\optvar\in\optdomain} \sup_{g\in
        \qhull(\ball{\gamma}{0}{1})} \dotp{\optvar}{g}.
  \end{align*}
\end{theorem}
\noindent
Corollary~\ref{cor:ub} gives the upper bound in the theorem.
The lower
bound consists of an application of Assouad's method
(Lemma~\ref{lem:assouad}), but, in parallel to the warm-up examples
(Propositions \ref{prop:lp-ball-q12} and \ref{prop:lp-ball-q2infty}), we
construct well-separated functions with ``sparse'' gradients. See
Appendix~\ref{sec:proof-qc-not-qc} for a proof.

We can develop a corollary of this result when the norm $\gamma$ is a
weighted-$\ell_r$ norm for $r\in[1, 2]$. While these do not induce
quadratically convex norm balls,
the previous theorem still guarantees that \dscaled are
minimax rate optimal.

\begin{corollary}
  \label{cor:qc-wlp}
  Let the conditions of Theorem~\ref{th:qc-not-qc} hold and assume that
  $\gamma(g) = \norm{\beta \odot g}_r$ with $r\in[1, 2]$, $\beta_j > 0$ and
  $(\beta \odot g)_j = \beta_j g_j$. Then for $k \ge 2n$,
  \begin{equation*}
    \frac{1}{16} \frac{1}{\sqrt{k}}
    \sup_{\optvar\in\optdomain}
    \ltwo{\rescale(\optvar, \gamma)}
    \le \minimaxs(\optdomain, \gamma)
    \le \minimaxr(\optdomain, \gamma) \le
    \frac{1}{\sqrt{k}}\sup_{\optvar\in\optdomain}
    \ltwo{\rescale(\optvar, \gamma)}.
  \end{equation*}
  There exists $\lambda^\ast \in \R^n_+$ such that \dscaled with
  $\lambda^\ast$ achieve this rate.
\end{corollary}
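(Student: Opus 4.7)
My plan is to derive both bounds directly from the preceding results in this section: the upper bound follows from Corollary~\ref{cor:ub} after a short quadratic-hull containment argument, and the lower bound follows from Theorem~\ref{th:qc-not-qc} with a convenient choice of the sparsity parameter $k$.

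For the upper bound, note that $\gamma(e_j) = \norm{\beta \odot e_j}_r = \beta_j$, so the claim reads $\minimaxr(\Theta,\gamma) \le \frac{1}{\sqrt{n}} \sup_{\theta\in\Theta} \ltwo{\theta/\beta}$. I would introduce the ellipsoid $E \defeq \{g \in \R^d : \ltwo{\beta \odot g} \le 1\}$ and observe two facts. First, $E$ is convex and quadratically convex: its square is the simplex-type set $\{u \succeq 0 : \sum_j \beta_j^2 u_j \le 1\}$, which is convex. Second, $E \supseteq \ball{\gamma}{0}{1}$, since $\norm{\cdot}_r \ge \norm{\cdot}_2$ on $\R^d$ whenever $r \in [1,2]$. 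Minimality of $\qhull$ then gives $\qhull(\ball{\gamma}{0}{1}) \subseteq E$, and Corollary~\ref{cor:ub} yields
\[
  \minimaxr(\Theta, \gamma)
  \le \frac{1}{\sqrt{n}} \sup_{\theta\in\Theta,\, g\in\qhull(\ball{\gamma}{0}{1})} \theta^\top g
  \le \frac{1}{\sqrt{n}} \sup_{\theta\in\Theta,\, g\in E} \theta^\top g
  = \frac{1}{\sqrt{n}} \sup_{\theta\in\Theta} \ltwo{\theta/\beta}.
\]
Since Corollary~\ref{cor:ub} produces a diagonally-scaled method attaining the tighter $\qhull$-supremum, the same $\lambda^\ast$ attains the advertised rate.

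For the lower bound, I would invoke Theorem~\ref{th:qc-not-qc} with the choice $k = d$, which makes the constraint $\lzero{\theta} \le k$ vacuous and reduces the right-hand supremum to $\sup_{\theta \in \Theta} \ltwo{\theta/\gamma(e_\cdot)}$. Under the hypothesis $n \ge 2d$, the prefactor satisfies $1 - \frac{k}{n \log 3} \ge 1 - \frac{1}{2\log 3} > \frac{1}{2}$, and a short numerical check confirms $\frac{1 - 1/(2\log 3)}{8\sqrt{\log 3}} \ge \frac{1}{16}$, giving the desired constant.

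The only nontrivial content is the quadratic-hull containment $\qhull(\ball{\gamma}{0}{1}) \subseteq E$; the remainder is invocation of prior results and bookkeeping of constants. I do not anticipate any substantive obstacle—crucially, no sparse Assouad-type construction is needed, because the assumption $n \ge 2d$ allows the choice $k = d$ and entirely avoids the sparsity restriction present in Theorem~\ref{th:qc-not-qc}.
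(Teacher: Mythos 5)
Your proposal is correct and matches the paper's own argument: the upper bound via Corollary~\ref{cor:ub} after identifying the quadratic hull of the weighted $\ell_r$ ball inside the weighted $\ell_2$ ball (the paper asserts equality, but your one-sided containment is all that is needed), and the lower bound by taking $k = d$ in Theorem~\ref{th:qc-not-qc} and using $n \ge 2d$ to check that $\frac{1 - 1/(2\log 3)}{8\sqrt{\log 3}} \ge \frac{1}{16}$. Your constant verification and the duality computation $\sup_{\ltwos{\beta \odot g} \le 1} \theta^\top g = \ltwos{\theta/\beta}$ are both accurate.
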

\noindent
A minor modification of Theorem~\ref{th:qc-not-qc} gives the lower
bound, while we obtain
the upper bound by noting that the quadratic hull of a weighted-$\ell_r$ norm
ball for $r\in[1, 2]$ is the weighted-$\ell_2$ norm ball. The dual norm of
$\gamma(g) = \norm{\beta \odot g}_2$ being $\gamma^\ast(g) = \norm{g /
  \beta}_2$, the upper bound holds by duality. See
Appendix~\ref{sec:proof-cor-qc-not-qc} for the (short) proof.

Theorem~\ref{th:qc-qc} and Corollary~\ref{cor:qc-wlp} show that for a large
collection of norms $\gamma$ on the gradients, \dscaled areminimax rate
optimal.
Arguing that \dscaled are minimax rate optimal when $\gamma$ is
neither a weighted-$\ell_r$ norm nor induces a quadratically convex unit
ball remains an open question, though weighted-$\ell_r$ norms for $r \in [1,
  \infty]$ cover the majority of practical applications of stochastic
gradient methods.

\section{Beyond quadratic convexity: the necessity of non-linear methods}
\label{sec:not-qc}


For $\optdomain \subset \R^n$ quadratically convex, the results in
Section~\ref{sec:qc} show that methods of linear type achieve optimal rates
of convergence.  When the constraint set is not quadratically convex, it is
unclear whether methods of linear type are sufficient to achieve optimal
rates.
As we now show, they are not:
we exhibit collections of problem instances in which the constraint
sets are orthosymmetric convex bodies
but not quadratically convex, and where methods of linear type must have
regret at least a factor $\sqrt{n / \log n}$ worse than the minimax optimal
rate, which (non-linear) mirror descent with appropriate distance generating
function achieves.
We also develop more general results to highlight the way in which
the quadratic hull of the underlying constraint set $\optdomain$
necessarily characterizes the regret of \eucgrad, which allows
for a more explicit delineation of those sets $\optdomain$
for which nonlinear methods are necessary:
when $\sup_{\optvar \in \optdomain} \lone{\optvar}$ is much smaller than
$\sup_{\optvar \in \qhull(\optdomain)} \lone{\optvar}$.

To construct these problem instances, we first turn to simple
non-quadratically convex constraint sets: $\ell_p$ balls for $p\in[1,
  2]$. We measure subgradient norms in the dual $\ell_{p^*}$ norm, $p^* =
\frac{p}{p - 1}$.
Our analysis consists of two steps: we first prove sharp
minimax rates on these problem instances and show that mirror descent with
the right (non-linear) distance generating function is minimax rate
optimal. These results extend those of~\citet{AgarwalBaRaWa12}, who provide
matching lower and upper bounds for $p \geq 1 + c$ for a fixed numerical
constant $c > 0$. In contrast, we prove sharp minimax rates for all $p \geq
1$. To precisely characterize the gap between linear and non-linear methods,
we show that for any linear pre-conditioner, we can exhibit functions for
which the regret of \eucgrad is nearly the simple upper regret bound of
standard gradient methods, Eq.~\eqref{eqn:md-regret} with $h(\optvar) =
\half \ltwo{\optvar}^2$. Thus, when $p$ is very close to $2$ (nearly
quadratically convex), the gap remains within a constant factor, whereas
when $p$ is close to $1$, the gap can be as large as $\sqrt{n / \log n}$.

\subsection{Minimax rates for $p$-norm constraint sets and general
  convex bodies}

For $p\in[1,2]$, we consider the constraint set $\optdomain = \ball{p}{0}{1}
\subset \R^n$ and bound gradients with norm $\gamma =
\|\cdot\|_{p^\ast}$. We begin by proving sharp minimax rates on this
collection of problems and show that, in these cases, non-linear mirror
descent is minimax optimal.
\begin{theorem}
  \label{th:sharp-md}
  Let $p \in [1, 2]$, $\optdomain = \ball{p}{0}{1} \subset \R^n$ and $\gamma
  = \|\cdot\|_{p^\ast}$.
  \begin{enumerate}[(i)]
  \item If $1 \leq p \leq 1 + 1 / \log(2n)$, then
    \begin{equation*}
      1 \wedge \sqrt{\frac{\log(2n)}{k}}
      \lesssim \minimaxs(\optdomain, \gamma)
      \le \minimaxr(\optdomain, \gamma)
      \lesssim
      1 \wedge \sqrt{\frac{\log(2n)}{k}}.
    \end{equation*}
  \item If $1 + 1/\log(2n) < p \le 2$, then
    \begin{equation*}
      1 \wedge \sqrt{\frac{1}{k (p-1)}}
      \lesssim \minimaxs(\optdomain, \gamma)
      \le
      \minimaxr(\optdomain, \gamma) \lesssim
      1 \wedge \sqrt{\frac{1}{k
          (p-1)}}
    \end{equation*}
  \end{enumerate}
  In both cases,
  mirror descent~\eqref{eqn:smd} with distance generating function
  $h(\optvar) \defeq \frac{1}{2(a-1)}\norm{\optvar}_a^2$ for $a =
  \max\{1 + \frac{1}{\log(2n)}, p\}$ achieves the optimal rate.
\end{theorem}
\noindent
The upper bound essentially follows from
Proposition~\ref{prop:rate-md}, and the lower bound uses
reductions from estimation to testing.
See Appendix~\ref{prf:th-sharp-md} for a proof.

\newcommand{\effdim}{\textup{effdim}}

We also provide a few results extending to
potentially infinite-dimensional spaces.
To reduce complexity, we focus on the case that the gradients are
bounded in $\ell_\infty$-norm---the ``most'' quadratically convex set---to
make dependence on $\optdomain$ the clearest. In the sequel it will
sometimes be useful to consider sequence space $\R^\N$, so we give a result
that allows the infinite dimensional containment $\optdomain \subset \R^\N$;
in this case we consider domains $\optdomain$ that are appropriately compact
for the $\ell_1$-norm.
A sufficient condition for the results is that
$\optdomain$ have
finite \emph{effective dimension},
which we define by
\begin{equation}
  \label{eqn:effective-dimension}
  \effdim(\optdomain)
  \defeq \inf_{\beta \ge 0} \left\{e^{1 / \beta}
  \mid \sup_{\optvar \in \optdomain}
  \sum_{j = 1}^\infty j^\beta |\optvar_j|
  \le e \sup_{\optvar \in \optdomain} \lone{\optvar}
  \right\},
\end{equation}
where $\effdim(\optdomain) = +\infty$
if only $\beta = 0$ satisfies the inequality.
It is immediate that if $\optdomain \subset \R^n$, then
$\effdim(\optdomain) \le n$
because $n^{1/\log n} = e$.
The next observation summarizes a few sufficient conditions for
the effective dimension~\eqref{eqn:effective-dimension}
to be finite.
We include the proof in
Appendix~\ref{sec:proof-self-similar} for completeness.


\begin{observation}
  \label{observation:self-similar}
  The following conditions are sufficient to guarantee $\effdim(\optdomain)
  < \infty$.
  \begin{enumerate}[i.]
  \item If $\optdomain \subset \R^n$, then $\effdim(\optdomain) \le n$.
  \item If $\lim_{\beta \downarrow 0} \sup_{\optvar \in \optdomain}
    \sum_{j \ge 1} j^\beta |\optvar_j| < \infty$,
    then $\effdim(\optdomain) < \infty$.
  \item
    Let $N < \infty$ and
    $\gamma > 0$ satisfy the tail condition that
    $\sup_{\optvar \in \optdomain} \sum_{j = 1}^N |\optvar_j|
    \ge n^{2\gamma} \sup_{\optvar \in \optdomain}
    \sum_{j > n} |\optvar_j|$ for all $n > N$. Then
    $\effdim(\optdomain) \le O(1) \cdot (N^2 + \exp(\frac{3}{\gamma}
    \log \frac{1}{\gamma}))$.
  \end{enumerate}
\end{observation}


The next proposition then shows that the $\ell_1$-diameter of $\optdomain$
always provides lower bounds on the (stochastic) minimax risk,
while $p$-norm-based mirror
descent algorithms achieve regret at most logarithmic in
the effective-dimension~\eqref{eqn:effective-dimension},
making the $\ell_1$-diameter of $\optdomain$ the central quantity
governing minimax risk.
\begin{proposition}
  \label{proposition:risks-with-l1-norms}
  Let $\optdomain$ be an orthosymmetric convex
  set.
  Then
  \begin{equation*}
    \frac{1}{8 \sqrt{\log 3}} \frac{1}{\sqrt{k}}
    \sup_{\optvar \in \optdomain} \lone{\optvar}
    \le \minimaxs(\optdomain, \linf{\cdot}).
  \end{equation*}
  If additionally $\optdomain$
  has finite effective dimension
  $N = \effdim(\optdomain)$ as in~\eqref{eqn:effective-dimension},
  then
  \begin{equation*}
    \minimaxr(\optdomain, \linf{\cdot})
    \le O(1) \cdot \sqrt{\log N} \frac{1}{\sqrt{k}}
    \sup_{\optvar \in \optdomain}
    \lone{\optvar}.
  \end{equation*}
  Letting $\beta = \frac{1}{\log N}$ and
  defining the operator $A$ by $A \optvar = (j^\beta \optvar_j)_{j \ge 1}$,
  the mirror descent method with distance generating
  function $h(\optvar) = \frac{1}{2(p - 1)} \norm{A \optvar}_p^2$
  for $p = \frac{2}{2 - \beta}$ achieves this regret.
\end{proposition}

To preview our discussion to come, note the similarities between
Proposition~\ref{proposition:risks-with-l1-norms} and
Corollary~\ref{corollary:soft-thresh-minimax} in the Gaussian sequence model
case: so long as $\optdomain$ is appropriately regular, there is a
``universal'' nonlinear method---in the case of Gaussian sequence models,
soft-thresholding with truncation~\eqref{eqn:truncated-soft-est}, in the
case of stochastic and online optimization, mirror descent with a
$p$-norm-based distance-generating function---that achieves nearly
rate-optimal minimax risk.
The analogy extends spiritually as well, as in both Gaussian sequence models
and stochastic optimization~\cite{Johnstone17, DonohoJo95, DonohoLiMa90,
  NemirovskiYu83, BeckTe03}, the universal methods were originally designed
for high-dimensional signals $\optvar$.

\begin{proof}
  The lower bound is a direct consequence of Corollary~\ref{corollary:lb}.
  To demonstrate the upper bound, we take $\optdomain \subset \R^\N$, as the
  special case of finite dimensions follows immediately.
  For notational
  simplicity, for a positive sequence $(a_j)_{j \in \N}$ to be chosen we let
  $A : \R^\N \to \R^\N$ be the diagonal linear operator $Ax = (a_j x_j)_{j
    \in \N}$.
  For $p \in \openleft{1}{2}$ to be chosen as well, define
  the distance generating function
  \begin{equation*}
    h(x) = \frac{1}{2(p - 1)} \norm{A x}_p^2.
  \end{equation*}
  By an analogous argument to the finite-dimensional
  case, this is strongly convex with respect to
  the norm $\norm{x} = \norm{Ax}_p$ and has dual norm
  $\dnorm{g} = \norm{A^{-1} g}_q$ for $q = \frac{p}{p-1}$.
  For any sequence of subgradients $g_i \in [-1, 1]^\N$,
  we thus obtain regret bound
  \begin{equation*}
    \regret_k(x) \le \frac{1}{2(p - 1) \stepsize} \norm{A x}_p^2
    + \frac{\stepsize}{2} \sum_{i = 1}^k \norm{A^{-1} g_i}_q^2.
  \end{equation*}
  Notably, for $g \in [-1, 1]^\N$ we have
  \begin{equation*}
    \norm{A^{-1} g}_q^q
    \le \sum_{j = 1}^\infty a_j^{-q}.
  \end{equation*}
  We use the assumption~\eqref{eqn:effective-dimension} that $\optdomain$
  has finite effective dimension $N$, so that for $\beta = \frac{1}{\log N}
  \le 1$, $\sup_{\optvar \in \optdomain} \sum_{j = 1}^\infty j^\beta
  |\optvar_j| \le e \sup_{\optvar \in \optdomain} \lone{\optvar}$.  Letting
  $a_j = j^\beta$, we take $q = \frac{2}{\beta}$, whence $\norms{A^{-1}
    g}_q^q = \sum_{j = 1}^\infty j^{-2} = \frac{\pi^2}{6}$.
  As this also
  gives conjugate $p = \frac{q}{q - 1} = 1 + \frac{\beta}{2 - \beta}$,
  we obtain the regret bound
  \begin{equation*}
    \regret_k(x) \le \frac{1}{\stepsize} \frac{2 - \beta}{2 \beta}
    \norm{A x}_p^2
    + O(1) \cdot \stepsize k
    \le O(1) \left[\frac{1}{\stepsize \beta} \sup_{\optvar \in \optdomain}
      \lone{\optvar}^2 + \stepsize k \right].
  \end{equation*}
  Take $\stepsize = \sup_{\optvar \in \optdomain}
  \lone{\optvar} / \sqrt{k \beta}$ to obtain the convergence
  upper bound.
\end{proof}

We make one final remark on these results, and include some
commentary in the final discussion on future work.
Corollary~\ref{corollary:lb} shows that
\begin{equation*}
  \frac{1}{8\sqrt{k \log 3}}
  \sup_{\optvar \in \optdomain} \sup_{g : \gamma(g) \le 1}
  \dotp{\optvar}{g}
  \le \minimaxs(\optdomain, \gamma)
\end{equation*}
for arbitrary orthosymmetric convex $\optdomain$ and norms $\gamma$.
This is sharp whenever $\gamma$ is an $\ell_q$ norm for $2 \le q \le
\infty$, as then mirror descent with distance generating function
$h(\optvar) = \frac{1}{2(p - 1)} \norm{\optvar}_p^2$, $p = \max\{\frac{q}{q
  - 1}, 1 + \frac{1}{\log n}\}$ achieves the minimax lower bound, with
average regret scaling at worst as $\sqrt{\log n} \sup_{\optvar \in
  \optdomain} \norm{\optvar}_q / \sqrt{k}$.
It is also \emph{not} a tight lower bound in general: when
$\optdomain = [-1, 1]^n$ and $\gamma(g) = \lone{g}$, then
Proposition~\ref{prop:lp-ball-q12} shows that
$\sqrt{n/k}$ is the correct scaling---a factor $\sqrt{n}$
larger than $\sup_{\optvar, g} \dotp{\optvar}{g}$.


\subsection{Hard problems for \eucgrad and quadratic hulls}
\label{sec:hard-euclidean-problems}

Theorem~\ref{th:sharp-md} shows that (non-linear) mirror descent methods are
minimax rate-optimal for $\ell_p$-ball constraint sets, $p \in [1, 2]$, with
gradients contained in the corresponding dual $\ell_{q}$-norm ball ($q =
\frac{p}{p-1}$). For such problems and $p$, standard subgradient methods achieve
worst-case regret $O(n^{1/2 - 1/q} / \sqrt{k})$.
This upper bound is in fact sharp: in the next theorem, we show that for any
method of linear type, we can construct a sequence of (linear) functions
such that the method's regret achieves the worst-case upper bound for
standard subgradient methods, precisely quantifying the gap between linear
and non-linear methods for this problem class.  To that end, let
\begin{equation*}
  \regret_{k,A}(\optvar) \defeq \sum_{i = 1}^k g_i^\top(\optvar_i - \optvar)
\end{equation*}
denote the regret of the Euclidean online mirror descent method with
distance generating function $h_A(\optvar) = \half \dotp{\optvar}{A
  \optvar}$ for functions $\f_i$ with subgradients $g_i$.
In the lower bounds to come,
we take $\f_i(\optvar) = \dotp{g_i}{\optvar}$ to be linear,
so that $\nabla \f_i(\optvar) = g_i$ is independent of $\optvar$.

\begin{theorem}
  \label{th:regret-ub-lp}
  For any $A \succeq 0$ and $p \in [1, 2]$ with $q = \frac{p}{p-1}$, there
  exists a sequence of vectors $g_i \in \R^n$, $\norm{g_i}_q \le 1$, and
  point $\optvar \in \R^n$ with $\norm{\optvar}_p \le 1$ such that
  \begin{equation*}
    \regret_{k, A}(\optvar) \ge
    \half \min\left\{k / 2, \sqrt{2k} \cdot n^{1/2 - 1/q} \right\}.
  \end{equation*}
  Scaled identity matrices $A = c \cdot I_n$ achieve
  these bounds to within a factor of $\sqrt{2}$ for
  $k \ge 2 n^{1 - 2/q}$.
\end{theorem}
\begin{proof}
  Let $A \succ 0$ be a positive semi-definite matrix for the distance
  generating function $h_A(\optvar) = \half \optvar^\top A \optvar$ defined above,
  and let $q = \frac{p}{p-1}$ be the conjugate to $p$.  We choose linear functions
  $\f_i(\optvar) := \dotp{g_i}{\optvar}$ where $g_i \in \ball{q}{0}{1}$. In this
  case, letting $\{\optvar_i\}_{i\leq k}$ be the points mirror descent plays, the
  regret with respect to $\optvar \in\R^n$ is
  \begin{equation*}
    \regret_{k, A}(\optvar) =
    \sum_{i\leq k} \f_i(\optvar_i) - \f_i(\optvar) = \sum_{i\leq k}
    \dotp{g_i}{(\optvar_i - \optvar)},
  \end{equation*}
  so that
  \begin{equation*}
    \regret^\ast_{k, A}
    \defeq
    \sup_{\norm{\optvar}_p \le 1}
    \regret_{k,A}(\optvar)
    = \normbigg{\sum_{i \le k} g_i}_q
    + \half \sum_{i \le k} \norm{g_i}_{A^{-1}}^2
    - \half \normbigg{\sum_{i \le k} g_i}_{A^{-1}}^2.
  \end{equation*}
  Now, we choose linear functions $f_i$ so that the regret is large.
  To do so, choose vectors
  \begin{equation}
    \label{eqn:choose-u-v}
    u \in \argmax_{\norm{\statval}_q \le 1} \dotp{\statval}{A^{-1} \statval}
    ~~ \mbox{and} ~~
    v \in \argmin_{\norm{\statval}_q = 1} \dotp{\statval}{A^{-1} \statval}.
  \end{equation}
  Then set the (gradient) vectors $g_i \in \R^n$ so
  that for a $\delta \in [0, 1]$ to be chosen,
  \begin{enumerate}[(a)]
  \item $g_i = u$ for $k/4$ of the indices $i \in [k]$
  \item $g_i = -u$ for $k/4$ of the indices $i \in [k]$
  \item $g_i = v$ for $\frac{k}{4}(1+\delta)$ of the indices $i \in [k]$
  \item $g_i = -v$ for $\frac{k}{4}(1-\delta)$ of the indices $i \in [k]$.
  \end{enumerate}
  With these choices, we obtain the regret lower bound
  \begin{align}
    \regret^\ast_{k, A}
    & \geq
    \sup_{\delta \le 1}
    \left[
      \frac{k}{2} \delta \norm{v}_q
      + \frac{k}{4} \dotp{u}{\invert{A}u} -
      \frac{\delta^2 k^2}{8}\dotp{v}{\invert{A}v}
      \right] \nonumber \\
    & \ge \frac{k}{4} \cdot \left[\dotp{u}{A^{-1} u} + \min\left\{
      1, \frac{2\norm{v}_q}{k \dotp{v}{A^{-1} v}}\right\}
      \norm{v}_q\right].
    \label{eqn:regret-with-A-q}
  \end{align}
  
  \newcommand{\opnormpq}[1]{\norm{#1}_{\ell_p \to \ell_q}}
  \newcommand{\opnormqtwo}[1]{\norm{#1}_{\ell_q \to \ell_2}}
  \newcommand{\opnormqtwos}[1]{\norms{#1}_{\ell_q \to \ell_2}}
  \newcommand{\opnormtwoq}[1]{\norm{#1}_{\ell_2 \to \ell_q}}
  \newcommand{\opnormtwoqs}[1]{\norms{#1}_{\ell_2 \to \ell_q}}
  
  We now consider two cases. In the first, $A$ is large enough that
  $\norm{v}_q \ge \half k \dotp{v}{A^{-1} v}$. Then the regret
  bound~\eqref{eqn:regret-with-A-q} becomes
  \begin{equation*}
    \regret^*_{k,A} \ge \frac{k}{4} \left[
      \dotp{u}{A^{-1} u} + \norm{v}_q\right]
    \ge \frac{k}{4},
  \end{equation*}
  as $\norm{v}_q = 1$ by the construction~\eqref{eqn:choose-u-v}. This gives the
  first result of the theorem. For the second claim, which holds in the case that
  $\norm{v}_q < \half k \dotp{v}{A^{-1} v}$, we consider the operator norms of
  general invertible linear operators. For a mapping $T : \R^n \to \R^n$, define
  the $\ell_p$ to $\ell_q$ operator norm
  \begin{equation*}
    \opnormpq{T} \defeq \sup_{x \neq 0} \frac{\norm{T(x)}_q}{\norm{x}_p}.
  \end{equation*}
  Then the construction~\eqref{eqn:choose-u-v} evidently yields
  \begin{equation*}
    \dotp{u}{A^{-1}u}
    = \opnormqtwos{A^{-1/2}}^2
    ~~~ \mbox{and} ~~~
    \frac{\norm{v}_q^2}{\dotp{v}{A^{-1} v}}
    = \sup_{x \neq 0} \frac{\norms{A^{1/2} x}_q^2}{
      \norm{x}_2^2}
    = \opnormtwoqs{A^{1/2}}^2.
  \end{equation*}
  Revisiting the regret~\eqref{eqn:regret-with-A-q}, we obtain
  \begin{equation*}
    \regret^*_{k,A} \ge
    \frac{k}{4} \cdot \left[
      \opnormqtwo{A^{-1/2}}^2
      + \frac{2}{k} \opnormtwoq{A^{1/2}}^2\right]
    \ge \sqrt{\frac{k}{2}} \opnormqtwos{A^{-1/2}} \opnormtwoqs{A^{1/2}},
  \end{equation*}
  where we have used that $ab \le \half a^2 + \half b^2$ for all $a, b$.
  But for any invertible linear operator, standard results
  on the Banach-Mazur distance~\cite[Corollary~2.3.2]{Vershynin09}
  imply that
  \begin{equation*}
    \inf_{A \succ 0}
    \opnormtwoq{A} \opnormqtwo{A^{-1}} \ge
    n^{1/2 - 1/q}.
  \end{equation*}
  This gives the lower bound.
  
  For the claimed upper bound, note
  for $h(\optvar) = \frac{1}{2 \stepsize} \ltwo{\optvar}^2$
  (i.e.\ $A = \frac{1}{\stepsize} I_n$) and
  initial point $\optvar_0 = \zeros$, we have
  $\regret_k(\optvar) \le \frac{1}{2 \stepsize} \ltwo{\optvar}^2
  + \frac{\stepsize}{2} \sum_{i = 1}^k \ltwo{g_i}^2$.
  As $\ltwo{x} \le 1$ whenever $\norm{\optvar}_p \le 1$ and
  and $\ltwo{g} \le n^{1/2 - 1/q}$ whenever $\norm{g}_q \le 1$,
  we have
  $\regret_k(\optvar) \le \frac{1}{2 \stepsize}
  + \frac{\stepsize}{2} k n^{1 - 2/q}$.
  Choose $\stepsize = (k n^{1 - 2/q})^{-1/2}$ to minimize this bound
  and achieve $\sup_{\norm{\optvar}_p \le 1} \regret_k(\optvar)
  \le \sqrt{k} n^{1/2 - 1/q}$.
\end{proof}
These results
explicitly exhibit a gap between methods of linear type and non-linear
mirror descent methods for this problem class. In contrast to the frequent
practice in literature of simply comparing regret upper bounds---prima facie
illogical---we demonstrate the gap indeed must hold.

In combination with Theorem~\ref{th:regret-ub-lp},
Proposition~\ref{prop:rate-md} precisely separates
linear and non-linear mirror descent on these problems for all values of
$p\in[1, 2]$.
Indeed, when $p=1$, for any pre-conditioner $A$, there exists
a problem on which \eucgrad have regret at least $\Omega(1)\sqrt{n / k}$.
On the same problem, non-linear mirror descent has regret at most
$O(1)\sqrt{\log n / k}$, showing the advertised $\sqrt{n / \log n}$
gap.
When $p \ge 2 - 1 / \log n$ (so $\optdomain$ is nearly quadratically
convex), the gap reduces to at most a constant factor.

The quadratically convex hulls of both the domain $\optdomain$ and the space
$G \defeq \{\partial \f(x; \statval)\}_{\optvar \in \optdomain, \statval \in
  \statdomain}$ of potential subgradients turn out to completely
govern the behavior of linear-type methods.
In this sense, Corollary~\ref{cor:ub} is sharp, even to the
leading constant factor: the cost (in a minimax sense) of using
linear methods is that regret necessarily scales with
$\qhull(\optdomain)$ and $\qhull(G)$.


\begin{theorem}
  \label{theorem:regret-with-quadratic-hull}
  Let $\optdomain \subset \R^n$ and $\gradomain \subset \R^n$ be
  orthosymmetric convex bodies.
  For any sequence $A(k) \succeq 0$, there exist
  sequences of vectors $g_i = g_i(k) \in \gradomain$, $i = 1, \ldots, k$,
  such that
  \begin{equation*}
    \liminf_k \frac{1}{\sqrt{k}}
    \sup_{\optvar \in \optdomain} \regret_{k,A(k)}(\optvar) \ge
    \sup_{\optvar
      \in \qhull(\optdomain)}
    \sup_{g \in \qhull(\gradomain)} \optvar^\top g.
  \end{equation*}
  Additionally, for each $k$ there exists a diagonal matrix $D$ such that
  for any sequence of convex functions $\f_i : \optdomain \to \R$
  for which $\partial \f_i \subset \gradomain$,
  \begin{equation*}
    \frac{1}{\sqrt{k}} \sup_{\optvar \in \optdomain} \regret_{k, D}(\optvar)
    \le \sup_{\optvar \in \qhull(\optdomain)}
    \sup_{g \in \qhull(\gradomain)} \optvar^\top g.
  \end{equation*}
\end{theorem}
\noindent
See Appendix~\ref{sec:proof-regret-with-quadratic-hull} for the proof
of the result, which parallels that of Theorem~\ref{th:regret-ub-lp}.

To make a neater analogy with Gaussian sequence models
$y = \optvar + \noise$ for $\noise \sim \normal(0, \sigma^2 I_n)$, where
the scale of noise $\noise_j$ on different coordinates is
identical,
we specialize to the case that $\gamma(g) = \linf{g}$,
that is,
$G = \ball{\infty}{0}{1}$.
Recalling the notation~\eqref{eqn:func-family} identifying
$\mc{F}^{\linf{\cdot}, 1}$ as those functions whose subgradients $g \in
\partial \f(\optvar, \statval)$ belong to the $\ell_\infty$-ball, the
next corollary then follows immediately.

\begin{corollary}
  \label{cor:regret-with-quadratic-hull-l1}
  Let $\optdomain \subset \R^n$ be an orthosymmetric convex body.
  For any sequence $A(k) \succeq 0$, there exist
  sequences of vectors $g_i = g_i(k) \in \R^n$, $i = 1, \ldots, k$,
  with $\linf{g_i} \le 1$, such that
  \begin{equation*}
    \liminf_k \frac{1}{\sqrt{k}}
    \sup_{\optvar \in \optdomain} \regret_{k,A(k)}(\optvar) \ge \sup_{\optvar
      \in \qhull(\optdomain)} \lone{\optvar}.
  \end{equation*}
  Additionally, for each $k$ there exists a diagonal matrix $D$ such
  that for any sequence of convex functions $\f_i \in \mc{F}^{\linf{\cdot},1}$,
  \begin{equation*}
    \sup_{\optvar \in \optdomain} \regret_{k, D}(\optvar)
    \le \sqrt{k} \sup_{\optvar \in \qhull(\optdomain)} \lone{\optvar}.
  \end{equation*}
\end{corollary}

\noindent

In brief, the regret of \eucgrad \emph{necessarily} scales with the size of
the quadratic hull $\qhull(\optdomain)$.  Contrasting this result with
Proposition~\ref{proposition:risks-with-l1-norms}, we see with nonlinear
methods, the regret need scale only as $\sup_{\optvar \in \optdomain}
\lone{\optvar}$ rather than as $\sup_{\optvar \in \qhull(\optdomain)}
\lone{\optvar}$, so that the gap between convergence
achievable by linear and nonlinear
methods is large precisely when
\begin{equation*}
  \frac{\sup_{\optvar \in \qhull(\optdomain)} \lone{\optvar}}{
    \sup_{\optvar \in \optdomain} \lone{\optvar}}
  \gg 1.
\end{equation*}


\section{The role of quadratic convexity in sequence models and
  first-order methods}
\label{sec:role-quadratic-convexity}

The results in Section~\ref{sec:gsm} highlight and recapitulate some of the
ways that quadratic convexity distinguishes linear and nonlinear methods in
Gaussian sequence models.  Theorems~\ref{th:regret-ub-lp}
and~\ref{theorem:regret-with-quadratic-hull}, along with the complimentary
results in Proposition~\ref{proposition:risks-with-l1-norms},
address these differences for stochastic optimization problems.
So in both sequence models and convex optimization,
geometric aspects of the underlying set $\optdomain$ determine
nonlinear methods' necessity.
The analogy between sequence models and stochastic optimization methods is
not perfect, however, as there are sets $\optdomain$ for which linear
methods are minimax rate optimal for stochastic optimization problems and
not for sequence models and vice versa.
In both problem families, a particular ``distance'' of a set
$\optdomain$ from quadratic convexity delineates determines
when nonlinear methods are necessary; we show
that these can be different.

We begin by translating the results in Section~\ref{sec:gsm} on Gaussian
sequence models into a more geometric form; \citet{DonohoLiMa90} more or
less give this translation but we make a few minor modifications for
convenience in exposition.
The measure of size most natural for Gaussian sequence models
turns out to be (duality-based variants of)
the Kolmogorov $n$-width of the underlying set:
\begin{definition}
  The \emph{$n$-width} of a set $\optdomain$ is
  \begin{equation*}
    \width^2(n) \defeq
    \inf_{\zeros \preceq d \preceq \ones, \<\ones, d\> = n}
    \sup_{\optvar \in \optdomain} \sum_j (1 - d_j) \optvar_j^2.
  \end{equation*}
  The \emph{nonlinear $n$-width} of $X$ is
  \begin{equation*}
    \nlwidth^2(n) \defeq \sup_{\optvar \in \optdomain}
    \inf_{\zeros \preceq d \preceq \ones,
      \<\ones, d\> = n} \sum_j (1 - d_j) \optvar_j^2.
  \end{equation*}
\end{definition}
\noindent
Recalling Corollaries~\ref{corollary:linear-risk-general-gsm}
and~\ref{corollary:soft-thresh-minimax},
the gap between the linear minimax and nonlinear minimax
risk is large for (compact) convex sets $\optdomain$ whenever
the difference between
\begin{equation*}
  \sup_{\optvar \in \optdomain}
  \sum_{j = 1}^\infty \frac{\optvar_j^2 \sigma^2}{\optvar_j^2 + \sigma^2}
  ~~ \mbox{and} ~~
  \sup_{q \in \qhull(\optdomain)}
  \sum_{j = 1}^\infty \frac{q_j^2 \sigma^2}{q_j^2 + \sigma^2}.
\end{equation*}
The next proposition, parts of which are present in \citet[Section
  6]{DonohoLiMa90}, connects the $n$-widths to the linear and nonlinear
minimax risks, where for a vector $\optvar \in \R^\N$, we let
$|\optvar_{(1)}| \ge |\optvar_{(2)}| \ge \cdots$ denote the entries of
$\optvar$ sorted by magnitude.

\begin{proposition}
  \label{proposition:n-width-risk-of-gsm}
  For any compact orthosymmetric convex set $\optdomain$,
  \begin{equation*}
    \width^2(n) = \sup_{q \in \qhull(X)}
    \sum_{j \ge n + 1} q_{(j)}^2
    ~~ \mbox{and} ~~
    \nlwidth^2(n) = \sup_{x \in X}
    \sum_{j \ge n + 1} x_{(j)}^2.
  \end{equation*}
  Additionally, for any $\sigma^2$,
  \begin{equation*} 	
    \sup_{x \in X} \sum_j x_j^2 \wedge \sigma^2
    = \inf_n \left\{\nlwidth^2(n) + n \sigma^2 \right\},
  \end{equation*}
  and
  \begin{equation*}
    \sup_{q \in \qhull(X)}
    \sum_j q_j^2 \wedge \sigma^2
    =
    \inf_n \left\{\width^2(n) + n \sigma^2 \right\}.
  \end{equation*}
\end{proposition}
\proof{}
  For the characterizations, we use duality to see that
  \begin{align*}
    \width^2(n) & =
    \inf_{\zeros \preceq d \preceq \ones, \<\ones, d\> = n} \sup_{v \in \squaredhull(X)}
    \<\ones - d, v\> \\
    & = \sup_{v \in \squaredhull(X)}
    \inf_{\zeros \preceq d \preceq \ones, \<\ones, d\> = n}
    \<\ones - d, v\>
    = \sup_{v \in \squaredhull(X)}
    \sum_{j \ge n + 1} v_{(j)},
  \end{align*}
  while it is immediate that $\nlwidth^2(n) = \sup_{\optvar \in \optdomain}
  \sum_{j \ge n + 1} \optvar_{(j)}^2$.
  Then we recognize that for any sorted nonnegative vector
  $a_1 \ge a_2 \ge \cdots$,
  \begin{equation*}
    \inf_n \bigg\{\sum_{j \ge n + 1} a_j
    + n \sigma^2 \bigg\}
    =
    \inf_n \bigg\{\sum_{j \ge n + 1} a_j
    + \sum_{j = 1}^n \sigma^2 \bigg\}
    = \sum_j a_j \wedge \sigma^2
  \end{equation*}
  by choosing any $n$ such that $a_j \le \sigma^2$ for all
  $j \ge n$, while $a_j \ge \sigma^2$ for $j < n$.
\endproof

Using Proposition~\ref{proposition:n-width-risk-of-gsm}, we see that under
the conditions of Corollary~\ref{corollary:soft-thresh-minimax},
because for any $a, b \ge 0$ we have $\half \min\{a, b\}
\le \frac{ab}{a + b} \le \min\{a, b\}$,
the linear sequence model risk satisfies
\begin{equation*}
  \half \inf_n \left\{\width^2(n) + n \sigma^2\right\}
  \le \lineargsmrisk(\optdomain) \le
  \inf_n \left\{\width^2(n) + n \sigma^2\right\},
\end{equation*}
while by Proposition~\ref{proposition:gsm-lecam-lower}, the (nonlinear) minimax risk satisfies
\begin{equation*}
  \frac{1}{10}
  \inf_n \left\{\nlwidth^2(n) + n \sigma^2\right\}
  \le \maxgsmrisk(\optdomain)
  \lesssim \log \frac{1}{\sigma^2}
  \cdot
  \inf_n \left\{\nlwidth^2(n) + n \sigma^2\right\}.
\end{equation*}
The linear and nonlinear $n$-widths of $\optdomain$
therefore (up to a logarithmic factor in $\frac{1}{\sigma}$)
determine the risk in sequence models, so that when
they are similar the linear and nonlinear minimax risks
coincide. In stochastic optimization, in
contrast,
Section~\ref{sec:not-qc} shows
that convergence guarantees for methods of linear type
coincide with those for arbitrary methods
(again, up to logarithmic factors) if and only if
\begin{equation*}
  \sup_{\optvar \in \qhull(\optdomain)}
  \lone{\optvar} \asymp
  \sup_{\optvar \in \optdomain} \lone{\optvar}.
\end{equation*}

%

The typical scenario in sequence models one considers is the
risk as $\sigma \downarrow 0$, and the following
essentially trivial observation shows that for regular
enough sets $\optdomain$, when the
linear and nonlinear $n$-widths differ, the rates at which
$\lineargsmrisk$ and $\maxgsmrisk$ converge to zero differ.
\begin{observation}
  Let $\alpha > 0$ be such that
  $n^\alpha \nlwidth^2(n) \asymp \width^* > 0$ as $n \to \infty$,
  and assume for some $\beta > 0$ that $\width^2(n)
  \ge n^\beta \nlwidth^2(n)$. Then
  \begin{equation*}
    \frac{\maxgsmrisk(\optdomain)}{\lineargsmrisk(\optdomain)}
    \lesssim \log \frac{1}{\sigma^2}
    \cdot \sigma^\frac{2 \beta}{(1 + \alpha)(1 + \alpha - \beta)}
    \to 0
    ~~ \mbox{as} ~~ \sigma \downarrow 0.
  \end{equation*}
\end{observation}
\proof{}
  The assumption that $\nlwidth^2(n) \lesssim \width^* n^{-\alpha}$
  guarantees the assumptions of Corollary~\ref{corollary:soft-thresh-minimax}
  apply, and we necessarily have $\beta \le \alpha$ (as otherwise
  we would have $\nlwidth^2(n) \to \infty$). Thus
  there exists a (numerical) constant $C < \infty$
  such that for all small enough $\sigma^2 > 0$, 
  \begin{equation*}
    \maxgsmrisk(\optdomain) \le C \log \frac{1}{\sigma^2} \cdot
    \inf_n \left\{ \width^* n^{-\alpha} + n \sigma^2 \right\}
    \asymp \log \frac{1}{\sigma^2}
    \cdot \sigma^{\frac{2 \alpha}{1 + \alpha}},
  \end{equation*}
  which follows by setting $n = \sigma^{-2 / (1 + \alpha)}$. In
  contrast, the linear risk satisfies
  \begin{equation*}
    \lineargsmrisk(\optdomain)
    \gtrsim \inf_n \left\{n^\beta \nlwidth^2(n) + n \sigma^2\right\}
    \asymp \sigma^{\frac{2(\alpha - \beta)}{1 + \alpha - \beta}}
  \end{equation*}
  as $\sigma \downarrow 0$. Then observe that
  $\frac{\alpha}{1 + \alpha}
  - \frac{\delta}{1 + \delta}
  = \frac{\alpha - \delta}{1 + \alpha + \delta
    + \alpha \delta}$,
  and set $\delta = \alpha - \beta$.
\endproof

The role of quadratic convexity of $\optdomain$ differs between
Gaussian sequence models and stochastic optimization problems, however, and
the remainder of this section explores two extended examples highlighting
this.  We focus on sets $\optdomain \subset \R^\N$ in sequence space.

\subsection{A constraint set requiring nonlinearity only in
  Gaussian sequence models}

We  show that for a large family of $\ell_1$-bodies $\optdomain$,
minimax (rate) optimal estimation requires nonlinearity for
Gaussian sequence models but not in stochastic optimization.
Take
\begin{equation*}
  \optdomain \defeq \bigg\{\optvar \in \R^\N \mid
  \sum_{j = 1}^\infty a_j |\optvar_j| \le 1 \bigg\}
  ~~ \mbox{so} ~~
  \qhull(\optdomain) =
  \bigg\{\optvar \in \R^\N \mid
  \sum_{j = 1}^\infty a_j^2 \optvar_j^2 \le 1\bigg\},
\end{equation*}
where $(a_j)$ is a nondecreasing positive sequence where (w.l.o.g.)
we take $a_1 = 1$. Computing the $\ell_1$-diameters of
$\optdomain$ and its quadratic hull, we then have
\begin{equation}
  \label{eqn:l1-diameters-l1-body}
  \sup_{\optvar \in \optdomain} \lone{\optvar} = 1
  ~~ \mbox{and} ~~
  \sup_{\optvar \in \qhull(\optdomain)} \lone{\optvar}
  = \bigg(\sum_{j = 1}^\infty a_j^{-2}\bigg)^{1/2}
\end{equation}
by the Cauchy-Schwarz inequality.
On the other hand, because the coefficients $a_j$ are increasing,
we can compute both the linear and nonlinear widths via
\begin{equation*}
  \width^2(n)
  = \sup \bigg\{
  \sum_{j = n + 1}^\infty
  \optvar_j^2 \mid \sum_{j = 1}^\infty
  a_j^2 \optvar_j^2 \le 1,
  ~ \optvar_1 \ge \optvar_2 \ge \cdots \ge 0\bigg\}
\end{equation*}
and
\begin{equation*}
  \nlwidth^2(n)
  = \sup\bigg\{ \sum_{j = n + 1}^\infty \optvar_j^2
  \mid \sum_{j = 1}^\infty a_j \optvar_j \le 1,
  ~ \optvar_1 \ge \optvar_2 \ge \cdots \ge 0 \bigg\}.
\end{equation*}
By convexity (we maximize a convex function over a convex set in each case),
for each width the maximizing point takes the form
$(t, t, \ldots, t, 0, \ldots)$, with $m$ repeated values $t$.
Then we obtain
\begin{align}
  \label{eqn:width-of-l1-bodies}
  \width^2(n)
  & = \sup_{m \ge n, t \ge 0}
  \bigg\{ (m - n) t^2 \mid t^2 \sum_{j = 1}^m a_j^2 \le 1\bigg\}
  = \sup_{m \ge n} \frac{m - n}{\sum_{j = 1}^m a_j^2} \\
  \nlwidth^2(n)
  & = \sup_{m \ge n, t \ge 0}
  \bigg\{(m - n) t^2 \mid t \sum_{j = 1}^{m} a_j \le 1 \bigg\}
  = \sup_{m \ge n} \frac{m - n}{(\sum_{j = 1}^m a_j)^2}.
  \nonumber
\end{align}

Comparing the $\ell_1$-diameters~\eqref{eqn:l1-diameters-l1-body}
and the widths~\eqref{eqn:width-of-l1-bodies}, we see that whenever
$\sum_j a_j^{-2}$ is summable \eucgrad are minimax (rate)
optimal for stochastic optimization. If
$a_j = j^{\alpha / 2}$ for some $\alpha > 1$, however,
we have
\begin{align*}
  \width^2(n) & = \sup_{m \ge n} \frac{m - n}{
    \sum_{j = 1}^m j^\alpha}
  \asymp \sup_{m \ge n} \frac{m - n}{m^{1 + \alpha}}
  \asymp \frac{1}{n^\alpha}
  ~~ \mbox{while} \\
  \nlwidth^2(n) & =
  \sup_{m \ge n} \frac{m - n}{(\sum_{j = 1}^m j^{\alpha/2})^2}
  \asymp \sup_{m \ge n} \frac{m - n}{m^{2 + \alpha}}
  \asymp \frac{1}{n^{1 + \alpha}}
\end{align*}
because $\sum_{j = 1}^m j^\beta \asymp
\int_0^m t^\beta dt = \frac{1}{\beta + 1} m^{1 + \beta}$
for $\beta > 0$. Summarizing, we have the following corollary.
\begin{corollary}
  \label{corollary:l1-bodies-gsm-nonlinear}
  Let $\optdomain = \{\optvar \mid \sum_{j = 1}^\infty a_j |\optvar_j| \le 1\}$,
  where $a_j = j^{\alpha/2}$ and $\alpha > 1$.
  Then minimax rate optimal estimation
  for Gaussian sequence models requires that the estimator
  $\what{\optvar}$ be nonlinear, while
  \eucgrad are minimax rate optimal for stochastic optimization.
\end{corollary}

\subsection{A constraint set requiring nonlinearity only in
  stochastic optimization}

To contrast with Corollary~\ref{corollary:l1-bodies-gsm-nonlinear},
we can also give families of underlying constraint
sets $\optdomain$ for which only nonlinear
methods can be rate-optimal for stochastic optimization problems,
while linear estimators $\what{\optvar} = A y$ can be rate-optimal
in Gaussian sequence models.
At the grossest level, we construct sets
$\optdomain$ for which $\sup_{\optvar \in \optdomain} \lone{\optvar} \lesssim 1$
while $\sup_{\optvar \in \qhull(\optdomain)} \lone{\optvar} = +\infty$, while
$\nlwidth^2(n)$ and $\width^2(n)$ are comparable. To give a slightly
more nuanced picture, we consider the rates at which the
two $\ell_1$-diameters approach $+\infty$, comparing
\begin{equation*}
  \sup_{\optvar \in \optdomain} \<\ones_n, \optvar\>
  ~~ \mbox{and} ~~
  \sup_{\optvar \in \qhull(\optdomain)} \<\ones_n, \optvar\>,
\end{equation*}
where $\ones_n \in \R^\N$ denotes the sequence with $1$ in its first
$n$ positions and $0$ elsewhere.

Here, we elaborate on the $\ell_1$ bodies yielding
Corollary~\ref{corollary:l1-bodies-gsm-nonlinear} to
consider smaller axis-aligned polyhedra. Letting $e_j$ be the
basis vectors (i.e.\ sequences with a $1$ in position $j$ and 0 elsewhere),
let $a_j$ be a nondecreasing sequence with $a_1 = 1$ and
$b_j$ be arbitrary (for now),
define the two sets
\begin{equation*}
  C_0 \defeq \left\{\sigma_j a_j e_j\right\}_{j \in \N}
  ~~~ \mbox{and} ~~~
  C_1 \defeq \bigg\{\frac{1}{Z(n)} \sum_{j = 1}^n \sigma_j b_j
  e_j \bigg\}_{n \in \N},
  ~~~ \mbox{where}~ \sigma_j \in \{\pm 1\}.
\end{equation*}
We choose the normalizing
constants $Z(n)$ so that the points in $C_1$ all lie in
\begin{equation*}
  \qhull(C_0)
  = \qhull\bigg\{\optvar \mid \sum_{j = 1}^\infty a_j |\optvar_j| \le 1
  \bigg\}
  = \bigg\{\optvar \mid \sum_{j = 1}^\infty a_j^2 \optvar_j^2 \le 1 \bigg\},
\end{equation*}
i.e.\ $Z(n) = (\sum_{j = 1}^n a_j^2 b_j^2)^{1/2}$ so that
$Z(n)^{-2} \sum_{j = 1}^n a_j^2 b_j^2 = 1$.
Then the set
\begin{equation}
  \label{eqn:domain-from-union-hull}
  \optdomain \defeq \conv\left(C_0 \cup C_1 \right)
  ~~ \mbox{satisfies} ~~
  \qhull(\optdomain) = \qhull(C_0).
\end{equation}

We obtain the following corollary of the our
convergence guarantees in Section~\ref{sec:not-qc}
and the technical lemmas we provide in Appendix~\ref{sec:proofs-qc-gaps}.
\begin{corollary}
  \label{corollary:polyhedra-stoch-opt-nonlinear}
  Let $a_j = j^{\alpha/2}$ for some $0 < \alpha < 1$ and
  $b_j^2 = 2^j$ in the construction of $\optdomain$ above.
  Then
  \begin{equation*}
    n^{-\alpha} \le \nlwidth^2(n) \le \width^2(n) \lesssim n^{-\alpha},
  \end{equation*}
  while
  \begin{equation*}
    \frac{\sup_{\optvar \in \qhull(\optdomain)} \<\ones_n, \optvar\>}{
      \sup_{\optvar \in \optdomain} \<\ones_n, \optvar\>}
    \gtrsim n^{\frac{1 - \alpha}{2}}.
  \end{equation*}
  In particular, linear methods are rate optimal for estimation
  in Gaussian sequence models, while
  stochastic optimization over $\optdomain$ requires nonlinear methods.
\end{corollary}
Summarizing the conclusions of the corollary, as $\alpha \downarrow 0$, the
ratio of the $\ell_1$-diameters of $\qhull(\optdomain)$ and $\optdomain$
grows as $\sqrt{n}$, which by Cauchy-Schwarz is as large as possible: there
is a large gap between the achievable optimization performance of nonlinear
methods, as Proposition~\ref{proposition:risks-with-l1-norms} demonstrates,
and linear methods, whose regret necessarily scales as the $\ell_1$-diameter
of $\qhull(\optdomain)$ (Theorem~\ref{theorem:regret-with-quadratic-hull}).
Yet the nonlinear widths are comparable for all $n$, so linear methods
are minimax rate optimal as $\sigma^2 \downarrow 0$.

%

\section{The need for adaptive methods}\label{sec:adaptivity}


We have so far demonstrated that diagonal re-scaling is sufficient to
achieve minimax optimal rates for problems over quadratically convex
constraint sets. In practice, however, it is often the case that we do not
know the gradient geometry in advance, precluding selection of the
optimal linear pre-conditioner.
To address this problem~\cite{DuchiHaSi11,McMahanSt10}, adaptive gradient
methods choose, at each step, a (usually diagonal) matrix $\Lambda_i$
conditional on the subgradients observed thus far, $\{g_l\}_{l\le i}$. The
algorithm then updates the iterate based on the distance generating function
$h_i(\optvar) \defeq \half \dotp{\optvar}{\Lambda_i\optvar}$.
In this section, we
present a problem instance showing that when the ``scale'' of the
subgradients varies across dimensions, adaptive gradient methods are crucial
to achieve low regret. While there exists an optimal pre-conditioner, if we
do not assume knowledge of the geometry in advance,
AdaGrad~\cite{DuchiHaSi11} achieves the minimax optimal regret while
standard (non-adaptive) subgradient methods can be $\sqrt{n}$ suboptimal
on the same problem.

We consider the following setting: $\optdomain = \ball{\infty}{0}{1}$ and
$\gamma_\beta(g) = \norm{\beta \odot g}_1$, for an arbitrary
$\beta \in \R^n, \beta\succ 0$. Intuitively, $\beta_j$ corresponds to the
``scale'' of the $j$-th dimension. On this problem, a straightforward
optimization of the regret bound~\eqref{eqn:md-regret} guarantees that
stochastic gradient methods achieve regret
$\sqrt{nk} / \min_j \beta_j$. We exhibit a problem instance in the following proof of Theorem~\ref{th:regret-sgm-wlp} such that, for any
stepsize $\alpha$, online gradient descent attains this worst-case regret.

\begin{theorem}
  \label{th:regret-sgm-wlp}
  Let $\regret_{k, \alpha}(\optvar) = \sum_{i\leq k} g_i^\top(\optvar_i - \optvar)$
  denote the regret of the online gradient descent method with stepsize
  $\alpha\geq 0$ for linear functions $\f_i(\optvar) = \dotp{g_i}{\optvar}$. For
  any choice of $\alpha \geq 0$ and $\beta \succ 0$, there exists a sequence of
  vectors $\{g_i\}_{i\leq k} \subset \R^n$, $\gamma_\beta(g_i) \le 1$ and point
  $\optvar\in\optdomain$ such that
  \begin{equation*}
    \regret_{k, \alpha}(\optvar) \ge \half\min\left\lbrace
    \frac{nk}{2\norm{\beta}_1},
    \frac{\sqrt{2nk}}{\min_{j\leq n}\beta_j}
    \right\rbrace.
  \end{equation*}
\end{theorem}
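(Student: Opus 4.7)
The plan is to exhibit, given $\alpha \ge 0$, two explicit adversarial gradient sequences---both satisfying $\gamma_\beta(g_i)\le 1$---and to select between them based on $\alpha$, mirroring the two terms of the claimed minimum.

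The first construction, effective when $\alpha$ is small, takes $g_i = \mathbf{1}/\|\beta\|_1$ for every $i\le n$; this satisfies $\gamma_\beta(g_i)=\sum_j\beta_j/\|\beta\|_1=1$. Starting from $\theta_1=0$ with unconstrained linear updates, the iterates are $\theta_i = -(i-1)\alpha\mathbf{1}/\|\beta\|_1$, and taking the comparator $\theta=-\mathbf{1}\in\Theta$ a direct expansion of $\sum_i g_i^\top(\theta_i-\theta)$ gives
\[
\regret_{n,\alpha}(-\mathbf{1}) \;=\; \frac{dn}{\|\beta\|_1} \;-\; \frac{\alpha dn(n-1)}{2\|\beta\|_1^2}.
\]
For $\alpha\le\|\beta\|_1/n$ this is at least $dn/(2\|\beta\|_1)$, recovering the first term of the minimum.

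The second construction, effective when $\alpha$ is large, is a sign-alternating gradient concentrated on the coordinate with smallest weight: letting $j^*\in\argmin_j\beta_j$, set $g_i = (-1)^{i+1}\,e_{j^*}/\beta_{j^*}$, so $\gamma_\beta(g_i)=1$. For even $n$ we have $\sum_i g_i = 0$, hence $\sum_i g_i^\top\theta = 0$ for every $\theta\in\Theta$; the iterates oscillate between $0$ and $-\alpha e_{j^*}/\beta_{j^*}$, and pairing consecutive rounds $(2k{-}1,2k)$ yields
\[
\regret_{n,\alpha}(\theta) \;=\; \sum_{i\le n} g_i^\top \theta_i \;=\; \frac{n\alpha}{2(\min_j\beta_j)^2}
\]
for any $\theta\in\Theta$. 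This grows linearly in $\alpha$ and exceeds $\sqrt{2dn}/(2\min_j\beta_j)$ as soon as $\alpha\ge\min_j\beta_j\sqrt{2d/n}$, recovering the second term.

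To finish, I would do a case split on $\alpha$: use the dense constant sequence for $\alpha\le\|\beta\|_1/n$, the oscillating sparse one for $\alpha\ge\min_j\beta_j\sqrt{2d/n}$, and for the intermediate window rely on the elementary inequality $\|\beta\|_1\ge d\min_j\beta_j$ to combine the two regret expressions and verify that their maximum still dominates $\tfrac{1}{2}\min\{dn/(2\|\beta\|_1),\sqrt{2dn}/\min_j\beta_j\}$. This intermediate window---nonempty precisely when $\|\beta\|_1<\min_j\beta_j\sqrt{2dn}$---is where the constants are tight and where I expect the main obstacle to lie: the two regret values behave linearly in $\alpha$ with opposite slopes, one coming from the constant-direction drift and one from oscillation, so matching them carefully around the balance point $\alpha^\ast\asymp\min_j\beta_j\sqrt{d/n}$ while controlling the $\sqrt{2}$ factor in $\sqrt{2dn}$ is the delicate bookkeeping step.
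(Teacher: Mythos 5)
Your two endpoint constructions are correct as far as they go (the constant sequence $g_i=\mathbf{1}/\lone{\beta}$ handles $\alpha\le\lone{\beta}/n$, and the alternating $\pm e_{j^*}/\beta_{j^*}$ sequence handles $\alpha\ge\min_j\beta_j\sqrt{2d/n}$), but the plan for the intermediate window is a genuine gap, not bookkeeping: the maximum of these two regrets really does drop below the target there, so no combination of the two expressions can close it. Concretely, take $d=1$, $\beta=1$, $n$ large, and $\alpha=n^{-3/4}$ (which lies in your window $(\,2/n,\sqrt{2/n}\,)$). Your first construction gives regret $n-\alpha n(n-1)/2\approx n-\tfrac{1}{2}n^{5/4}<0$, and your second gives $n\alpha/2=\tfrac{1}{2}n^{1/4}$, while the target is $\tfrac{1}{2}\sqrt{2n}$. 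The structural reason is that a \emph{fully constant} drift sequence is too easy for gradient descent once $\alpha$ is moderately large (it races toward the corner and the regret formula's $-\tfrac{\alpha}{2}\|\sum_i g_i\|_2^2$ term kills the bound), whereas pure oscillation is too weak until $\alpha$ is quite large; there is a range of $\alpha$ where neither mechanism alone produces $\Omega(\sqrt{dn}/\min_j\beta_j)$ regret.

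The paper avoids this by using a \emph{single} sequence that mixes both mechanisms with a tunable imbalance: with $u=e_k/\beta_k$ ($k=\argmin_j\beta_j$) and $v=\mathbf{1}/\lone{\beta}$, it plays $\pm u$ each on $n/4$ rounds (balanced oscillation) and $v$, $-v$ on $\tfrac{n}{4}(1+\delta)$ and $\tfrac{n}{4}(1-\delta)$ rounds (a $\delta$-imbalanced drift), then optimizes $\delta\in[0,1]$. Writing the worst-case regret over $\|\theta\|_\infty\le 1$ in closed form and maximizing over $\delta$ gives
\begin{equation*}
  \regret^\ast_{n,\alpha}\;\ge\;\frac{n\alpha}{4}\ltwo{u}^2
  +\frac{n\lone{v}}{4}\min\Bigl\{1,\;\frac{2\lone{v}}{n\alpha\ltwo{v}^2}\Bigr\},
\end{equation*}
and the case split on $\alpha$ plus AM--GM between the oscillation term and the drift term yields the stated bound \emph{uniformly} in $\alpha$: when $\alpha$ is small one takes $\delta=1$ (recovering your first bound), and when $\alpha$ is moderate one shrinks $\delta\asymp\lone{v}/(n\alpha\ltwo{v}^2)$ so the comparator's gain $\tfrac{n\delta}{2}\lone{v}$ beats the quadratic penalty $\tfrac{\alpha\delta^2n^2}{8}\ltwo{v}^2$. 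To repair your argument you would need to replace the constant sequence by such a $\delta$-imbalanced $\pm v$ sequence (and keep the $\pm u$ oscillation in the same sequence), i.e.\ essentially adopt the paper's construction; selecting between your two fixed sequences cannot work.
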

\begin{proof}
  The proof follows similar lines as those we develop in the proof of
  Theorem~\ref{th:regret-ub-lp}, but choosing different $u, v \in \R^n$.
  Let $\stepsize \geq 0$ be a stepsize.
  We consider linear functions
  $\f_i(\optvar) \defeq \dotp{g_i}{\optvar}$ with $\norm{\beta \odot g_i}_1
  \leq 1$. Let $\{\optvar_i\}_{i\leq k}$ be the iterates of online gradient
  descent. The regret with respect to $\optvar\in\R^n$ is
  \begin{equation*}
    \regret_{k, \alpha}(\optvar) = \sum_{i\leq k}\dotp{g_i}{(\optvar_i - \optvar)}.
  \end{equation*}
  This yields
  \begin{equation*}
    \regret_{k, \alpha}^\ast = \sup_{\norm{\optvar}_\infty \leq 1} \regret_{k,
      \alpha}(\optvar) = \normbigg{\sum_{i\leq k} g_i}_1 +
    \frac{\stepsize}{2}\sum_{i\leq k}\norm{g_i}_2^2 -
    \frac{\stepsize}{2}\normbigg{\sum_{i\leq k} g_i}_2^2.
  \end{equation*}
  
  Let $d = \arg\min_{j\leq n}\beta_j$, we choose
  \begin{equation*}
    u = e_d / \beta_d ~~\mbox{and}~~ v = \frac{\mathbf{1}}{\norm{\beta}_1}.
  \end{equation*}
  
  For $\delta\in[0, 1]$, we now choose the vectors $g_i \in \R^n$ as follows:
  \begin{enumerate}[(a)]
  \item $g_i = u$ for $k/4$ of the indices $i\in[k]$.
  \item $g_i = -u$ for $k/4$ of the indices $i\in[k]$.
  \item $g_i = v$ for $\frac{k}{4}(1 + \delta)$ of the indices $i\in[k]$.
  \item $g_i = -v$ for $\frac{k}{4}(1-\delta)$ of the indices $i\in[k]$.
  \end{enumerate}
  
  For this construction, we lower bound the regret
  \begin{align}\label{eq:regret-lb-wlp}
    \regret_{k, \alpha}^\ast & \ge \sup_{0\leq \delta \leq 1} \left\lbrace
    \frac{k\delta}{2}\norm{v}_1 + \frac{k\stepsize}{4} \norm{u}^2_2 -
    \frac{\stepsize \delta^2 k^2}{8}\norm{v}_2^2 \right\rbrace \nonumber \\
    & \ge \frac{k\stepsize}{4}\norm{u}_2^2 +
    \frac{k\norm{v}_1}{4}\min\left\lbrace 1,
    \frac{2\norm{v}_1}{k\alpha\norm{v}_2^2}\right\rbrace.
  \end{align}
  If the stepsize is too small (i.e. $\alpha \le
  \frac{2}{k}\frac{\norms{v}_1}{\norms{v}^2_2}$) then \eqref{eq:regret-lb-wlp}
  becomes
  \begin{equation*}
    \regret_{k, \alpha}^\ast \geq \frac{kn}{4\norm{\beta}_1}.
  \end{equation*}
  In the other case that $\alpha > \frac{2}{k}\frac{\norms{v}_1}{\norms{v}^2_2}$,
  \eqref{eq:regret-lb-wlp} yields
  \begin{equation*}
    \regret_{k, \alpha}^\ast \geq \frac{k}{4\alpha}\norm{u}_2^2 +
    \frac{\norm{v}_1^2}{\norm{v}_2^2} \frac{\alpha}{2} \geq
    \frac{\sqrt{2}}{2}\frac{\sqrt{kn}}{\min_{j\leq n}\beta_j},
  \end{equation*}
  which is the desired result.
\end{proof}

In contrast, AdaGrad~\cite{DuchiHaSi11} achieves regret
$\sqrt{k}\ltwo{1/\beta}$, demonstrating suboptimality gap as
large as $\sqrt{n}$ for some choices of $\beta$.
Indeed, let $\regret_{k, \mathsf{AdaGrad}}(\optvar)$ be the regret of
AdaGrad. Then
\begin{equation*}
  \regret_{k, \mathsf{AdaGrad}}(\optvar) \leq 2\sqrt{2}\sum_{j\leq
    n}\sqrt{\sum_{i\leq k} g_{i, j}^2}.
\end{equation*}
(see~\citet[Corollary~6]{DuchiHaSi11}), and by Cauchy-Schwarz, 
\begin{equation*}
  \sum_{j\leq n} \sqrt{\sum_{i\leq k} g_{i, j}^2} = \sum_{j\leq n}
  \frac{1}{\beta_j} \sqrt{\sum_{i\leq k} \beta_j^2 g_{i, j}^2} \leq
  \norm{1/\beta}_2 \sqrt{\sum_{i\leq k} \norm{\beta \odot g_i}^2_2} \leq
  \sqrt{k}\norm{1/\beta}_2.
\end{equation*}
To concretely consider different scales across dimensions, take
$\beta_j = j$. Theorem~\ref{th:regret-sgm-wlp} guarantees that there exists a
collection of linear functions such that stochastic gradient methods suffer
regret $\Omega(1)\sqrt{nk}$. Given that
$\norms{1/\beta}_2 \le \sqrt{\zeta(2)} \le \pi/\sqrt{6}$,
AdaGrad achieves regret
$O(1)\sqrt{k}$---amounting to a suboptimality gap of order
$\sqrt{n}$---exhibiting the need for adaptivity.
This $\sqrt{n}$ gap is also the largest possible over
subgradient methods, which achieve regret
$\sqrt{n \sum_{i \le k} \ltwo{g_i}^2}
\le \sqrt{n} \sum_{j \le n} \sqrt{\sum_{i \le k} g_{i,j}^2}$ for
$\optdomain = \ball{\infty}{0}{1}$.
Finally, we note in passing that AdaGrad is
minimax optimal on this class of problems via a straightforward application of
Theorem~\ref{th:qc-qc}.

\section{Discussion}

We provide concrete foundations to compare adaptive, mirror, or standard
gradient methods, showing how problem geometry necessarily impacts
convergence.  This paper puts a particular computational spin on
optimization by connecting to Gaussian sequence models and linear versus
nonlinear updates, which we advocate for its ability to paint a different
picture than pure polynomial versus non-polynomial computational
complexity. This perspective draws from information-based models in
optimization~\cite{NemirovskiYu83, AgarwalBaRaWa12} and  models in scientific
computing where one uses certain families of operations---e.g., matrix
vector multiplies---to build up optimal algorithms under these
constraints~\cite{TrefethenBa97, BallardDeHoSc11}.
We hope to see more exploration in these directions.

There remain a few additional open questions.
The lower bound in Corollary~\ref{corollary:lb} exhibiting a scaling of
$\sup_{\optvar \in \optdomain} \sup_{\gamma(g) \le 1} \dotp{\optvar}{g}$, is
sharp in some cases, but not in others.
A natural conjecture that we leave open for now is that
this bound is tight whenever the set $G = \{g \mid \gamma(g) \le 1\}$
is quadratically convex.
We believe this to be true because of the connections between martingale
type and the regret guarantees of mirror descent~\cite{SrebroSrTe11}: in
brief, whenever the norm $\gamma$ satisfies a so-called type-2
inequality~\cite{Pisier89}, meaning roughly that if $S_n = \sum_{i = 1}^n
g_i$ for a martingale sequence of $g_i \in G$, then $\E[\gamma(S_n)^2] \le C
\sum_{i = 1}^n \E[\gamma(g_i)^2]$ for a reasonable constant $C$.
All the ``usual'' quadratically convex norms---e.g., $\ell_q$ norms
for $q \ge 2$---satisfy such inequalities~\cite{DumbgenGeVeWe10}, and
so it is natural to imagine that quadratic convexity
coincides with such inequalities.
We leave this open.

While Section~\ref{sec:adaptivity}
emphasizes the importance of adaptivity, the picture is
not fully complete: for example, in the case of quadratically convex
constraint sets, while the best diagonal pre-conditioner achieves optimal
rates, the extent to which adaptive gradient algorithms find this optimal
pre-conditioner remains an open question. Another avenue to explore involves
the many flavors of adaptivity---while the minimax framework assumes
knowledge of the problem setting (e.g.\ a bound on the domain or the
gradient norms), it is often the case that such parameters are unknown to
the practitioner. To what extent can adaptivity mitigate this and achieve
optimal rates, and is minimax (i.e.\ worst-case) optimality truly the right
measure of performance?


\ifdefined\moorsubmission
\ACKNOWLEDGMENT{We thank Aditya Grover, Annie Marsden, and Hongseok Namkoong
  for valuable comments, and Quentin Guignard for pointing us to the
  Banach-Mazur distance for Theorem~\ref{th:regret-ub-lp}.}
\else
\paragraph{Acknowledgments.}
We thank Aditya Grover, Annie Marsden, and Hongseok Namkoong
for valuable comments, and Quentin Guignard for pointing us to the
Banach-Mazur distance for Theorem~\ref{th:regret-ub-lp}.
\fi

\setlength{\bibsep}{1pt}
\bibliography{bib}
\bibliographystyle{abbrvnat}

\ifdefined\moorsubmission
\begin{APPENDICES}
\else
\appendix
\fi


%




\section{Duality results and minimax linear estimators}
\label{sec:proofs-minimax-linear-estimators}

In this section, we collect the deferred
proofs of various optimality results for the Gaussian
sequence model in Section~\ref{sec:gsm} that we use.


\subsection{Proof of Proposition~\ref{proposition:gsm-linear}}
\label{sec:proof-gsm-linear}

Recalling the form~\eqref{eqn:linear-gsm-shorthand} of the risk $\gsmrisk(A,
x) = x^T(A - I)^T(A - I) x + \sigma^2 \lfro{A}^2$, it is immediate that $A$
is unique, as $\lfro{A}^2$ is strongly convex in $A$ and $\sup_{x \in X}
\ltwo{(A - I)x}^2$ is convex in $A$. As $X$ is compact, the risk
$\maxgsmrisk(A, X)$ is finite for all $A$, and hence its minimum is attained.

We now consider the saddle point problem
\begin{equation*}
  \inf_A \sup_{x \in X} \gsmrisk(A, x)
  = \inf_A \sup_{x \in X} \left\{
  x^T A^T A x - 2 x^T A x + \sigma^2 \lfro{A}^2 \right\}.
\end{equation*}
By lifting to the space $\measures(X)$ of probability measures defined on
$X$, we have for each $A$ that
\begin{equation*}
  \maxgsmrisk(A, X)
  = \sup_{\nu \in \measures(X)}
  \int \tr((A - I)^T (A - I) xx^T) d\nu(x)
  + \sigma^2 \lfro{A}^2.
\end{equation*}
We shall apply Fan's minimax theorem~\cite{Fan53}, a specialization
of Sion's minimax theorem~\cite{Sion58}
to construct the $A$ minimizing this worst-case risk.
\begin{lemma}
  \label{lemma:fan}
  Let $X$ and $Y$ be compact convex subsets of (possibly distinct)
  topological vector spaces and $L : X \times Y \to \R$ be convex in its
  first argument, concave in its second, and continuous. Then
  \begin{equation*}
    \inf_{x \in X} \sup_{y \in Y} L(x, y)
    = \sup_{y \in Y} \inf_{x \in X}  L(x, y)
  \end{equation*}
  and the infimum and supremum are attained.
\end{lemma}
\noindent
To apply the theorem, we also require a vector space structure on
$\measures(X)$. For this, we rely on the Wasserstein distances, defined as
follows: for a (subset of a) vector space $X$ with norm $\norm{\cdot}$ and
function $f : X \to \R$, the Lipschitz norm on $f$ is $\lipnorm{f} =
\sup_{x \neq y \in X} |f(x) - f(y)| / \norm{x - y}$.  The Wasserstein
distance between measures $\nu, \mu$ on the space $X$ is then
\begin{equation*}
  W(\mu, \nu) = \sup_{\lipnorm{f} \le 1}
  \int f (d\mu - d\nu).
\end{equation*}
The Wasserstein distance metrizes convergence in distribution $\mu_n \cd
\mu$ if the measures $\mu_n, \mu$ have bounded first moment and turns
the space of measures on $X$ into a (topological) vector space.

In our case, we define
\begin{equation*}
  L(A, \nu) \defeq
  \int \tr((A - I)^T (A - I) xx^T) d\nu(x)
  + \sigma^2 \lfro{A}^2,
\end{equation*}
which is clearly convex/concave and continuous in $A$. To see
the continuity of $L$ in
$\nu$, we note that for any matrix $B$ and any $x, y \in X$ we
have
\begin{equation*}
  \<x x^T - yy^T, B\> = (x - y)^T (B+B^T)(x + y)/2 \le 2 \opnorm{B}
  \ltwo{x - y} \diam(X),
\end{equation*}
so that $x \mapsto \<xx^T, B\>$ is Lipschitz
and
\begin{equation*}
  |L(A, \nu) - L(A, \mu)|
  \le 2 \opnorm{A - I}^2 \diam(X) \cdot W(\nu, \mu).
\end{equation*}
Thus $L$ is continuous in $\nu$ for for the Wasserstein distance, and
$\measures(X)$ is compact for this topology by Prokhorov's
theorem~\cite{Billingsley99}.  In particular, if we use the shorthand
$X_\nu = \int xx^T d\nu(x)$, there exist $A, \nu$ such that
\begin{equation}
  \label{eqn:saddle-point}
  \inf_B L(B, \nu) = L(A, \nu)
  = \tr((A - I)^T (A - I) X_\nu) + \sigma^2 \lfro{A}^2
  = \sup_{\mu \in \measures(X)}
  L(A, \mu).
\end{equation}

We now construct the $A$ solving the saddle point
problem~\eqref{eqn:saddle-point}, that is, given $\nu$, we show the
(unique) $A$ minimizing $L(A, \nu)$. Taking derivatives of $L(A, \nu)$ and
recalling the shorthand $X_\nu = \int xx^T d\nu(x)$, we see that $A$ must
satisfy
\begin{equation*}
  A X_\nu + X_\nu A^T - 2 X_\nu + \sigma^2 (A + A^T) = 0.
\end{equation*}
If $X_\nu$ has spectral decomposition $X_\nu = U \Lambda U^T$
we let $A = UDU^T$ for a diagonal
matrix $D$ to be determined, and it is evidently enough to solve
\begin{equation*}
  2 D \Lambda - 2 \Lambda + 2 \sigma^2 D = 0,
  ~~~ \mbox{or} ~~~
  D = (\Lambda + \sigma^2 I)^{-1} \Lambda.
\end{equation*}

In particular, the choice
$A = (X_\nu + \sigma^2 I)^{-1/2} X_\nu (X_\nu + \sigma^2 I)^{-1/2}$ is
optimal; it is also unique for the given $\nu$ as $A \mapsto L(A, \nu)$
is strongly convex in $A$.

Finally, we show that without loss of generality, we may take $A$ to be
diagonal. If $S$ is a diagonal matrix of independent random signs, then
$\E_\nu[(Sx)(Sx)^T] = \diag(X_\nu) = \E_\nu[\diag(x)^2]$. Let $\wb{\nu}$
be the measure on $X$ induced by drawing $x \sim \nu$ and then multiplying
$x$ by the random signs $S x$.  Notably, we have $\tr(X_\nu) =
\tr(X_{\wb{\nu}})$ and $\tr(D X_\nu) = \tr(D X_{\wb{\nu}})$ for any
diagonal matrix $D$, as $\diag(X_\nu) = \diag(X_{\wb{\nu}})$.  Suppose for
the sake of contradiction that $L(\diag(A), \nu) > L(A, \nu)$. In this
case, $A$ must be non-diagonal, and so we have
\begin{align*}
  L(\diag(A), \nu)
  & = \tr(\diag(A)^2 X_\nu) - 2 \tr(\diag(A) X_\nu)
  + \tr(X_\nu) + \sigma^2 \lfro{\diag(A)}^2 \\
  & = \tr(\diag(A)^2 X_{\wb{\nu}})
  - 2 \tr(\diag(A) X_{\wb{\nu}}) + \tr(X_{\wb{\nu}}) + \sigma^2 \lfro{\diag(A)}^2 \\
  & \stackrel{(\star)}{<}
  \tr(\diag(A^2) X_{\wb{\nu}}) - 2 \tr(\diag(A) X_{\wb{\nu}}) + \tr(X_{\wb{\nu}})
  + \sigma^2 \lfro{A}^2 \\
  & =
  \tr(A^2 X_{\wb{\nu}}) - 2 \tr(\diag(A) X_{\wb{\nu}}) + \tr(X_{\wb{\nu}})
  + \sigma^2 \lfro{A}^2,
\end{align*}
where inequality~$(\star)$ follows because $\lfro{A} > \lfro{\diag(A)}$
while $\diag(A)^2 \preceq \diag(A^2)$.
The final line follows because $X_{\wb{\nu}}$ is diagonal,
so $\diag(A^2 X_{\wb{\nu}}) = \diag(A^2) X_{\wb{\nu}}$.
Finally,
noting that $\tr(\diag(A) X_{\wb{\nu}}) = \tr(A X_{\wb{\nu}})$, we see that
$L(\diag(A), \nu) < L(A, \wb{\nu})$, and so we have demonstrated
that
$L(A, \nu) < L(A, \wb{\nu})$. But this contradicts the assumed maximality
of $\nu$, and so it must be the case that $A$ is diagonal.

Now that we have $A$ diagonal, the claimed equality is immediate, and
we also notice that $L(D, \nu) = L(D, \wb{\nu})$ for any diagonal $D$.

\subsection{Proof of Corollary~\ref{corollary:finite-dim-linear-risk-gsm}}
\label{sec:proof-finite-dim-linear-risk-gsm}

Proposition~\ref{proposition:gsm-linear} implies that
\begin{align}
  \inf_A \sup_{\optvar \in \optdomain}
  \E_\optvar\left[\ltwos{Ay - \optvar}^2\right]
  & = \inf_{d \in \R^n}
  \sup_{\optvar \in \optdomain}
  \sum_{j = 1}^n \left((d_j - 1)^2 \optvar_j^2 + \sigma^2 d_j^2\right)
  \nonumber \\
  & = \inf_d \sup_{v \in \squaredhull(\optdomain)}
  \sum_{j = 1}^n \left((d_j - 1)^2 v_j + \sigma^2 d_j^2 \right)
  \nonumber \\
  \label{eqn:swap-inf-sup-qhull}
  & = \sup_{v \in \squaredhull(X)} 
  \inf_d \left\{\sum_{j = 1}^n
  \left((d_j - 1)^2 v_j + \sigma^2 d_j^2 \right) \right\},
\end{align}
where equality~\eqref{eqn:swap-inf-sup-qhull}
is a standard convex/concave saddle-point result
(we may without loss of generality restrict $d$ to the set $[0, 1]^n$).
Continuing
the equalities, we have
\begin{equation*}
  \inf_d \left\{(d_j - 1)^2 v_j + \sigma^2 d_j^2 \right\}
  = \frac{\sigma^2 v_j}{v_j + \sigma^2},
\end{equation*}
which implies the corollary.

\subsection{Proof of Corollary~\ref{corollary:linear-risk-general-gsm}}
\label{sec:proof-linear-risk-general-gsm}

For any linear operator $A : \R^\N \to \R^\N$,
we may write
\begin{equation*}
  (Ay)_j = a_j(y)
  = a_j(x + \noise)
  = a_j(x) + a_j(\noise)
\end{equation*}
for each $j$. Let $\Pi_n : \R^\N \to \R^n$ be the projection onto the
first $n$ coordinates of a vector and $Z_n : \R^\N \to \R^\N$ be the
projection zeroing the first $n$ elements. Then
\begin{equation*}
  \inf_A \sup_{\optvar \in \optdomain}
  \E_\optvar\left[\ltwos{Ay - \optvar}^2\right]
  \ge \inf_A \sup_{\optvar \in \Pi_n\optdomain}
  \E_\optvar[\ltwo{Ay - x}^2]
  = \inf_A \sup_{\optvar \in \Pi_n\optdomain}
  \sum_{j = 1}^n \E_\optvar[(a_j((x, \zeros) + \noise) - \optvar_j)^2],
\end{equation*}
where $(\optvar, \zeros) \in \Pi_n \optdomain \times \R^\N$. Then
by linearity, for $\optvar \in \R^n$ we write
\begin{equation*}
  a_j((\optvar, \zeros) + \noise)
  = \varphi_j(\optvar) + a_j(Z_n \noise) + a_j((I - Z_n) \noise)
\end{equation*}
where $\varphi_j : \R^n \to \R$ is the linear function
$\varphi_j(x) = a_j((x, \zeros))$. But $Z_n \noise$ and $(I - Z_n) \noise
= (\noise_1, \ldots, \noise_n, \zeros)$
are independent,
and because $\E[a_j(Z_n \noise)] = 0$ we obtain
\begin{align*}
  \E_\optvar[(a_j((\optvar, \zeros) + \noise) - x_j)^2]
  & = \E[(\varphi_j(\optvar + [\noise_j]_{j \le n})
    - \optvar_j
    + a_j(Z_n \noise))^2] \\
  & = \E[(\varphi_j(\optvar + [\noise_j]_{j \le n}) - \optvar_j)^2]
  + \var(a_j(Z_n \noise)).
\end{align*}
The optimal choice of $a_j$ then necessarily satisfies $a_j(Z_n u) = 0$
for all $u$.  Thus, by restricting to finite dimensions, we use
Proposition~\ref{proposition:gsm-linear} to see that for any $n$,
\begin{align*}
  \inf_A \sup_{\optvar \in \optdomain}
  \E_\optvar\left[\ltwos{Ay - \optvar}^2\right]
  & \ge \inf_{d \in \R^n}
  \sup_{\optvar \in \optdomain}
  \sum_{j = 1}^n \left((d_j - 1)^2 \optvar_j^2 + \sigma^2 d_j^2\right) \\
  & = \sup_{v \in \squaredhull(\optdomain)}
  \inf_d \sum_{j = 1}^n \left((d_j - 1)^2 v_j + \sigma^2 d_j^2 \right)
  = \sup_{v \in \squaredhull(\optdomain)}
  \sum_{j = 1}^n \frac{\sigma^2 v_j}{v_j + \sigma^2}
\end{align*}
as in the proof of Corollary~\ref{corollary:finite-dim-linear-risk-gsm}.

By compactness, for each $\epsilon > 0$, we can choose $N < \infty$
such that $\sup_{\optvar \in \optdomain} \sum_{j > N} x_j^2 < \epsilon$.
We thus have upper bound
\begin{equation*}
  \inf_A \maxgsmrisk(A, \optdomain)
  \le \inf_A \sup_{\optvar \in \Pi_N \optdomain}
  \E_\optvar[\ltwo{Ay - \optvar}^2]
  + \epsilon.
\end{equation*}
Apply the same proof technique as that in
Corollary~\ref{corollary:finite-dim-linear-risk-gsm} to obtain
\begin{equation*}
  \inf_A \maxgsmrisk(A, \Pi_N\optdomain)
  = \sup_{v \in \squaredhull(\Pi_N \optdomain)}
  \sum_{j = 1}^N \frac{\sigma^2 v_j}{v_j + \sigma^2}
  = \sup_{v \in \squaredhull(\optdomain)}
  \sum_{j = 1}^N \frac{\sigma^2 v_j}{v_j + \sigma^2}.
\end{equation*}
Now use that $\epsilon > 0$ was arbitrary, $\optdomain$ is compact, and
take $n$ and $N$ to infinity.

\subsection{Proof of Proposition~\ref{proposition:gsm-lecam-lower}}
\label{sec:proof-gsm-lecam-lower}

We apply Le Cam's two point method via the reduction from estimation
to testing that Lemma~\ref{lemma:est-to-test} implies. Consider
for any fixed $b > 0$ the
problem of estimating a single value $\optvar \in [-b, b]$
with $y \sim \normal(\optvar, \sigma^2)$. Then
for $v \in \{-1, 1\}$, letting $P_v$ be the
normal distribution $\normal(\delta v, \sigma^2)$ for some $\delta \in [0, b]$
to be chosen,
we have
\begin{equation*}
  \inf_{\what{\optvar}} \sup_{\optvar \in [-b, b]}
  \E_\optvar[(\what{\optvar} - \optvar)^2]
  \ge \inf_{\what{\optvar}}
  \half \left\{\E_{P_1}[(\what{\optvar} - \optvar)^2]
  + \E_{P_{-1}}[(\what{\optvar} - \optvar)^2] \right\}
  \ge \frac{\delta^2}{2}
  \left(1 - \tvnorm{P_{-1} - P_1}\right).
\end{equation*}
Recalling
the Hellinger
distance
$\dhel^2(P, Q) = 1 - \int \sqrt{dP dQ}$
between probabilities and
the standard
relationship
$\tvnorm{P_{-1} - P_1}
\le \dhel(P_1, P_{-1})
\sqrt{2 - \dhel^2(P_1, P_{-1})}$,
we note that
$\dhel^2(\normal(\mu_0, \sigma^2),
\normal(\mu_1, \sigma^2))
= 1 - \exp(-\frac{1}{8 \sigma^2} (\mu_0 - \mu_1)^2)$
to obtain
\begin{equation*}
  \inf_{\what{\optvar}} \sup_{\optvar \in [-b, b]}
  \E_\optvar\left[(\what{\optvar} - \optvar)^2\right]
  \ge \sup_{0 \le \delta \le b}
  \frac{\delta^2}{2} \left(1 -
  \sqrt{1 - \exp(-\delta^2 / \sigma^2)}\right)
  \ge \frac{\sigma^2 \wedge b^2}{10}
\end{equation*}
via the choice $\delta = \min\{\sigma, b\}$.
We thus obtain for any hypercube $H = [-\optvar_j, \optvar_j]_{j \ge 1}$
that
\begin{equation*}
  \maxgsmrisk(\optdomain)
  \ge \maxgsmrisk(H) \ge \frac{1}{10} \sum_{j \ge 1}
  \sigma^2 \wedge \optvar_j^2.
\end{equation*}

\subsection{Proof of Corollary~\ref{corollary:soft-thresh-minimax}}
\label{sec:proof-soft-thresh-minimax}

We adapt the arguments in~\citep[pg.~1430]{DonohoLiMa90}.
For $N = N(\sigma, \optdomain)$ we have
\begin{align*}
  \E[\ltwos{\what{\optvar} - \optvar}^2]
  & = \sum_{j = 1}^N
  \E[(\softthresh_\lambda(y_j) - \optvar_j)^2]
  + \sum_{j > N} \optvar_j^2 \\
  & \le \sigma^2 + (1 + 2 \log N) \sum_{j = 1}^N \optvar_j^2 \wedge \sigma^2
  + \sum_{j > N} \optvar_j^2 \wedge \sigma^2,
\end{align*}
as $\optvar_j^2 \le \sigma^2$ for
$j > N(\sigma, \optdomain)$, which implies the first result.
Proposition~\ref{proposition:gsm-lecam-lower} implies the second.


\section{Proofs for Section~\ref{sec:general}}

\subsection{Proof of Theorem~\ref{th:qc-not-qc}}
\label{sec:proof-qc-not-qc}

The upper bound is simply Corollary~\ref{cor:ub}.
For the lower bound,
similar to our warm-up in Section~\ref{sec:warm-up}, we consider ``sparse''
gradients, though instead of using Fano's method we use Assouad's method to
more carefully relate the geometry of the norm $\gamma$ and constraint set
$\optdomain$.

Let $a \in \R^n_+$ be such that $\rect(a) \subset \optdomain$. We consider the
sample space $\statdomain := \{\pm e_j\}_{j\leq n}$ and functions
\begin{equation*}
  \f(\optvar, \statval) \defeq \sum_{j\leq n}\frac{1}{\gamma(e_j)}|\statval_j||\optvar_j - a_j\statval_j|.
\end{equation*}
For any $\statval\in\statdomain$, the subdifferential $\partial_\optvar \f(\optvar, \statval)$
has at most one non-zero
coordinate; the orthosymmetry of $\gamma$ implies $\f\in\F{\gamma}{1}$. Let
$p \in \R^n_+$ (to be specified presently)
be such that $\mathbf{1}^\top p=1$ and for $1\leq j
\leq n$, let $\delta_j \in [0, 1/2]$. We define the distributions
$P_v$ on $\statdomain$ by
\begin{equation*}
  \statrv = \begin{cases}
    v_j e_j & \mbox{with~probability~} \frac{p_j(1+\delta_j)}{2} \\
    -v_j e_j & \mbox{with~probability~} \frac{p_j(1-\delta_j)}{2}.
  \end{cases}
\end{equation*}
With this choice, we evidently have
\begin{equation*}
  \ff_v(\optvar) = \E_{\statrv\sim P_v}\f(\optvar, \statrv) = \sum_{j\leq n}
  \frac{p_j}{\gamma(e_j)}\left[\frac{1+\delta_j}{2}|\optvar_j - a_j v_j| +
  \frac{1-\delta_j}{2}|\optvar_j + a_j v_j|\right]
\end{equation*}
and immediately that $\inf_\optdomain \ff_v = \sum_{j\leq n}
\frac{p_ja_j}{\gamma(e_j)}(1-\delta_j)$. As a consequence,
we have the Hamming separation (recall Eq.~\eqref{eqn:hamming-separation})
\begin{equation*}
  \ff_v(\optvar) - \inf_\optdomain \ff_v = \sum_{j\leq n}
  \frac{p_ja_j\delta_j}{\gamma(e_j)} \mathbf{1}_{\sign(\optvar_j) \neq v_j},
\end{equation*}
which allows us to apply Assouad's method via Lemma~\ref{lem:assouad}.

Using the same notation as Lemma~\ref{lem:assouad}, we have
\begin{equation*}
  \tvnorm{\P^k_{+j} - \P^k_{-j}}^2 \le \frac{1}{2}\dkl{\P^k_{+j}}{\P^k_{-j}} \le
  \log 3 \cdot k p_j\delta_j^2.
\end{equation*}
Choosing $\delta_j = \min\lbrace\frac{1}{2},
\frac{1}{2\sqrt{k p_j\log(3)}}\rbrace$ yields the lower bound
\begin{equation*}
  \minimaxs(\optdomain, \gamma) \ge \frac{1}{8}\sum_{j\leq n}
  \frac{a_j}{\gamma(e_j)}
  \min\left\{ p_j, \frac{\sqrt{p_j}}{\sqrt{k \log 3}}\right\},
\end{equation*}
and by taking $p_j = (\frac{a_j}{\gamma(e_j)})^2 / \ltwo{
  \rescale(a, \gamma)}^2$, we obtain for any $a \in \optdomain$ that
\begin{align*}
  \minimaxs(\optdomain, \gamma) &  \ge
  \minimaxs(\rect(a), \gamma) \\
   & \ge \frac{1}{8} \sum_{j \leq n} \frac{a_j}{\gamma(e_j)}
  \min\left\{ \frac{a_j^2}{\gamma(e_j)^2
    \ltwo{\rescale(a, \gamma)}^2},
    \frac{1}{\sqrt{k \log 3}}\frac{a_j}{\gamma(e_j)
      \ltwo{\rescale(a, \gamma)}}
    \right\} \\
  & = \frac{1}{8 \ltwo{\rescale(a, \gamma)}^2}
  \sum_{j = 1}^n \frac{a_j^2}{\gamma(e_j)^2}
  \min\left\{ \frac{a_j}{\gamma(e_j)},
  \frac{\ltwo{\rescale(a, \gamma)}}{\sqrt{k \log 3}}\right\}
\end{align*}

For notational simplicity, define the set $T \defeq \{\rescale(\optvar,
\gamma)) \mid \optvar \in \optdomain\}$, which is evidently orthosymmetric
and convex (it is a diagonal scaling of $\optdomain$). Then
\begin{equation}
  \label{eqn:intergalactic-planetary}
  \minimaxs(\optdomain, \gamma)
  \ge \sup_{u \in T}
  \frac{1}{8 \ltwo{u}^2}
  \sum_{j = 1}^n u_j^2 \min\left\{u_j, \frac{\ltwo{u}}{\sqrt{k \log 3}}
  \right\}.
\end{equation}
For any vector $u \in \R_+^n$ and $c < 1$, if we define $J = \{j \in [n] \mid
u_j \ge \frac{c}{\sqrt{n}} \ltwo{u}\}$, then
\begin{equation*}
  \ltwo{u}^2 = \ltwo{u_J}^2 + \ltwo{u_{J^c}}^2
  \le \ltwo{u_J}^2 + \ltwo{u}^2 \sum_{j \in J^c} \frac{c^2}{n}
  \le \ltwo{u_J}^2 + c^2 \ltwo{u}^2,
\end{equation*}
i.e. $\ltwo{u_J} \ge \sqrt{1 - c^2} \ltwo{u}$. Now, fix $d \in \N$. If in the supremum~\eqref{eqn:intergalactic-planetary}
we consider any vector $u \in T, u \ge 0$ satisfying $\norm{u}_0 \le d$,
then setting the index set $J = \{j : u_j \ge \ltwo{u} / \sqrt{k \log 3}\}
= \{j : u_j \ge \ltwo{u} / \sqrt{d (k/d) \log 3}\}$
we have
\begin{align*}
  \minimaxs(\optdomain, \gamma)
  & \ge
  \frac{1}{8 \ltwo{u}^2}
  \sum_{j = 1}^n u_j^2 \min\left\{u_j,
  \frac{\ltwo{u}}{\sqrt{k \log 3}}
  \right\}
  \ge
  \frac{1}{8 \ltwo{u}^2}
  \sum_{j \in J} u_j^2 \frac{\ltwo{u}}{\sqrt{k \log 3}}
  \\ & \ge \frac{1}{8} \left(1 - \frac{d}{k \log 3}\right)
  \frac{\ltwo{u}}{\sqrt{k \log 3}}.
\end{align*}
Taking a supremum over $u$ with $\norm{u}_0 \le d$ gives the theorem.

\subsection{Proof of Corollary~\ref{cor:qc-wlp}}
\label{sec:proof-cor-qc-not-qc}

Given the proof of Theorem~\ref{th:qc-not-qc}, the proof is nearly
immediate. Let $p \in [1, 2]$, $\beta \in \R_{++}^n$,
and $\gamma(v) = \norm{\beta \odot v}_p$.  For the lower bound,
the final display of the proof of Theorem~\ref{th:qc-not-qc} above
guarantees the lower bound
$\minimaxs(\optdomain, \gamma)
\ge \frac{1}{16} \ltwo{u} / \sqrt{k}$ for all $u
\in \{\rescale(\optvar, \gamma) \mid \optvar \in \optdomain\}$ and $k \ge 2n$.
We first observe that
$\qhull (\ball{\gamma}{0}{1}) = \{v \in \R^n \mid \norm{\beta \odot v}_2 \leq
1\}$. Thus, the upper bound in Theorem~\ref{th:qc-not-qc} is
\begin{equation*}
  \minimaxr(\optdomain, \gamma) \le \frac{1}{\sqrt{k}}\sup_{\optvar\in\optdomain}
  \sup_{g: \norm{\beta \odot g}_2 \leq 1} \optvar^\top g.
\end{equation*}
Using
\begin{equation*}
  \sup_{g: \norm{\beta \odot g}_2 \leq 1} u^\top g =
  \sup_{z: \norm{z}_2 \leq 1} u^\top\left(z / \beta\right) = \norm{u / \beta}_2,
\end{equation*}
and recalling $\beta_j = \gamma(e_j)$ concludes the proof.

\section{Proofs for Section~\ref{sec:not-qc}}

\subsection{Proof of Theorem~\ref{th:sharp-md}}\label{prf:th-sharp-md}

Let us tackle the first case stated in the theorem; we reduce the second
case to the first one by scaling the dimension.
As in the proofs of Proposition~\ref{prop:lp-ball-q12}
and~\ref{prop:lp-ball-q2infty}, let $c_p = n^{-1/p}$, and let $p^* =
\frac{p}{p - 1}$ be conjugate to $p$, so that
$c_{p^*} \norm{\ones}_{p^*} = 1$.

\subsubsection{Case $1 \leq p \leq 1 + 1/ \log(2n)$}
Lemma~\ref{lem:1-d} always implies the lower bound $1 / \sqrt{k}$, which we
may obtain by reducing to a lower-dimensional problem, so we assume without
loss of generality that $n \ge 8$.

\paragraph{Separation}
Take $\packset = \{\pm e_j\}_{j\leq n}$,
and for $\packval = \pm e_j \in \packset$,
we define $P_\packval$ on $\statrv\in\{\pm 1\}^n$ by choosing coordinates
of $\statrv$ independently via
\begin{equation*}
  \statrv_j =
  \begin{cases}
    1 & \mbox{~~with probability~~} \frac{1+\delta v_j}{2} \\
    -1 & \mbox{~~with probability~~} \frac{1-\delta v_j}{2},
  \end{cases}
\end{equation*}
whence evidently $\E_{P_\packval}[\statrv] = \delta \packval$.
For
$\statval\in\{\pm 1\}^n$, construct $\f \in \F{\gamma}{1}$ by defining
\begin{equation*}
  \f(\optvar, \statval) \defeq
  \frac{c_{p^*}}{2} \left[\dotp{\statval}{\optvar}
    + \hinge{\lone{\optvar} - 1}\right].
\end{equation*}
%
Then the population objective $\ff_\packval(\optvar) \defeq
\E_{P_\packval}[\f(\optvar; \statrv)] = \frac{c_{p^*}}{2} [\delta
  \dotp{\packval}{\optvar} + \hinge{\lone{\optvar} - 1}]$, which has
minimizer $\optvar_\packval = -\packval \in \optdomain$
and minimal value
$\inf_\optvar \ff_\packval(\optvar)
= \inf_{\optvar \in \optdomain} \ff_\packval(\optvar)
= -\frac{c_{p^*}}{2} \delta \ltwo{\packval}^2
= -\frac{c_{p^*}}{2} \delta$.
The sum $\ff_{\packval}(\optvar) + \ff_{\packval'}(\optvar)
= \frac{c_{p^*}}{2}[\delta \dotp{(\packval + \packval')}{\optvar}
  + 2 \hinge{\lone{\optvar} - 1}]$
has minimizer $\optvar = -\half(\packval + \packval')$.
Combining these calculations gives separation
\begin{align*}
  \dopt(\packval, \packval', \optdomain)
  = c_{p^*}
  \left[\delta
    - \frac{\delta}{4} \ltwo{\packval + \packval'}^2
    \right]
  \ge \frac{c_{p^*} \delta}{2}
  = \frac{n^{-1/p^*} \delta}{2},
\end{align*}
the inequality holding whenever $\packval \neq \packval'
\in \{\pm e_j\}_{j=1}^n$.
Lemma~\ref{lemma:opt-to-est} yields
\begin{equation*}
  \minimaxs(\optdomain, \gamma) \ge \frac{\delta n^{-1/p^\ast}}{4}
  \inf_{\psi:\statdomain^k \to \mc{V}}\P(\psi(\statrv_1^k) \neq V).
\end{equation*}
It now remains to bound the testing error.

\paragraph{Bounding the testing error} Noting that $|\mc{V}| = \log(2n)$, we
lower bound the testing error via Fano's inequality
\begin{equation*}
  \inf_{\psi:\statdomain^k \to \mc{V}}\P(\psi(\statrv_1^k) \neq V) \ge \left(1-\frac{\mathsf{I}(\statrv_1^k;V) + \log 2}{\log(2n)}\right).
\end{equation*}
For any $v\neq v'\in\mc{V}$, we have for $\delta \in [0, \half]$ that
\begin{equation*}
  \dkl{P_v}{P_{v'}} = \delta\log\frac{1+\delta}{1-\delta} \le 3\delta^2.
\end{equation*}
We can thus bound the mutual information between $\statrv_1^n$ and $V$
\begin{equation*}
  \mi{\statrv_1^k}{V} \le k\max_{v\neq v'}\dkl{P_v}{P_{v'}} \le 3k\delta^2.
\end{equation*}
In the case that $n < 8$, the lower bound holds trivially via
Lemma~\ref{lem:1-d}.
In the case that $n \ge 8$,
choosing $\delta^2 = \frac{\log(2n)}{6k} \wedge \half$
yields
\begin{equation}\label{eqn:lb-log2d}
  \minimaxs(\optdomain, \gamma) \geq
  \frac{n^{-1/p^*}}{4}
  \min\left\{\sqrt{\frac{\log(2 n)}{6k}}, \half\right\}
  \left(1 - \frac{1}{2} - \frac{1}{4}\right),
\end{equation}
which is valid for all $p\in[1, 2]$. In the case that
$1\leq p \leq 1+1/\log(2n)$, we note that
$n^{-1/p^\ast} = 1 / n^{\frac{p-1}{p}} \geq 1/e$, which yields
\begin{equation*}
  \minimaxs(\optdomain, \gamma) \geq c \cdot
  \sqrt{\frac{\log (2 n)}{k}} \wedge 1
\end{equation*}
for a numerical constant $c > 0$.

To conclude, we see that the upper bound follows immediately from
Proposition~\ref{prop:rate-md}.
Choosing $a = 1 + 1/\log(2n)$ and $a^* = \frac{a}{a-1} = 1 + \log(2n)$, the
quantity $\frac{\sup_{\optvar\in\optdomain} \norm{\optvar}_a \sup_{g \in
    \ball{\gamma}{0}{1}} \norm{g}_{a^*}}{\sqrt{a-1}\sqrt{k}}$ upper bounds
the minimax regret.
As $a \ge p$, $\sup_{\optvar\in\optdomain} \|\optvar\|_a
= 1$, and $a^* \le p^*$ so
\begin{equation*}
  \|g\|_{a^\ast} \leq n^{\frac{1}{a^\ast} - \frac{1}{p^\ast}} \|g\|_{p^\ast}
  \leq n^{\frac{1}{a^\ast}}
\end{equation*}
as $g\in \ball{p^\ast}{0}{1}$. Once we note that both $n^{1/a^\ast} =
\exp(\frac{\log n}{\log(2 n) + 1}) \leq e$ and $1 /
\sqrt{a-1} = \sqrt{\log(2n)}$, we conclude this case.

\subsubsection{Case $1 + 1/\log(2n) < p \leq 2$}

Let $n_0 \leq n$. We can embed a function $\f_{n_0}:\R^{n_0}\times
\statdomain \to \R$ as a function $\f:\R^n \times \statdomain \to \R$ by
letting $\pi_{n_0}$ denote the projection onto the first $n_0$-components,
and defining
\begin{equation*}
  \f(\optvar, \statval) =
  \f_{n_0}(\pi_{n_0}\optvar, \statval).
\end{equation*}
If the subgradients of $\f_{n_0}$ lie in $\ball{p^\ast}{0}{1}$, so do those of
$\f$. Similarly, if $\optvar_0 \in \{\tau \in \R^{n_0}, \|\tau\|_p \leq 1 \}$
then $\optvar = (\optvar_0, \mathbf{0}_{n_0+1:n}) \in \ball{p}{0}{1}$. As such,
any lower bound for the $n_0$-dimensional problem implies an identical one for
all $n \ge n_0$-dimensional problems. For $1 + 1/\log(2n) < p \leq 2$, let us
define $n_0 = \lceil \exp(\frac{1}{p-1}) / 2 \rceil$, so $n_0 \leq n$ as
desired. In the case that $p > 1 + 1/\log 16$, Lemma~\ref{lem:1-d} yields the
desired lower bound. In the case that $p \le 1 + 1 / \log 16$, we have that
$n_0 \geq 8$, and the lower
bound~\eqref{eqn:lb-log2d} holds so that for a numerical constant
$c > 0$,
\begin{equation*}
  \minimaxs(\optdomain, \gamma) \geq c
  n_0^{-1/p^\ast} \cdot \sqrt{\frac{\log(2n_0)}{n}} \wedge 1.
\end{equation*}
Once we use that $n_0^{-1/p^\ast} \geq (1/2)^{\frac{1}{p} - 1}\exp(-1/p)
\geq \sqrt{2/e}$, substituting $n_0 = \lceil \exp(\frac{1}{p-1}) / 2\rceil$
above gives the final lower bound
\begin{equation*}
  \minimaxs(\optdomain, \gamma) \geq c \cdot
  \frac{1}{\sqrt{2(p-1) n}} \wedge 1.
\end{equation*}
Proposition~\ref{prop:rate-md}
yields the upper bound and concludes this proof.

%

\subsection{Proof of Observation~\ref{observation:self-similar}}
\label{sec:proof-self-similar}

The first claim of the observation
is trivial. For the second and third, we use a bit of additional
notation, saying that $\optdomain$ is $\beta$-self-similar if
\begin{equation}
  \label{eqn:polynomial-l1-similar}
  \sup_{\optvar \in \optdomain} \sum_{j = 1}^\infty j^\beta |\optvar_j|
  \le e \sup_{\optvar \in \optdomain} \lone{\optvar}.
\end{equation}
We show that if $\lim_{\beta \downarrow 0}
\sup_{\optvar \in \optdomain} \sum_{j = 1}^\infty j^\beta |\optvar_j|
< \infty$, then there is some $\beta > 0$ for which
$\optdomain$ is $\beta$-self-similar~\eqref{eqn:polynomial-l1-similar}.
The key is the following claim:
\begin{lemma}
  Let $\optdomain$ satisfy
  $\lim_{\beta \downarrow 0} \sup_{\optvar \in \optdomain}
  \sum_{j = 1}^\infty j^\beta |\optvar_j| < \infty$. Then there
  exists $\beta_0 > 0$ such that $\beta
  \mapsto s(\beta) \defeq \sup_{\optvar \in \optdomain} \sum_{j = 1}^\infty
  j^\beta |\optvar_j|$ is continuous on $\openright{0}{\beta_0}$, and
  $\lim_{\beta \downarrow 0} s(\beta) = \sup_{\optvar \in
    \optdomain} \lone{\optvar}$.
\end{lemma}
\begin{proof}
  Let $\beta_0 = \sup\{\beta > 0 \mid s(\beta) < \infty\}$. Take $\beta \in
  \openright{0}{\beta_0}$, and let $\beta' \to \beta$ in
  $\openright{0}{\beta_0}$. By the assumption
  that $\sup_{\optvar \in \optdomain} \sum_{j = 1}^\infty j^\beta |\optvar_j|
  < \infty$ for all $\beta < \beta_0$, for any
  $\epsilon > 0$ and all suitably small $\gamma > 0$, we may take $N$ such
  that $\sup_{\optdomain} \sum_{j > N} j^{\beta + \gamma} |\optvar_j| <
  \epsilon$.  Then for $\beta'$ close enough to $\beta$, we obtain
  \begin{equation*}
    |s(\beta) - s(\beta')|
    \le \sup_{\optvar \in \optdomain}
    \sum_{j \le N} |j^\beta - j^{\beta'}| |\optvar_j|
    + \sup_{\optvar \in \optdomain}
    \sum_{j > N} (j^\beta + j^{\beta'}) |\optvar_j|
    \le 
    \sup_{\optvar \in \optdomain}
    \sum_{j \le N} |j^\beta - j^{\beta'}| |\optvar_j|
    + 2 \epsilon.
  \end{equation*}
  As $\beta \mapsto j^\beta$ is continuous in $\beta$,
  when $\beta'$ is close enough to $\beta$ we obtain
  $N |j^\beta - j^{\beta'}| \le \epsilon$ for all
  $j \le N$. So
  $|s(\beta) - s(\beta')| \le \epsilon \sup_{\optvar \in \optdomain}
  \lone{\optvar} + 2 \epsilon$. As $\epsilon > 0$ was arbitrary
  this completes the proof.
\end{proof}

\noindent
The preceding lemma demonstrates that
$\lim_{\beta \downarrow 0} \sup_{\optvar \in \optdomain} \sum_{j = 1}^\infty
j^\beta |\optvar_j| = \sup_{\optvar \in \optdomain}
\lone{\optvar}$, so there is some $\beta > 0$
for which
$\sup_{\optvar \in \optdomain} \sum_{j = 1}^\infty
j^\beta |\optvar_j|
\le e \sup_{\optvar \in \optdomain} \lone{\optvar}$. Thus
$\effdim(\optdomain) < \infty$.

Finally, for the final claim of the observation,
let $\ones_n$ denote the linear functional
$\<\ones_n, \optvar\> = \sum_{j = 1}^n \optvar_j$, and let
$N_\gamma$ be the smallest
$N$ such that
$\sup_{\optvar \in \optdomain}
\<\ones_N, \optvar\> \ge \sup_{\optvar \in \optdomain}
n^{2\gamma} \sum_{j > n} |\optvar_j|$
for all $n \ge N$.
Define
\begin{equation*}
  N \defeq \max\left\{N_\gamma, \exp\left(1 + \frac{3}{2 \gamma}
  \log \frac{3}{2\gamma}\right),
  \exp\left(\frac{1}{2 \log(e - \sqrt{e})}\right)
  \right\}.
\end{equation*}
We claim that $\optdomain$ is
$\beta$-self-similar~\eqref{eqn:polynomial-l1-similar}
for $\beta = \frac{1}{2 \log N}$, so that
$\effdim(\optdomain)
\le \exp(1/\beta)
= N^2$ as desired.
To see the claim of self-similarity, note that
the choice $\beta = \frac{1}{2 \log N}$ guarantees
$\beta - 2 \gamma \le
-\frac{3 \gamma}{2}$, and
\begin{align*}
  \sum_{j = 1}^\infty
  j^\beta |\optvar_j|
  & \le N^{\frac{1}{2 \log N}}
  \sum_{j = 1}^N |\optvar_j|
  + \sum_{j = N + 1}^\infty
  j^\beta |\optvar_j|
  \le \sqrt{e} \sup_{\optvar \in \optdomain}
  \<\ones_N, x\>
  + \sum_{j = N + 1}^\infty
  j^\beta |\optvar_j|.
\end{align*}
Define the index blocks $\mc{B}_k \defeq
\{i \in \N \mid e^k \le i < e^{k + 1}\}$, so that
\begin{align*}
  \sum_{j = N + 1}^\infty
  j^\beta |\optvar_j|
  & \le \sum_{k = \floor{\log N}}^\infty
  \sum_{j \in \mc{B}_k} j^\beta |\optvar_j|
  \le \sum_{k = \floor{\log N}}^\infty
  e^{\beta (k + 1)}
  \frac{1}{e^{2 k \gamma}} \sup_{\optvar \in \optdomain}
  \<\ones_N, \optvar\>
\end{align*}
because $\sum_{j \in \mc{B}_k} |\optvar_j|
\le e^{-2 k \gamma} \sup_{\optvar \in \optdomain}
\<\ones_N, \optvar\>$ by assumption.
Computing the infinite sums, we have
that because $\frac{1}{2 \log N} \le \frac{\gamma}{2}$,
\begin{align*}
  \sum_{k = \floor{\log N}}^\infty
  e^{\beta (k + 1)}
  \frac{1}{e^{2 k \gamma}}
  & = e^\beta \sum_{k = \floor{\log N}}^\infty
  e^{k(\beta - 2 \gamma)}
  \le
  e^\beta e^{-3 \gamma \floor{\log N} / 2}
  \sum_{k = 0}^\infty e^{-3 k \gamma / 2} \\
  & =   
  \frac{e^{\beta - 3 \gamma \floor{\log N} / 2}}{
    1 - e^{-3 \gamma / 2}}
  \le \left(\frac{N}{e}\right)^{-\frac{3}{2}\gamma}
  \frac{2 e^\beta}{3 \gamma},
\end{align*}
where we used that
$e^x \ge 1 + x$, or
$1 - e^x \le -x$.
Finally, noting that
$(N/e)^{-\kappa} \le \kappa$
iff $\log N \ge 1 + \frac{1}{\kappa} \log \frac{1}{\kappa}$,
we substitute $\kappa = \frac{3 \gamma}{2}$ to
obtain
$\sum_{j = N + 1}^\infty j^\beta |\optvar_j|
\le e^\beta
\sup_{\optvar \in \optdomain}
\<\ones_N, \optvar\>$.
We have shown that
\begin{equation*}
  \sum_{j = 1}^\infty j^\beta |\optvar_j|
  \le (\sqrt{e} + e^\beta) \sup_{\optvar \in \optdomain}
  \<\ones_N, \optvar\>
\end{equation*}
for $\beta = \frac{1}{2 \log N}$.
So long as $\beta \le \log(e - \sqrt{e})$ we have
$\sqrt{e} + e^\beta \le e$.



\section{Proof of Theorem~\ref{theorem:regret-with-quadratic-hull}}
\label{sec:proof-regret-with-quadratic-hull}

As in the proof of Theorem~\ref{th:regret-ub-lp}, let $A \succ 0$ be a
positive definite matrix, so that
the regret of (Euclidean) mirror descent with distance generating
function $h_A(\optvar) = \half \optvar^\top A \optvar$ is
\begin{equation*}
	\regret_{k,A}(\optvar) = \bigg\<\sum_{i \le k} g_i, \optvar\bigg\>
	+ \half \sum_{i \le k} \norm{g_i}_{A^{-1}}^2
	- \half \normbigg{\sum_{i \le k} g_i}_{A^{-1}}^2.
\end{equation*}
For vectors $u, v \in \gradomain$ to be chosen and $p \in (0, 1)$ to be
chosen as well, we set
\begin{enumerate}[(a)]
	\item $g_i = u$ for $p k / 2$ of the indices $i \in [k]$
	\item $g_i = -u$ for $p k / 2$ of the indices $i \in [k]$
	\item $g_i = v$ for $(1 - p) k$ of the indices $i \in [k]$.
\end{enumerate}
This then yields regret lower bound
\begin{equation}
	\label{eqn:regret-for-linf-intermediate}
	\regret_{k,A}(\optvar) \ge
	(1 - p) k v^\top \optvar
	- \frac{(1 - p)^2 k^2}{2} v^\top A^{-1} v
	+ \frac{p k}{2}
	u^\top A^{-1} u.
\end{equation}

We argue we may assume w.l.o.g.\ that $\limsup_k \opnorm{A(k)} / \sqrt{k}
< \infty$. Suppose to the contrary that along a subsequence, which
for simplicity we take to be the entire sequence, that
$\opnorm{A(k)} \gg \sqrt{k}$. Let $C < \infty$ be arbitrary,
take $k$ large enough that $\opnorm{A(k)} \ge C \sqrt{k}$,
and assume w.l.o.g.\ that $C \le \sqrt{k}$
(we can always take $k$ larger).
Let $\delta_0 > 0$ be such that $\ball{2}{0}{\delta_0} \subset \gradomain$,
which of course exists because $\gradomain$ is a convex body.
Let $w$ be the unit eigenvector
of $A = A(k)$ achieving $w^\top A w = \opnorm{A}$, and set
$v = \delta w$ for a $0 \le \delta \le \delta_0$ to be chosen,
so that $v = \delta w \in \gradomain$. Then
at such indices $k$ the lower bound~\eqref{eqn:regret-for-linf-intermediate}
implies
\begin{align*}
	\regret_{k,A(k)}(\optvar)
	&\ge (1 - p)k \delta w^\top \optvar
	- \frac{(1 - p)^2 k^2 \delta^2}{2 \opnorm{A(k)}}
	+ \frac{k p}{2} u^\top A^{-1} u
	\\
	&\ge \frac{k (1 - p) \delta}{2}
	\left[w^\top \optvar - \frac{(1 - p)\sqrt{k} \delta}{2 C}\right].
\end{align*}
Taking a supremum over $\optvar \in \optdomain$, which is a convex
body (so that it has interior), we have
$\sup_{\optvar \in \optdomain} w^\top \optvar = c(\optdomain) > 0$,
whence for large enough $k$ we have
\begin{align*}
	\sup_{\optvar \in \optdomain} \regret_{k,A(k)}(\optvar)
	&\ge \sup_{0 \le \delta \le \delta_0}
	k (1 - p) \delta \left[c(\optdomain) - \frac{(1 - p)
		\sqrt{k} \delta}{2C}
	\right] \\
	&\ge \frac{C (1 - p) \cdot \min\{c(\optdomain), 1\}}{2} \sqrt{k},
\end{align*}
where the second inequality sets $\delta = C \cdot \min\{c(\optdomain),
1\} / \sqrt{k}$, which satisfies $\delta \le \delta_0$ for $k$ large enough.
As $c(\optdomain) > 0$ and $C < \infty$ was otherwise
arbitrary, whenever $p < 1$ the preceding lower bound is stronger than
that the theorem claims.

Returning to the main thread, we may therefore assume that
$\sup_k \opnorm{A(k)} / \sqrt{k} \le C$ for some finite $C$. Returning
to the regret bound~\eqref{eqn:regret-for-linf-intermediate},
we optimize over $v$ and $u$ to obtain
\begin{equation*}
	\regret_{k,A}(\optvar)
	\ge 
	\sup_{u, v \in \gradomain}
	\left[k(1 - p) v^\top \optvar - \frac{k^2(1 - p)^2}{2}
	v^\top A^{-1} v + \frac{k p}{2} u^\top A^{-1} u \right].
\end{equation*}
Considering the supremum over $v$, we have for any $A \succ 0$ that
\begin{equation*}
	\argmax_v \left\{k(1 -p) v^\top \optvar - \frac{k^2 (1 - p)^2}{2}
	v^\top A^{-1} v \right\}
	= \frac{1}{k(1 - p)} A \optvar.
\end{equation*}
Because $\optdomain$ is bounded and $\opnorm{A} \le C / \sqrt{k}$,
for $p < 1$ and suitably large $k$ the $v$ achieving this supremum evidently
satisfies $\ltwo{v} = \ltwo{Ax} / (k(1 - p)) \le \delta_0$, so that
$v \in \gradomain$. Then for
any (fixed) $p \in (0, 1)$, for all large enough $k$ we obtain
\begin{equation}
	\label{eqn:regret-with-matrix-sum}
	\regret_{k,A}(\optvar) \ge \sup_{u \in \gradomain}
	\frac{1}{2} \left[k p u^\top A^{-1} u
	+ \optvar^\top A \optvar \right].
\end{equation}

We use a duality argument to lower bound the
quantity~\eqref{eqn:regret-with-matrix-sum}.  Let $\mc{P}(\optdomain)$
and $\mc{P}(\gradomain)$
denote the collections of probability measures on $\optdomain$ and
$\gradomain$, respectively. Then
\begin{align*}
	&\inf_{A \succeq 0}
	\sup_{\optvar \in \optdomain}
	\sup_{u \in \gradomain} \left\{k p u^\top A^{-1} u
	+ \optvar^\top A \optvar \right\} \\
	& \ge \inf_{A \succeq 0} \sup_{\nu \in \mc{P}(\optdomain)}
	\sup_{\mu \in \mc{P}(\gradomain)}
	\left[k p \<A^{-1}, \E_\mu[uu^\top]\> + \<A, \E_\nu[\optvar \optvar^\top]\>
	\right] \\
	& \ge \sup_{\nu \in \mc{P}(\optdomain)}
	\sup_{\mu \in \mc{P}(\gradomain)}
	\inf_{A \succeq 0} \left\{ k p \cdot
	\<A^{-1}, \E_\mu[uu^\top]\>
	+ \<A, \E_\nu[\optvar \optvar^\top]\> \right\}.
\end{align*}
For shorthand, fix $\optdomain_\mu = \E_\nu[xx^\top]$ and $\gradomain_\mu =
\E_\mu[uu^\top]$. Then taking derivatives with respect to $A$, we see that the
inner infimum is achieved whenever
\begin{equation}
	\label{eqn:solve-best-A-regret}
	-kp A^{-1} \gradomain_\mu A^{-1} + \optdomain_\nu = 0,
	~~ \mbox{i.e.} ~~
	A \optdomain_\nu A = kp \gradomain_\mu.
\end{equation}
For $X \succ 0$ and $G \succ 0$,
the matrix equation $A X A = G$ has solution
\begin{equation*}
	A = X^{-1/2} (X^{1/2} G X^{1/2})^{1/2} X^{-1/2},
\end{equation*}
so the solution~\eqref{eqn:solve-best-A-regret} satisfies
\begin{equation*}
	A = \sqrt{kp} \, \optdomain_\nu^{-1/2}
	\left(\optdomain_\nu^{1/2} \gradomain_\mu \optdomain_\nu^{1/2}\right)^{1/2}
	\optdomain_\nu^{-1/2}.
\end{equation*}

We must argue that (for an appropriate constant $C < \infty$) we
have $\opnorm{A} \le C \sqrt{k}$ for this choice  of $A$. But
for any finite $b < \infty$,
so long as $\nu$ satisfies $\E_\nu[\optvar \optvar^\top] \succeq b^{-1}
I_n$, we certainly have $\opnorm{A}
\le C \sqrt{k}$, where $C < \infty$ grows at most as $1 / b$ and depends
on the diameters of $\optdomain$ and $\gradomain$.
Substituting this value for $A$ then yields
\begin{align*}
	\half kp \cdot \<A^{-1}, \gradomain_\mu\>
	+ \<A, \optdomain_\nu\>
	= \sqrt{kp} \cdot
	\tr \left(\left(\optdomain_\nu^{1/2}
	\gradomain_\mu \optdomain_\nu^{1/2}\right)^{1/2}\right).
\end{align*}
Substituting into the regret lower
bound~\eqref{eqn:regret-with-matrix-sum}, we have for any $b < \infty$ and
$p \in (0, 1)$ that for all large enough $k$
\begin{align*}
	\lefteqn{\sup_{\optvar \in \optdomain}
		\regret_{k,A}(\optvar)} \\
	& \ge \sqrt{kp} \sup_{\nu \in \mc{P}(\optdomain)}
	\sup_{\mu \in \mc{P}(\gradomain)}
	\left\{ \tr\left(
	\left(\E_\nu[\optvar \optvar^\top]^{1/2}
	\E_\mu[gg^\top] \E_\nu[\optvar \optvar^\top]^{1/2}\right)^{1/2}\right)
	\mid \E_\nu[\optvar \optvar^\top] \succeq b^{-1} I_n
	\right\}.
\end{align*}  

Finally, we use the following lemma relating quadratic
hulls and measures.
\begin{lemma}
	\label{lemma:concave-to-qhull}
	Let $\optdomain$ and $\gradomain$ be orthosymmetric convex bodies and
	$\mc{P}(\optdomain)$ denote the collection of probability measures
	on $\optdomain$ (and similarly for $\gradomain$).
	Define
	\begin{equation*}
		R(\mu, \nu)
		\defeq 
		\tr\left(
		\left(\E_\nu[\optvar \optvar^\top]^{1/2}
		\E_\mu[gg^\top] \E_\nu[\optvar \optvar^\top]^{1/2}\right)^{1/2}\right).
	\end{equation*}
	Then
	\begin{align*}
		\sup_{\mu \in \mc{P}(\gradomain)}
		\sup_{\nu \in \mc{P}(\optdomain)}
		R(\mu, \nu)
		& = \sup_{\mu \in \mc{P}(\gradomain)}
		\sup_{\nu \in \mc{P}(\optdomain)}
		\sum_{j = 1}^n \sqrt{\E_\nu[\optvar_j^2]
			\E_\mu[g_j^2]}
		= \sup_{q \in \qhull(\optdomain)}
		\sup_{y \in \qhull(\gradomain)} \<q, y\>.
	\end{align*}
	Moreover, the suprema can be taken over symmetric measures.
\end{lemma}
\begin{proof}
	We prove the first equality first. The function
	$R(\mu, \nu)$ is jointly concave in $\mu$ and $\nu$, as
	we may write it as the infimum
	$R(\mu, \nu) = \half \inf_{A \succeq 0}
	\{\<A^{-1}, \E_\mu[gg^\top]\>
	+ \<A, \E_\nu[\optvar \optvar^\top]\>\}$ of linear functions
	of $\mu$ and $\nu$. Fix measures
	$\mu_+$ and $\nu_+$, and for
	a diagonal matrix of signs $S$, let
	$\mu_-$ and $\nu_-$ be the induced measures
	on $g \sim \mu$ and $\optvar \sim \nu$. Then
	note that
	\begin{equation*}
		R(\mu_-, \nu_-)
		= \half \inf_{A \succ 0}
		\tr\left(\<S A^{-1} S, \E_{\mu_+}[gg^\top]\>
		+ \<S A S, \E_{\nu_+}[\optvar \optvar^\top]\> \right)
		= R(\mu_+, \nu_+),
	\end{equation*}
	because $SA^{-1} S = (SA S)^{-1}$, as $S^2 = I$.
	(Joint) concavity then yields
	\begin{equation*}
		R\left(\half (\mu_+ + \mu_-),
		\half (\nu_+ + \nu_-)\right)
		\ge \half R(\mu_+, \nu_+)
		+ \half R(\mu_-, \nu_-)
		= R(\mu_+, \nu_+).
	\end{equation*}
	Then for any measure $\mu$, if we allow $\wb{\mu}$ to be the
	induced measure on $S \optvar$ for
	$\optvar \sim \mu$ and $S$ drawn uniformly from diagonal sign matrices,
	and similarly for $\nu$ and $\wb{\nu}$, we obtain
	\begin{equation*}
		R\left(\wb{\mu}, \wb{\nu}\right)
		\ge R(\mu, \nu).
	\end{equation*}
	Of course, $\E_{\wb{\mu}}[gg^\top] =
	\E_\mu[S gg^\top S]
	= \diag(\E_\mu[g_j^2]_{j = 1}^n)$ is diagonal, as is
	$\E_{\wb{\nu}}[\optvar \optvar^\top]$, and so
	\begin{equation*}
		\sup_{\mu,\nu} R(\mu, \nu)
		= \sup_{\mu, \nu}
		\sum_{j = 1}^n
		\sqrt{\sqrt{\E_\nu[\optvar_j^2]}
			\E_\mu[g_j^2] \sqrt{\E_\nu[\optvar_j^2]}}
		=
		\sup_{\mu, \nu}
		\sum_{j = 1}^n \sqrt{\E_\mu[g_j^2]}
		\sqrt{\E_\nu[\optvar_j^2]}.
	\end{equation*}
	
	For the second equality, recall that for
	any vector $q \in \qhull(\optdomain)$ with $q \succeq 0$, we
	may write $q = \sqrt{z}$ (applied elementwise), where
	$z$ is a convex combination of vectors of the form
	$[\optvar_j^2]_{j = 1}^n$, $\optvar \in \optdomain$.
	Let $\nu \in \mc{P}(\optdomain)$. Then
	for $\optvar^i \simiid \nu$, the strong law of large numbers
	guarantees that $\frac{1}{n} \sum_{i = 1}^n \optvar^i
	(\optvar^i)^\top \to \E_\nu[\optvar \optvar^\top]$ with probability
	1, and so for any measure $\nu \in \mc{P}(\optdomain)$
	and any $\epsilon > 0$,
	there
	exists a finite set of vectors $\{x^i\}_{i = 1}^m$
	with $|\frac{1}{m} \sum_{i = 1}^m (x^i_j)^2
	- \E_\nu[\optvar_j^2]| \le \epsilon$.
	The same is true for $\mu \in \mc{P}(\gradomain)$, and so
	a continuity argument gives the second equality.
\end{proof}

By a slight perturbation, we therefore
obtain that for any $\epsilon > 0$, we can choose
$b$ large enough that for all large $k$,
using the $R$ in Lemma~\ref{lemma:concave-to-qhull} we have
\begin{align*}
	\sup_{\optvar \in \optdomain}
	\regret_{k,A(k)}(\optvar)
	& \ge \sqrt{kp}
	\sup_{\mu \in \mc{P}(\gradomain), \nu \in \mc{P}(\optdomain)}
	\left\{R(\mu, \nu) \mid \E_\nu[\optvar  \optvar^\top]
	\succeq b^{-1} I_n\right\} \\
	& \ge (1 - \epsilon) \sqrt{k p}
	\sup_{q \in \qhull(\optdomain)}
	\sup_{y \in \qhull(\gradomain)} \<q, y\>.
\end{align*}
As $\epsilon > 0$ and $p < 1$ were arbitrary, this
completes the proof of the lower bound.

The upper bound on the regret follows immediately from
Corollary~\ref{cor:ub} once we recognize that any orthosymmetric compact
convex set $G$ defines a norm via $\gamma(g) = \inf\{t > 0 \mid t^{-1} g \in
G\}$ (see~\cite[Def.~V.1.2.4 and Ch.~V.3.2]{HiriartUrrutyLe93}).



\section{Proofs related to $\ell_1$-diameters and $n$-widths}
\label{sec:proofs-qc-gaps}

Here, we collect the lemmas necessary to prove
Corollary~\ref{corollary:polyhedra-stoch-opt-nonlinear}.  Recall that
$\ones_n \in \R^\N$ denotes the vector with $1$ in the first $n$ positions
and 0 elsewhere.

\begin{lemma}
  \label{lemma:l1-diameters-union-hull}
  For the set $\optdomain = \conv(C_0 \cup C_1)$
  that equation~\eqref{eqn:domain-from-union-hull}
  defines, we have
  \begin{equation*}
    \sup_{q \in \qhull(\optdomain)} \<\ones_n, q\>
    = \sqrt{\sum_{j = 1}^n a_j^{-2}}
    ~~ \mbox{and} ~~
    \sup_{\optvar \in \optdomain} \<\ones_n, \optvar\>
    = \max_{m \le n}
    \frac{\sum_{j = 1}^m b_j}{\sqrt{\sum_{j = 1}^m a_j^2 b_j^2}}
    = \max_{m \le n} \frac{\<\ones_m, b\>}{Z(m)}.
  \end{equation*}
\end{lemma}
\begin{proof}
  Let $Q = \qhull(\optdomain)
  = \{q \mid \sum_{j = 1}^\infty a_j^2 q_j^2 \le 1\}$.
  By Cauchy-Schwarz, the suprema of
  $\<\ones_n, q\>$ over $q \in Q$ satisfy
  $q_j = \frac{\lambda}{a_j^2}$ where $\lambda > 0$ normalizes
  $q$ so that $\sum_{j = 1}^n a_j^2 q_j^2 = 1$, that is,
  $\lambda = (\sum_{j = 1}^n a_j^{-2})^{-1/2}$.
  For the second equality,
  note that $\<\ones_n, \optvar\>$ is linear in $\optvar$, and so
  the supremum is achieved at one of the vertices of $C_0$ or $C_1$.
  Thus
  \begin{equation*}
    \sup_{\optvar \in \optdomain}\<\ones_n, \optvar\>
    = \max_{\optvar \in C_0} \<\ones_n, \optvar\>
    \vee \max_{\optvar \in C_1} \<\ones_n, \optvar\>
    = \max_{j \le n} \frac{1}{a_j}
    \vee \max_{m \le n} \frac{1}{Z(m)} \<\ones_m, b\>.
  \end{equation*}
  Substitute $1 = \max_{j \le n} \frac{1}{a_j}$, as $a_1 = 1$ and $a_j$
  are nondecreasing, then recognize that
  $b_1 / \sqrt{b_1^2} = 1$ to obtain the lemma.
\end{proof}

We now give rough bounds on the widths of the set $X$ and its hull.
\begin{lemma}
  \label{lemma:widths-of-polyhedron}
  For the set $\optdomain = \conv(C_0 \cup C_1)$, we have
  \begin{equation*}
    \width^2(n) = \sup_{m \ge n} \frac{m - n}{\sum_{j = 1}^m a_j^2}
  \end{equation*}
  and
  \begin{equation*}
    \width^2(n) \ge \nlwidth^2(n) \ge \sup_{m \ge n} \frac{1}{Z(m)^2}
    \sum_{j = n + 1}^m b_j^2
    = \sup_{m \ge n} \frac{\sum_{j = n + 1}^m b_j^2}{\sum_{j = 1}^m
      a_j^2 b_j^2}.
  \end{equation*}
\end{lemma}
\begin{proof}
  For the linear width, we recognize that $Q = \{q \mid \sum_{j = 1}^\infty
  a_j^2 q_j^2 \le 1\}$ is elliptical, so using the
  characterization~\eqref{eqn:width-of-l1-bodies} of $\width^2(n)$ gives the
  first claim of the lemma.  For the second
  we can take as a lower bound the nonlinear width of the set
  $C_1$, so that
  \begin{equation*}
    \nlwidth^2(n)
    \ge \sup_{x \in C_1}
    \bigg\{\sum_{j > n} x_{(j)}^2\bigg\}
    = \sup_{m \ge n}
    \bigg\{ \frac{1}{Z(m)^2} \sum_{j = n + 1}^m b_j^2\bigg\}
  \end{equation*}
  as desired.
\end{proof}

Finally, we take the scalars $a_j$, $b_j$ as in the statement of
Corollary~\ref{corollary:polyhedra-stoch-opt-nonlinear}.
Set
\begin{align*}
  a_j = j^{\alpha/2} ~~ \mbox{and} ~~ b_j^2 = 2^j,
\end{align*}
where $0 < \alpha < 1$. Then direct
calculations yield the
asymptotics that for $m \ge n$,
\begin{align}
    \sum_{j=1}^m a_j^2 & = \sum_{j=1}^m j^\alpha
    \in \left[\int_0^m t^\alpha dt,
      \int_1^{m + 1} t^\alpha dt \right]
    \asymp m^{\alpha+1}, \nonumber
    \\
    2^m m^\alpha
    = a_m^2 b_m^2
    \le \sum_{j=1}^m a_j^2 b_j^2
    & = 2^m m^\alpha \sum_{j = 1}^m \left(\frac{j}{m}\right)^\alpha
    2^{j - m}
    \le 2^m m^\alpha \sum_{j = 0}^{m - 1} 2^{-j}
    \le 2^{m + 1} m^\alpha \nonumber \\ 
    2^m = b_m^2
    \le \sum_{j=n+1}^m b_j^2 &
    \leq \sum_{j=1}^m 2^j \le 2^{m + 1}.
  \label{eqn:growth-beta-a}
\end{align}
The first equation in~\eqref{eqn:growth-beta-a}
implies that
\begin{align*}
  \width^2(n) =
  \sup_{m \ge n} \frac{m - n}{\sum_{j = 1}^m a_j^2}
  \asymp \sup_{m \geq n} \frac{m-n}{m^{1 + \alpha}} \asymp n^{-\alpha},
\end{align*}
while the last two equations lower bound the nonlinear width (via
Lemma~\ref{lemma:widths-of-polyhedron}) by
\begin{align*}
  \nlwidth^2(n) \ge
  \sup_{m \ge n}
  \frac{\sum_{j = n + 1}^m b_j^2}{\sum_{j = 1}^m a_j^2 b_j^2}
  \ge \sup_{m \ge n} \frac{2^m}{2^m m^\alpha} = n^{-\alpha},
\end{align*}
and so we have $\width^2(n) \lesssim \nlwidth^2(n) \le \width^2(n)$ for all
$n$.

To prove Corollary~\ref{corollary:polyhedra-stoch-opt-nonlinear},
it remains to compute $\ell_1$ diameter ratio. Applying
Lemma~\ref{lemma:l1-diameters-union-hull},
for $\alpha < 1$ we have
\begin{equation*}
  \sup_{q \in \qhull(\optdomain)}
  \<\ones_n, q\> =
  \sqrt{\sum_{j = 1}^n a_j^{-2}} = \sqrt{\sum_{j =
    1}^n j^{-\alpha}} \asymp n^\frac{1 - \alpha}{2}.
\end{equation*}
On the other hand, because $\sum_{j=1}^m b_j \asymp 2^{m/2}$, the
bounds~\eqref{eqn:growth-beta-a} and Lemma~\ref{lemma:l1-diameters-union-hull}
give
\begin{align*}
  \sup_{\optvar \in \optdomain}
  \<\ones_n, \optvar\>
  = \max_{m \le n}
  \frac{\sum_{j = 1}^m b_j}{\sqrt{\sum_{j = 1}^m
      a_j^2 b_j^2}} \lesssim
  \max_{m \le n}
  \frac{2^{m/2}}{m^{\alpha/2} 2^{m/2}}
  = 1.
\end{align*}

\ifdefined\moorsubmission
\end{APPENDICES}
\fi

\end{document}